\documentclass[reqno]{amsart}

%%!TEX TS-program = latex

%opening
\title[Eigenvalues and entropy]{Eigenvalues and entropy of a Hitchin representation}
\author[R. Potrie]{Rafael Potrie}

\author[A. Sambarino]{Andr\'es Sambarino}

\thanks{The research leading to these results has received funding from 
the European Research Council under the {\em European Community}'s
 seventh Framework Programme (FP7/2007-2013)/ERC {\em grant agreement} ${\rm n^o}$ FP7-246918. R. P. was partially supported by FCE-2011-6749, CSIC grupo 618 and the Palis-Balzan project. A. S. was partially supported by CSIC group 618 and FCE-2011-6749.}
%\date{Draft March 12 2013}

\usepackage{marvosym}
\usepackage{psfrag}
\usepackage[english]{babel}
\usepackage{mathrsfs}
\usepackage{verbatim}
\usepackage[ansinew]{inputenc}
\usepackage{amsmath,amsthm,amssymb,amscd}
\usepackage[all,cmtip]{xy}
\usepackage{xspace}
\usepackage[dvips]{color}
\usepackage{epsfig}
\usepackage{pb-diagram}
\usepackage{amsfonts}
\usepackage{graphicx}
\usepackage[mathcal]{eucal}
\usepackage{hyperref}

\pagestyle{headings}

%\pagestyle{fancy}
% with this we ensure that the chapter and section
% headings are in lowercase.
%\renewcommand{\chaptermark}[1]{\markboth{#1}{}}
%\renewcommand{\sectionmark}[1]{\markright{#1}}
%\renewcommand{\subsectionmark}[1]{\markright{#1}{}}
%\fancyhf{} % delete current setting for header and footer
%\fancyhead[LE,RO]{\bfseries\thepage}
%\fancyhead[LO]{\bfseries\rightmark}
%\fancyhead[RE]{\bfseries\leftmark}
%\renewcommand{\headrulewidth}{0.5pt}
%\renewcommand{\footrulewidth}{0pt}
%\addtolength{\headheight}{0.5pt} % make space for the rule
%\fancypagestyle{plain}{%
%\fancyhead{} % get rid of headers on plain pages
%\renewcommand{\headrulewidth}{0pt} % and the line
%}
\usepackage{perpage}
\MakePerPage{footnote}

\newcommand{\R}{\mathbb{R}}

\renewcommand{\H}{\mathbb{H}}
\newcommand{\N}{\mathbb{N}}

\renewcommand{\P}{\mathbb{P}}

\renewcommand{\/}{\backslash}
\renewcommand{\k}{\kappa}

\newcommand{\Om}{\Omega}
\newcommand{\eps}{\varepsilon}
\newcommand{\tex}{\textrm}
\newcommand{\vacio}{\emptyset}

\newcommand{\G}{\sf{\Gamma}}
\renewcommand{\l}{p}
\newcommand{\<}{\left<}
\renewcommand{\>}{\right>}
\newcommand{\E}{\Sigma}

\newcommand{\g}{\gamma}
\newcommand{\z}{\zeta}
\newcommand{\w}{\widetilde}
\newcommand{\bord}{\partial}
\newcommand{\vo}[1]{\overline{#1}}

\newcommand{\cone}{\scr L}

\newcommand{\grupo}{\Delta}
\newcommand{\om}{\omega}
\newcommand{\posgen}{\scr F^{(2)}}
\renewcommand{\t}{\theta}

\newcommand{\UG}{\sf{U}\G}

\newcommand{\algo}[1]{c(#1)}
\renewcommand{\1}{\mathbf 1}
\renewcommand{\L}{\Lambda}

\newcommand{\conodual}{{\cone_\rho^\t}^*}
\newcommand{\D}{\cal{D}}

\newcommand{\scr}{\mathscr}
\renewcommand{\sf}[1]{{\mathsf{#1}}}
\renewcommand{\rm}{\mathbf}
\newcommand{\cal}{\mathcal}
\renewcommand{\frak}{\mathfrak}

\DeclareMathOperator{\ii}{i}
\DeclareMathOperator{\Lim}{L}

\DeclareMathOperator{\inte}{int}

\DeclareMathOperator{\SL}{SL}
\DeclareMathOperator{\PSL}{PSL}
\DeclareMathOperator{\Psp}{PSp}
\DeclareMathOperator{\GL}{GL}

\DeclareMathOperator{\grassman}{Gr}
\DeclareMathOperator{\clase}{C}

\DeclareMathOperator{\aire}{MinArea}

\DeclareMathOperator{\isom}{Isom}

\DeclareMathOperator{\Ge}{G_2}
\DeclareMathOperator{\PGL}{PGL}

\DeclareMathOperator{\PSO}{PSO}

\DeclareMathOperator{\holder}{Holder}
\DeclareMathOperator{\Livsic}{Livsic}
\DeclareMathOperator{\homa}{HA}
\DeclareMathOperator{\Hitchin}{Hitchin}

\DeclareMathOperator{\rk}{rank}

\DeclareMathOperator{\CAT}{CAT}

\DeclareMathOperator{\rest}{rt}

\newtheorem*{teo1}{Theorem A}
\newtheorem*{teoB}{Theorem B}
\newtheorem*{teoC}{Theorem C}
\newtheorem*{teoD}{Theorem D}

\newtheorem{teo}{Theorem}[section]
\newtheorem{cor}[teo]{Corollary}

\newtheorem{lema}[teo]{Lemma}

\newtheorem{prop}[teo]{Proposition}

\theoremstyle{definition}

\newtheorem{defi}[teo]{Definition}

\newtheorem{obs}[teo]{Remark}
\newtheorem{ex}[teo]{Example}

\theoremstyle{remark}

\setcounter{tocdepth}{1}

\hypersetup{
    colorlinks=true,       % false: boxed links; true: colored links
    linkcolor=blue,          % color of internal links
urlcolor=blue,
citecolor=blue
}

\begin{document}

\begin{abstract} We show that the critical exponent of a representation $\rho$ in the Hitchin component of $\PSL(d,\R)$ is bounded above, the least upper bound being attained only in the Fuchsian locus. This provides a rigid inequality for the area of a minimal surface on $\rho\/X,$ where $X$ is the symmetric space of $\PSL(d,\R).$ The proof relies in a construction useful to prove a regularity statement: if the Frenet equivariant curve of $\rho$ is smooth, then $\rho$ is Fuchsian.
\end{abstract}

\maketitle

\tableofcontents

% !TEX root = entropiaHitchin.tex

\section{Introduction}

Let $\E$ be a closed orientable surface of genus $\geq2.$ A representation $\pi_1\E\to\PSL(d,\R)$ is \emph{Fuchsian} if it factors as $$\pi_1\E\to\PSL(2,\R)\to\PSL(d,\R),$$ where the first arrow is a choice of a hyperbolic metric on $\E,$ and the second arrow is the (unique up to conjugation) irreducible linear action of $\SL(2,\R)$ on $\R^d.$\footnote[2]{This is standard, see Guichard \cite{guichard} for an explicit construction.}

A \emph{Hitchin component} of $\PSL(d,\R)$ is a connected
component of $$\frak
X(\pi_1\E,\PSL(d,\R))=\hom(\pi_1\E,\PSL(d,\R))/\PSL(d,\R)$$ that
contains a Fuchsian representation. Hitchin \cite{hitchin} proved
that there are either one, or two Hitchin components (according to
$d$ odd or even respectively), and that each of these components
is diffeomorphic to an open
$|\chi(\E)|\cdot\dim\PSL(d,\R)$-dimensional Euclidean ball. When
$d=2$ these two components correspond to the Teichm\"uller space
of $\E$ with a fixed orientation. A Hitchin component appears then
as a higher rank generalization of Teichm\"uller space. Denote by
$\Hitchin(\E,d)$ this (these) component(s).

The analogy with Teichm\"uller space is carried on. Labourie \cite{labourie} shows that a representation in $\Hitchin(\E,d)$ (from now on a \emph{Hitchin representation}) is discrete, irreducible and faithful, and consists of purely loxodromic elements. Guichard-Wienhard \cite{olivieranna} proved that Hitchin components are deformation spaces of geometric structures on closed manifolds. Bridgeman-Canary-Labourie-S. \cite{presion} provide a Weil-Petersson-type Riemannian metric on $\Hitchin(\E,d),$ invariant under the mapping class group of $\E.$

Denote by $X$ the symmetric space of $\PSL(d,\R),$ and by $d_X$ a distance on $X$ induced by a $\PSL(d,\R)$-invariant Riemannian
metric on $X.$ If $\grupo$ is a discrete subgroup of $\PSL(d,\R),$ the \emph{critical exponent} of $\grupo$ is defined by $$h_X(\grupo)=\lim_{s\to\infty}\frac{\log\#\{g\in\grupo:d_X(o,g\cdot o)\leq s\}}s,$$ for some (any) $o\in X.$

Introduced by Margulis \cite{margulistesis} in the negatively curved setting, this invariant associated to a discrete group of isometries has been object of numerous deep results. Recall for example the Patterson-Sullivan theory used for precise orbital counting, or its rigid structure due to Besson-Courtois-Gallot \cite{BCG}, Bowen \cite{bowen-quasicircles} and Bourdon \cite{bourdon}, just to name a few.

This paper is concerned on the rigidity problem for Hitchin
representations (the orbital counting problem has already been
treated in \cite{orbitalcounting}). Normalize $d_X$ so that the
totally geodesic embedding of $\H^2$ in $X,$ induced by the
morphism $\PSL(2,\R)\to\PSL(d,\R)$ has curvature $-1.$ The main
result of this work is the following theorem.

\begin{teo1} For all $\rho\in\Hitchin(\E,d)$ one has $h_X(\rho(\pi_1\E))\leq 1$ and equality only holds if $\rho$ is Fuchsian.
\end{teo1}

Theorem A confirms the current philosophy that deformations in higher rank spaces should decrease the critical exponent, as opposed to deformations on rank 1 spaces (i.e. pinched negative curvature) where the critical exponent increases (see Bowen's fundamental paper \cite{bowen-quasicircles} on quasi-Fuchsian representations). It would be interesting to find a global explanation for these two different phenomena, today understood independently: in rank 1 the critical exponent is the Hausdorff dimension of the limit set, bounded below by the topological dimension; in higher rank (as we shall see below) it is the possibility of growing in different directions that forces $h_X$ to decrease.

This philosophy probably originated in Bishop--Steger's work \cite{bishop}, where they show that if $\rho,\eta\in\Hitchin(\E,2)$ then $$h^{(1,1)}(\rho, \eta)=\lim_{s\to\infty}\frac{\log\#\{[\g]\in[\pi_1\E]:|\rho\g|+|\eta\g|\leq s\}}s\leq 1/2,$$ where $|g|$ is the translation distance of $g$ in $\H^2$ and $[\pi_1\E]$ denotes the set of conjugacy classes of $\pi_1\E.$ Moreover, equality implies $\rho=\eta.$ As noticed by Burger \cite{burger}, this is a rank-2 problem, associated to the product representation $\rho\times\eta:\pi_1\E\to\PSL(2,\R)\times\PSL(2,\R).$

An analogous result holds for \emph{Benoist representations}\footnote[2]{These are also called \emph{divisible convex sets with strictly convex boundary,} or \emph{strictly convex projective structures on closed manifolds.}}. These are homomorphisms $\rho:\G\to\PGL(n+1,\R)$ where $\G$ is a word-hyperbolic group, such that $\rho(\G)$ preserves an open convex set $\Om\subset\P(\R^{n+1})$ properly contained on an affine chart, and such that the quotient $\rho(\G)\/\Om$ is compact. The Hilbert metric on $\Om$ induces a $\rho(\G)$-invariant Finsler metric on $\Om.$ Crampon \cite{crampon} proved that the topological entropy of the geodesic flow on ${\sf{T}}^1\rho(\G)\/\Om$ associated to this metric, is bounded above by $n-1$ and equality only holds if $\Om$ is an ellipsoid. We provide a new proof of Crampon's result in Section \ref{section:convex}.

It is consequence of Choi-Goldman's work \cite{choigoldman} that the space of Benoist representations of $\pi_1\E$ coincides with $\Hitchin(\E,3).$

Before explaining the main ideas of the proof let us remark that, as explained by Labourie \cite[Section 1.4]{labourie-cyclicsurfaces}, the inequality in Theorem A implies a (rigid) inequality concerning the area of a minimal surface on $\rho(\pi_1\E)\/X.$ Recall from Labourie \cite{labourie-crossratioENS} that the \emph{minimal area} of $\rho$ is defined by $$\aire(\rho)=\inf\{e_\rho(J):J\in\Hitchin(\E,2)\},$$ where $e_\rho(J)$ is the energy of the unique harmonic $\rho$-equivariant map from $\E$ e\-quipped with $J$ to $\rho(\pi_1\E)\/X.$ It follows from Hitchin's construction that such a harmonic map is an immersion (see Sanders \cite{andy} for details). Standard computations imply that the metric induced on this immersed surface is necessarily negatively curved and hence its topological entropy is bounded above by 
$h_X(\rho(\pi_1\E)).$ Applying a theorem of Katok \cite[Theorem B]{katok1982} one has $$\aire(\rho)\geq\frac{-2\pi\chi(\E)}{h^2_X(\rho(\pi_1\E))},$$ where $\chi(\E)$ is the E\"uler characteristic of $\E.$ Consequently, Theorem A implies the following:

\begin{cor} Let $\rho\in\Hitchin(\E,d)$ then $$\aire(\rho)\geq-2\pi\chi(\E)$$ and equality only holds if $\rho$ is Fuchsian.
\end{cor}

This is a theorem of Labourie \cite[Theorem 1.4.1]{labourie-cyclicsurfaces} when the Zariski closure of $\rho$ has rank 2, proved using Higgs bundles techniques.

Finally, let us note that Theorem A is still open for the Hitchin components of the real split simple groups $\PSO(n,n)$ ($n\geq4$) and the exceptional real split Lie groups (except $\Ge$). This is due to the fact that the Frenet property of Labourie's equivariant flag curve (see below) is only known to hold for $\Hitchin(\E,d)$ (and hence for the groups $\Psp(2k,\R),$ $\PSO(k,k+1)$ and $\Ge,$ since their respective Hitchin components are canonically embeded in $\Hitchin(\E,d)$ for $d=2k,$ $2k+1$ and $7$ respectively).

\subsection{Proof of Theorem A: The asymptotic location of eigenvalues} The general method is not specific to the Hitchin component. Indeed, our method applied in different situations gives an improvement of Crampon's result and a generalization of Bishop--Steger's theorem to arbitrary products such as $$\Hitchin(\E,d_1)\times \cdots\times\Hitchin(\E,d_k),$$ replacing $1/2$ with a proper upper bound. We will explain here how the idea works in the Hitchin component, and leave to Section \ref{section:convex} the case of Benoist's representations.

The first step of the proof of Theorem A reposes on some previous results of Quint \cite{quint2} and S. \cite{exponential} which relate the critical exponent with the (asymptotic) location of the eigenvalues of a Hitchin representation.

Let $\frak a=\{a\in\R^d:a_1+\cdots+a_d=0\}$ be a Cartan subalgebra of $\frak{sl}(d,\R)$ and denote by $\eps_i(a)=a_i.$ Let $$\frak a^+=\{a\in\frak a: a_1\geq\cdots\geq a_d\}$$ be a closed Weyl chamber and $\Pi=\{\sigma_i=\eps_i-\eps_{i+1}\in\frak a^*:i\in\{1,\ldots,d-1\}\}$ the set of simple roots associated to the choice of $\frak a^+.$ Denote by $\lambda:\PSL(d,\R)\to\frak a^+$ the \emph{Jordan projection}: $$\lambda(g)=(\lambda_1(g),\ldots,\lambda_d(g)),$$ consisting on the $\log$ of the modulus of the eigenvalues of $g$ (possibly with repetition) and in decreasing order.

For $\rho\in\Hitchin(\E,d)$ denote by $\cone_\rho$ the closed cone of $\frak a^+$ generated by $\{\lambda(\rho\g):\g\in\pi_1\E\}.$ This cone contains all possible directions where $\lambda(\rho(\pi_\1\E))$ is. A finer invariant is to understand \emph{how many} eigenvalues of $\rho$ are on a given direction inside $\cone_\rho.$ Denote by $\cone_\rho^*=\{\varphi\in\frak a^*:\varphi|\cone_\rho\geq0\}$ the \emph{dual cone} of $\cone_\rho.$ For $\varphi\in \cone_\rho^*$ define its \emph{entropy} by $$h^\varphi_\rho=\lim_{s\to\infty}\frac{\log\#\{[\g]\in[\pi_1\E]:\varphi(\lambda(\rho\g))\leq s\}}s.$$

A linear form $\varphi$ belongs to the interior of $\cone_\rho^*$ if and only if $h^\varphi_\rho$ is finite and positive (Lemma \ref{lema:interior}). The main object we are interested in is the set $$\D_\rho=\{\varphi:h^\varphi_\rho\in(0,1]\}.$$ Proposition \ref{prop:convexoreps} states that $\D_\rho$ is a convex subset of $\frak a^*,$ and the formula $h^{t\varphi}_\rho=h^\varphi_\rho/t$ implies that if $\varphi\in\D_\rho$ then $t\varphi\in\D_\rho$ for all $t\geq 1.$ Moreover, its boundary $\bord\D_\rho=\{\varphi:h^\varphi_\rho=1\}$ is a codimension 1 closed analytic submanifold of $\frak a^*.$ The shape of $\D_\rho$ will be crucial in the sequel.

Recall that $d_X$ is a distance on $X$ induced by a
$\PSL(d,\R)$-invariant Riemannian metric on $X.$ Denote by $\|\
\|_{\frak a}$ the Euclidean norm on $\frak a$ (invariant under the
Weyl group) induced by $d_X,$ and by $\|\ \|_{\frak a^*}$ the
induced norm on $\frak a^*.$ One has the following
result\footnote[2]{Proposition \ref{prop:JF} actually holds on a much more general setting, see subsection \ref{subsection:hist}.}.

\begin{prop}[{Quint \cite[Corollary 3.1.4]{quint2} + \cite[Corollary 4.4]{exponential}}]\label{prop:JF} Let $\rho\in\Hitchin(\E,d)$ then $$h_X(\rho(\pi_1\E))=\min\{\|\varphi\|_{\frak
a^*}:\varphi\in\D_\rho\}.$$
\end{prop}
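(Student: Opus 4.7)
The plan is to factor the proposition through two ingredients: an Anosov-type comparison that rewrites $h_X$ as an exponential growth rate of Jordan projections, and a convex-duality identification of the two sides of the claimed equality with dual extrema of Quint's growth indicator.

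\emph{Step 1 (from $d_X$ to $\lambda$).} Once the invariant metric is normalized so that $d_X(o,g\cdot o)=\|\mu(g)\|_{\frak a}$ for the Cartan projection $\mu$, the fact that Hitchin representations are projective Anosov yields a uniform estimate $\|\mu(\rho\g)-\lambda(\rho\g)\|_{\frak a}=O(1)$ along (primitive) conjugacy classes. Hence
\begin{equation*}
h_X(\rho(\pi_1\E))=\lim_{s\to\infty}\frac{1}{s}\log\#\{[\g]\in[\pi_1\E]:\|\lambda(\rho\g)\|_{\frak a}\leq s\}.
\end{equation*}

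\emph{Step 2 (growth indicator).} I would then introduce Quint's growth indicator $\psi_\rho:\cone_\rho\to[0,+\infty)$, homogeneous of degree one, concave and upper semicontinuous, whose value $\psi_\rho(v)$ records the exponential rate of Jordan projections falling in narrowing cones around $v$. The counting estimates of \cite{quint2} and \cite{exponential} then furnish the two key Legendre-type identities: for $\varphi$ in the interior of $\cone_\rho^*$,
\begin{equation*}
h^\varphi_\rho=\sup_{v\in\cone_\rho,\ v\neq 0}\frac{\psi_\rho(v)}{\varphi(v)},
\end{equation*}
so that $\varphi\in\D_\rho$ if and only if $\psi_\rho\leq\varphi$ on $\cone_\rho$; and dually, packaging Step 1 in the same vocabulary,
\begin{equation*}
h_X(\rho(\pi_1\E))=\sup_{v\in\cone_\rho,\ \|v\|_{\frak a}=1}\psi_\rho(v).
\end{equation*}

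\emph{Step 3 (convex duality).} Since $\|\cdot\|_{\frak a^*}$ is the dual norm to $\|\cdot\|_{\frak a}$, a soft convex-analytic computation (or a direct Hahn--Banach argument using concavity and positive homogeneity of $\psi_\rho$) identifies the distance from the origin to the set $\D_\rho$ of linear forms majorizing $\psi_\rho$ with the maximal slope of $\psi_\rho$:
\begin{equation*}
\min_{\varphi\in\D_\rho}\|\varphi\|_{\frak a^*}=\sup_{v\in\cone_\rho,\ \|v\|_{\frak a}=1}\psi_\rho(v).
\end{equation*}
Chaining the three displayed equalities yields the proposition.

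The genuine obstacle lies in Step 2: the exact equality between the entropies $h^\varphi_\rho$, $h_X$ and suprema of $\psi_\rho$ requires sharp upper and lower bounds in exponential orbital counting for Anosov representations, precisely the content of \cite{quint2} and its refinement \cite{exponential}. Step 1 is by now standard in the theory of Anosov representations, and Step 3 is pure finite-dimensional convex analysis once the growth indicator is available.
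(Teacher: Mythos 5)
The paper offers no proof of this Proposition; it is imported wholesale from Quint \cite{quint2} and from \cite{exponential}, and your reconstruction correctly unpacks those two citations into the intended route: Quint's growth indicator plus convex duality. What the paper's historical remarks flag as the delicate point --- the passage from Quint's Cartan-projection definition to the Jordan-projection entropies $h^\varphi_\rho$ --- is exactly where your Step~1 sits, and is the content of \cite[Corollary~4.4]{exponential}.

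One imprecision in Step~1 is worth naming. The estimate $\|\mu(\rho\g)-\lambda(\rho\g)\|_{\frak a}=O(1)$ only makes sense after choosing a suitable representative of each conjugacy class (the Cartan projection is not a class function), and even then it does not by itself give the displayed equality between the group-element count $h_X$ and the conjugacy-class count by $\|\lambda\|$: you must additionally argue that the number of elements $g$ in a fixed primitive conjugacy class with $\|\mu(\rho g)\|_{\frak a}\leq s$ grows only polynomially in $s$, so that the exponential rates agree. Both facts hold for Anosov representations, but they are precisely what is absorbed into the cited Corollary~4.4 of \cite{exponential}; stating the $O(1)$ bound alone and declaring the equality is too quick. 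Step~3 is fine: for a concave, $1$-homogeneous, upper semicontinuous $\psi_\rho$ with compact projectivized support cone, the duality between $\min\{\|\varphi\|_{\frak a^*}:\varphi\geq\psi_\rho\}$ and $\max\{\psi_\rho(v):\|v\|_{\frak a}=1\}$ is standard, and both extrema are attained. Finally, Quint's statements are for Zariski-dense subgroups, so strictly one applies them inside the Zariski closure $G_\rho$ and then uses the orthogonal identification of $\frak a_{G_\rho}^*$ as a subspace of $\frak a^*$, exactly as the paper does in the discussion following this Proposition.
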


\begin{ex}\label{exFuchs}The irreducible linear action $\tau_d:\PSL(2,\R)\to\PSL(d,\R)$ is given by the canonical action of $\PSL(2,\R)$ on the $(d-1)$-symmetric power ${\sf{S}}^{d-1}(\R^2)$ of $\R^2.$ If $g\in\PSL(2,\R)$ one has $\lambda_{\PSL(2,\R)}(g)=(|g|/2,-|g|/2),$ where $|g|$ denotes the translation distance of $g,$ and hence $$\lambda(\tau_dg)=\frac{|g|}2(d-1,d-3,\cdots,3-d,1-d).$$ Thus, for all $\sigma\in\Pi$ one has $\sigma(\lambda(\tau_dg))=|g|.$ Moreover if $\varphi$ belongs to the affine hyperplane generated by $\Pi,$ $$V_\Pi=\{\sum_{\sigma\in\Pi} t_\sigma\sigma:\sum t_\sigma=1\},$$ then $\varphi(\lambda(\tau_dg))=|g|.$ Consequently, if $\rho_0\in\Hitchin(\E,d)$ is Fuchsian then $\bord\D_{\rho_0}=V_\Pi.$ Since $d_X$ is normalized such that the totally geodesic embedding of $\mathbb{H}^2$ in $X$ to have curvature $-1,$ the Fuchsian representation $\rho_0$ has critical exponent equal to $1.$ One concludes, using Proposition \ref{prop:JF}, that $$\min\{\|\varphi\|_{\frak a^*}:\varphi\in V_\Pi\}=1$$ and this minimum is realized in the dual space of the Cartan algebra $$\{(d-1,d-3,\ldots,3-d,1-d)t:t\in\R\}$$ of $\tau_d(\PSL(2,\R)).$
\end{ex}

The proof of Theorem A consists in a deeper understanding of the set $\D_\rho$ for a given $\rho\in\Hitchin(\E,d),$ and its relative position with respect to $V_\Pi.$

Denote by $G=G_\rho$ the Zariski closure of $\rho(\pi_1\E). $ The group $G$ is necessarily semisimple\footnote[4]{It is reductive, since it acts irreducibly on $\R^d$ (Labourie \cite[Lemma 10.1]{labourie}) and has no center, since moreover $\forall \g\in\pi_1\E,$ $\rho(\g)$ is proximal (see Benoist \cite{convexes3}).}. Choose a Cartan subalgebra $\frak a_G\subset \frak a$ and a Weyl chamber $\frak a_G^+\subset\frak a^+. $ Consider the restriction map $\rest:\frak a^*\to\frak a_G^*,$ defined by $\rest(\varphi)=\varphi|\frak a_G.$ Observe that, since the vector space spanned by $\{\lambda(\rho\g):\g\in\pi_1\E\}$ is $\frak a_G,$ the entropy of a given linear form $\varphi,$ is the entropy of $\rest(\varphi).$

Remark \ref{obs:non-arithmetic} and Proposition \ref{prop:convexoreps} below imply that $\rest(\D_\rho)$ is strictly convex. Since $\|\ \|_{\frak a}$ is Euclidean one can (and will) identify the space $\frak a_G^*$ with a subspace of $\frak a^*.$ Namely, denote by $p_G:\frak a\to\frak a_G$ the orthogonal projection, then $$\frak a^*_G=\{\varphi\in\frak a^*: \varphi=\varphi\circ p_G.\}.$$

The set $\D_\rho$ is hence a convex set, whose intersection with $\frak a^*_G$ is strictly convex (see figure \ref{figure-DrhoinFraka}).

\begin{figure}[ht]\begin{center}
\input{drho2.pstex_t}
\caption{\small{The set $\D_\rho$ when $\frak a_G^\ast$ is a
strict subspace of $\frak a^\ast$.}}\label{figure-DrhoinFraka}
\end{center}\end{figure}

The second important step in the proof of Theorem A is the following theorem, its statement arose from an insightful discussion between the second author with Bertrand Deroin and Nicolas Tholozan.

\begin{teoB} For every $\rho\in\Hitchin(\E,d)$ and $\sigma\in\Pi$ one has $h^\sigma_\rho=1.$
\end{teoB}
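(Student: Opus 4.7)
The plan is to realize $h^{\sigma_i}_\rho$ as the topological entropy of an Anosov-type flow attached to the $i$-th exterior power of $\rho$, and to pin down this entropy to the value $1$ using the $C^1$-regularity of the Hitchin limit curve at level $i$.

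For each $i\in\{1,\ldots,d-1\}$, Labourie's theorem implies that the exterior power $\Lambda^i\rho:\pi_1\E\to\PSL(\Lambda^i\R^d)$ is projective Anosov, with equivariant limit curve in $\P(\Lambda^i\R^d)$ given by the Pl\"ucker image of the $i$-th Frenet level $\xi^{(i)}:\partial\pi_1\E\to\grassman_i(\R^d)$. The standard Sambarino-type reparametrization of the geodesic flow on $U\E$ attached to such a projective Anosov representation has closed orbit indexed by $[\gamma]$ of period
\[
\log\frac{\lambda_1(\Lambda^i\rho\gamma)}{\lambda_2(\Lambda^i\rho\gamma)} \;=\; \lambda_i(\rho\gamma)-\lambda_{i+1}(\rho\gamma) \;=\; \sigma_i(\lambda(\rho\gamma)),
\]
so the topological entropy of this flow is exactly $h^{\sigma_i}_\rho$. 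Thus Theorem B is equivalent to the assertion that each of these Anosov flows has topological entropy $1$.

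The Frenet property of the Hitchin curve forces the tangent direction to $\xi^{(i)}$ at each $x$ to be algebraically determined by the flags $\xi^{(i-1)}(x)$ and $\xi^{(i+1)}(x)$; it follows that each $\xi^{(i)}$ is a $C^1$ embedded curve in $\grassman_i(\R^d)$, and hence its Pl\"ucker image in $\P(\Lambda^i\R^d)$ is a $C^1$ embedded circle. This is the Hitchin-specific geometric input that distinguishes the setting from a generic projective Anosov representation, whose limit curve is only H\"older.

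Finally, a Patterson-Sullivan / conformal-measure argument for projective Anosov representations identifies the topological entropy of the associated flow with the Hausdorff dimension of its limit set in $\P(\Lambda^i\R^d)$, measured in the Busemann shadow metric induced by the highest-weight cocycle of $\Lambda^i\rho$. Once one shows that this shadow metric is bi-Lipschitz with Euclidean arc length on the $C^1$ Pl\"ucker image of $\xi^{(i)}$, the Hausdorff dimension must equal $1$, yielding $h^{\sigma_i}_\rho=1$. The main obstacle, as I see it, is establishing precisely this bi-Lipschitz comparison: it requires uniform control of the Anosov expansion rate along the Frenet flag together with transversality of $\xi^{(i-1)}$ and $\xi^{(d-i)}$, and this is where the full strength of Hitchin positivity enters essentially. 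Both a weakening to a H\"older comparison, and an improvement to a one-sided inequality, would only give $h^{\sigma_i}_\rho\leq 1$, so the matching lower bound (coming from a conformal measure supported on the $C^1$ curve) is the delicate part of the argument.
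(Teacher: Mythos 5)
Your high-level strategy---realize $h^{\sigma_i}_\rho$ as the topological entropy of an Anosov flow attached to level $i$ of the Frenet flag, and pin that entropy to $1$ using the $C^{1+\alpha}$ regularity furnished by the Frenet property---matches the paper's. The mechanism you propose for the last step, however, is genuinely different and, as you yourself note, contains the crux of the matter as an unresolved gap.

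First, a factual correction: the ``standard'' reparametrization attached to the projective Anosov representation $\Lambda^i\rho$ has closed orbit of period $\lambda_1(\Lambda^i\rho\gamma)=\omega_i(\lambda(\rho\gamma))$, not $\lambda_1(\Lambda^i\rho\gamma)-\lambda_2(\Lambda^i\rho\gamma)$. The difference $\lambda_1-\lambda_2$ is precisely the period of the unstable Jacobian cocycle, and to even define this cocycle you need the limit set to carry a $C^1$ structure equivariantly preserved by the group. This is where the Hitchin/Frenet input enters, and it is not encoded in the abstract projective Anosov framework.

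Second, and more importantly, the Patterson--Sullivan mechanism you propose (entropy $=$ Hausdorff dimension of the limit curve in a Busemann shadow metric, then a bi-Lipschitz comparison with arclength) is not what the paper does, and the bi-Lipschitz comparison you flag as the main obstacle is indeed the hard part; a H\"older comparison or a one-sided inequality, as you observe, only yields $h^{\sigma_i}_\rho\leq 1$ (this weaker inequality is essentially what was known before, with an $\alpha$-correction). The paper circumvents the Hausdorff dimension route entirely. Instead of working with the one-dimensional limit circle $\xi^{(i)}(\partial\pi_1\E)$, it constructs a \emph{two-dimensional} $C^{1+\alpha}$ submanifold $\Lim^i_\rho=\{\ell_i(x,y):(x,y)\in\bord^2\pi_1\E\}\subset\P(\R^d)$ (the intersections $\z_i(x)\cap\z_{d-i+1}(y)$), builds the tautological line bundle $\sf{F}^i_\rho$ over it, and shows that the fiberwise dilation flow $\phi^i$ is a $C^{1+\alpha}$ Anosov flow whose unstable Jacobian has periods $\sigma_{i-1}(\lambda(\rho\g))$. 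The key identity is then the classical Bowen--Ruelle SRB relation $P(-\lambda^u)=0$ for $C^{1+\alpha}$ Anosov flows, which says directly that the reparametrization by $\lambda^u$ has topological entropy $1$. No shadow lemma, no conformal measure on the limit set, no Hausdorff dimension; the only dynamical input is that the SRB potential has zero pressure. The metric comparison you struggle with is precisely the analytic content that Bowen--Ruelle packages into the volume lemma.

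Finally, your proposal omits a structural step the paper cannot avoid: the flow $\phi^i$ is only properly discontinuous and cocompact when $\cone_\rho\cap\ker\eps_i=\{0\}$, i.e.\ when $\eps_i$ does not change sign on the limit cone. This fails, for instance, at the Fuchsian locus when $i=(d+1)/2$, and a priori could fail far from the Fuchsian locus for other $i$. The paper therefore proves $h^{\sigma}_\rho=1$ only on an explicit open neighborhood $U$ of the Fuchsian locus, and then invokes analyticity of $\rho\mapsto h^\sigma_\rho$ on the whole Hitchin component to conclude globally. Your route, if it were completed, would presumably be global from the outset (the exterior power is projective Anosov everywhere), which would be an advantage; but as it stands the central comparison is missing and the proposal does not constitute a proof.
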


Theorem B states that the simple roots $\sigma$ always belong to $\bord\D_\rho,$ regardless of $\rho\in\Hitchin(\E,d).$  Let us explain how this implies Theorem A.

\begin{proof}[Proof of Theorem A] Let $\Delta_\Pi$ be the convex hull of $\Pi,$ denote by $\inte\Delta_\Pi$ its relative interior and consider $\rho\in\Hitchin(\E,d).$ Since $\D_\rho$ is convex and $\Pi\subset\bord\D_\rho$ one has $\Delta_\Pi\subset\D_\rho.$ Hence, Proposition \ref{prop:JF} and the computations in Example \ref{exFuchs} give $$h_X(\rho)=\min\{\|\varphi\|_{\frak a^*}:\varphi\in\D_\rho\}\leq\min\{\|\varphi\|_{\frak a^*}:\varphi\in\Delta_\Pi\}=1.$$

If $h_X(\rho)=1,$ then the intersection $\bord\D_\rho\cap\inte\Delta_\Pi$ is non-empty, thus $\inte\Delta_\Pi\subset\bord\D_\rho.$ Moreover, since $\bord\D_\rho$ is closed one has $\Delta_\Pi\subset\bord\D_\rho.$

Since $\D_\rho\cap \frak a^*_G$ is strictly convex, the only
possibility is for $\frak a^*_G$ to be $1$-dimensional, i.e. the
Zariski closure of $\rho$ has rank 1\footnote[2]{A recent classification of possible Zariski closures of a Hitchin representation, obtained by Guichard \cite{clausura}, implies directly that $G_\rho$ is isomorphic to $\PSL(2,\R).$ In our present situation a direct proof of this fact is possible and easy, so we include it for completeness.}.  Moreover, $\frak
a_G=\{(d-1,d-3,\cdots,1-d)t:t\in\R\}.$ Since a purely loxodromic
matrix does not commute with a one-parameter compact group,
$G_\rho$ is simple and actually its Lie algebra is isomorphic to $\frak{sl}(2,\R)$ (recall
the classification of rank 1 real-algebraic simple Lie groups). Hence, the group $G_\rho$ is a finite covering of $\PSL(2,\R).$ Since $G_\rho$ is linear the connected component of the identity ${(G_\rho)}_0$ is isomorphic to $\PSL(2,\R).$ Since $\rho$ can be connected to a Fuchsian representation, for every $\g\in\pi_1\E$ there exists a path, through purely loxodromic matrices, from $\rho(\g)$ to a diagonalizable matrix with eigenvalues of the same sign. This implies that $\rho(\g)$ has all its eigenvalues of the same sign and hence belongs to $(G_\rho)_0.$
This completes the proof.
\end{proof}

\begin{figure}[ht]\begin{center}
\input{drho1.pstex_t}
\caption{\small{The simple roots force the linear form in $\D_\rho$ closest to the origin, to be below a
certain affine subspace.}}\label{figure-Drho}
\end{center}\end{figure}

In fact, Theorem B and the last proof provide a rigid upper bound for the entropy of each linear form in the interior of the dual cone $(\frak a^+)^*.$ Indeed, if $\varphi\in\inte(\frak a^+)^*$ then it is a linear combination of elements in $\Pi$ with (strictly) positive coefficients, i.e. the half line $\R_+\cdot\varphi$ intersects $\inte\Delta_\Pi.$ Notice that $h^\varphi_\rho$ is the only number such that $$h_\rho^\varphi\varphi\in\bord\D_\rho.$$ The upper bound of $\rho\mapsto h_\rho^\varphi$ is hence the number $\algo{\varphi}$ such that $\algo{\varphi}\varphi\in\Delta_\Pi$ (see figure \ref{figure-Drho}).

\begin{cor}\label{cor:cfi} Consider $\varphi\in\inte(\frak a^+)^*,$ then for all $\rho\in\Hitchin(\E,d)$ one has $h^\varphi_\rho\leq \algo{\varphi},$ and equality only holds if $\rho$ is Fuchsian.
\end{cor}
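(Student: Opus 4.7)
The plan is to imitate closely the argument that has just established Theorem A, parametrizing it along the ray $\R_+\cdot\varphi$ rather than along the ray realizing the minimum of $\|\cdot\|_{\frak a^*}$ on $\D_\rho$.

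First, I would combine Theorem B (which places $\Pi\subset\bord\D_\rho$) with the convexity of $\D_\rho$ furnished by Proposition \ref{prop:convexoreps} to obtain $\Delta_\Pi\subset\D_\rho$. Next, I would use the fact that the dual cone $(\frak a^+)^*$ is generated by the simple roots, so that any $\varphi\in\inte(\frak a^+)^*$ is a strictly positive linear combination of elements of $\Pi$; consequently the ray $\R_+\cdot\varphi$ meets the affine hyperplane $V_\Pi$ in the relative interior of the simplex $\Delta_\Pi$, and that intersection point is precisely $\algo{\varphi}\varphi$ by the very definition of $\algo{\varphi}$. Thus $\algo{\varphi}\varphi\in\D_\rho$, and the homogeneity $h^{t\varphi}_\rho=h^\varphi_\rho/t$ for $t>0$ rewrites this membership as $h^\varphi_\rho\leq\algo{\varphi}$.

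For the equality case I would argue by contradiction, reducing to the situation already handled in the proof of Theorem A. Assuming $h^\varphi_\rho=\algo{\varphi}$, the point $\algo{\varphi}\varphi$ belongs simultaneously to $\bord\D_\rho$ and to $\inte\Delta_\Pi$. By convexity of $\D_\rho$ together with $\Pi\subset\bord\D_\rho$, a boundary point lying in the relative interior of the simplex forces the whole open simplex into $\bord\D_\rho$, and closedness then yields $\Delta_\Pi\subset\bord\D_\rho$. From here the end of the proof of Theorem A applies verbatim: the strict convexity of $\D_\rho\cap\frak a_G^*$ is incompatible with a non-degenerate piece of $\rest(\Delta_\Pi)$ lying in its boundary, forcing $\frak a_G^*$ to be one-dimensional, and the classification of rank-one real simple Lie groups combined with the connectedness argument through purely loxodromic matrices then concludes that $\rho$ is Fuchsian.

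The main obstacle is essentially absent once Theorem A is in place: the only small new verification is that $\varphi\in\inte(\frak a^+)^*$ projects to an \emph{interior} (rather than boundary) point of $\Delta_\Pi$, so that the convexity argument propagates from that single point of $\inte\Delta_\Pi$ to the whole simplex. This is immediate from the identification of $(\frak a^+)^*$ as the cone generated by $\Pi$ at the origin, and the corollary is thus a straightforward reparametrization of the proof of Theorem A along an arbitrary open direction of the dual Weyl chamber.
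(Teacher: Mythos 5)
Your argument is correct and is essentially the paper's own: the inequality follows from $\Delta_\Pi\subset\D_\rho$ (Theorem B plus convexity) together with the homogeneity $h^{t\varphi}_\rho=h^\varphi_\rho/t$, and the equality case is handled exactly as in the proof of Theorem A, by propagating the boundary point $\algo{\varphi}\varphi\in\bord\D_\rho\cap\inte\Delta_\Pi$ to all of $\Delta_\Pi$ and invoking strict convexity of $\D_\rho\cap\frak a_G^*$ to force $\rk G_\rho=1$. Your one flagged verification, that $\R_+\cdot\varphi$ meets the relative interior of $\Delta_\Pi$, is precisely the observation the paper makes when introducing $\algo{\varphi}$, so nothing is missing.
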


In particular, considering the linear form $\varphi_{1d}(a)=(a_1-a_d)/2=(\sum\sigma_i)/2,$ one has $h^{\varphi_{1d}}_\rho\leq 2/(d-1).$  Also, notice that $\varphi_1(a)=a_1 = \frac{1}{d} \sum_{j=1}^{d-1} (d-j)\sigma_j$ therefore one also has $c(\varphi_1)=2/(d-1).$

In \cite[Corollary 3.4]{entropia} a similar inequality is proved, namely $\alpha h^{\varphi_1}_\rho\leq 2/(d-1),$ where $\alpha$ is the H\"older exponent of Labourie's equivariant flag curve (see below) for a visual metric on $\bord\pi_1\E$ (induced by a choice of a hyperbolic metric on $\E$). These two rigid inequalities are different in nature: while equality in Corollary \ref{cor:cfi} implies that a totally geodesic copy of $\H^2$ is preserved, \cite[Corollary 3.4]{entropia} states that equality in $\alpha h^{\varphi_1}_\rho\leq 2/(d-1)$ recognizes a specific representation in $\tau_d(\PSL(2,\R)).$

It is interesting to remark that the same argument shows the existence of linear forms whose entropy is bounded from \emph{below} (when defined). For example: $(1+\eps_1)\sigma_1 -\sum_2^d\eps_i \sigma_i$ for small enough $\eps_i>0$ works. %However certain linear forms are not rigid on $\Hitchin(\E,d)$: if the Zariski closure of $\rho$ is $\PSp(4,\R)$ then $t\sigma_1+(1-t)\sigma_3$ has entropy 1.
%(at least for small deformations of the Fuchsian representation).

Furthermore, the special shape of $\bord\D_\rho$ actually provides a 'simple' criterion to determine the rank of the Zariski closure of a Hitchin representation. Observe that $\Delta_\Pi$ is a $(d-1)$-dimensional simplex. Let $F_k\subset\Delta_\Pi$ be a $k$-dimensional face and denote by $\inte F_k$ its relative interior.

\begin{cor}\label{cor:clausura} Consider $\rho\in\Hitchin(\E,d)$ and assume that $(\inte F_k)\cap\bord\D_\rho\neq\vacio,$ then $\rk(G_\rho)\leq \dim\frak a-k.$
\end{cor}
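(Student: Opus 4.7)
The plan is to mimic the shape argument already used in the proof of Theorem A, but now tracking dimensions carefully via the restriction map $\rest:\frak a^*\to\frak a_G^*$.

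First I would show that the hypothesis forces the entire face $F_k$ to lie inside $\bord\D_\rho$. Indeed, by Theorem B we have $\Pi\subset\bord\D_\rho$, so every vertex of $F_k$ lies on $\bord\D_\rho$. Since $\D_\rho$ is convex and $F_k$ is the affine convex hull of those vertices, $F_k\subset\D_\rho$. If some $\varphi_0\in\inte F_k$ satisfies $h^{\varphi_0}_\rho=1$, then $\varphi_0\in\bord\D_\rho$; combined with convexity of $\D_\rho$, a standard separating-hyperplane/supporting-functional argument forces the supporting hyperplane of $\D_\rho$ at $\varphi_0$ to contain the whole flat $F_k$, so $F_k\subset\bord\D_\rho$.

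Next I would push this information through the restriction map. Since the entropy depends only on $\rest(\varphi)$, one has $\D_\rho=\rest^{-1}(\rest(\D_\rho))$ and consequently $\bord\D_\rho=\rest^{-1}(\bord\rest(\D_\rho))$. Therefore $\rest(F_k)$, which is an affine simplex in $\frak a_G^*$, is contained in $\bord\rest(\D_\rho)$. By Remark \ref{obs:non-arithmetic} together with Proposition \ref{prop:convexoreps}, the set $\rest(\D_\rho)$ is strictly convex, so its boundary contains no non-trivial affine segment. Hence $\rest(F_k)$ must reduce to a single point.

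Finally I would read off the dimension bound: $F_k$ lies in a single fiber of $\rest$, which is an affine translate of $\ker(\rest)$. Since $\rest$ is the restriction of linear forms to $\frak a_G$, we have $\dim\ker(\rest)=\dim\frak a^*-\dim\frak a_G^*=\dim\frak a-\rk(G_\rho)$. A $k$-dimensional set fits inside this fiber only if $k\leq\dim\frak a-\rk(G_\rho)$, i.e. $\rk(G_\rho)\leq\dim\frak a-k$, which is exactly the claim.

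No step looks genuinely hard; the only point that deserves care is the first one, where convexity of $\D_\rho$ must be combined with the fact that all vertices of $F_k$ already sit on $\bord\D_\rho$ to upgrade a single interior contact point to full inclusion of $F_k$ in $\bord\D_\rho$. Everything else is a direct consequence of strict convexity of $\rest(\D_\rho)$ and a dimension count.
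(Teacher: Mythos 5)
Your proof is correct and follows essentially the same route as the paper: the supporting-hyperplane argument upgrading the interior contact point to $F_k\subset\bord\D_\rho$, strict convexity of $\rest(\D_\rho)$ via Remark \ref{obs:non-arithmetic} and Proposition \ref{prop:convexoreps}, and a dimension count against $\ker\rest$. The only (cosmetic) difference is that the paper first extends $F_k$ to its affine span inside $\bord\D_\rho$ using that $\bord\D_\rho$ is a closed analytic submanifold and then argues transversality with $\frak a_{G_\rho}^*$, whereas you apply strict convexity directly to $\rest(F_k)$, which lets you bypass the analyticity step.
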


\begin{proof} As in the proof of Theorem A, the fact that $(\inte
F_k)\cap\bord\D_\rho\neq\vacio$ implies that
$F_k\subset\bord\D_\rho.$ Since $\bord\D_\rho$ is a closed
analytic submanifold of $\frak a$ (Proposition
\ref{prop:convexoreps}), one concludes that the affine space
$V_{F_k}$ spanned by $F_k$ is contained in $\bord\D_\rho.$

Recall that $\D_\rho\cap \frak a_{G_\rho}^*$ is strictly convex,
thus $\frak a_{G_\rho}^*$ is transverse to a $k$-dimensional
affine space. Hence $ \dim \frak a_{G_\rho} +k\leq\dim \frak a.$
This finishes the proof.
\end{proof}

\subsection{Theorem B: Finding a suitable Anosov flow} The proof of Theorem B is based on the following (SRB)-principle (Corollary \ref{cor:rep1}): If $\phi$ is a $\clase^{1+\alpha}$ Anosov flow on a closed manifold $X,$ and $\lambda^u:X\to\R_+$ denotes the infinitesimal expansion rate in the unstable direction, then the reparametrization of $\phi$ by $\lambda^u$ has topological entropy equal to 1.

The proof of Theorem B goes by finding, for each $i\in\{2,\ldots,d-1\},$ an Anosov flow whose periodic orbits are indexed in $[\pi_1\E],$ such that the total expansion rate along the periodic orbit $[\g]\in[\pi_1\E]$ is given by $$\int_{[\g]}\lambda^u=\sigma_{i-1}(\lambda(\rho\g)).$$

In $\Hitchin(\E,d)$ our construction only works locally, i.e. on a
neighborhood of the Fuchsian locus, nevertheless the construction is global in
the Hitchin components of the groups $\Ge,$ $\Psp(2k,\R)$ and
$\PSO(k,k+1).$ Analyticity of the entropy function
will allow us to conclude Theorem B in the whole component $\Hitchin(\E,d).$

A basic tool for understanding Hitchin representations is \emph{Labourie's \cite{labourie} equivariant flag curve}.

Let $\scr F$ be the space of complete flags of $\R^d,$ then given $\rho\in\Hitchin(\E,d)$ there exists an equivariant H\"older-continuous map $\z=\z(\rho):\bord\pi_1\E\to\scr F.$ One denotes by $\z_i(x)$ the $i$-dimensional subspace of $\R^d$ associated to $\z(x).$

This equivariant map is a \emph{Frenet curve}, i.e. for
every decomposition $n=d_1+\cdots+d_k\leq d$ ($d_i\in\N$), and
$x_1,\ldots,x_k\in\bord\pi_1\E$ pairwise distinct, the subspaces
$\z_{d_i}(x_i)$ are in direct sum, and moreover
$$\lim_{(x_i)\to x}\bigoplus_1^k\z_{d_i}(x_i)=\z_n(x).$$
This condition implies that one can recover $\z$ from $\z_1$ and we shall sometimes call $\z_1$ the \emph{Frenet equivariant curve} of $\rho$ too. 

The existence of this curve guarantees that each $\rho\g$ is
diagonalizable, indeed, if $\g_+$ and $\g_-$ are the attracting
and repelling points of $\g$ on $\bord\pi_1\E,$ then for
$i\in\{1,\ldots, d\}$ one has that
$$\ell_i(\g_+,\g_-) =\z_i(\g_+)\cap\z_{d-i+1}(\g_-)$$
is a $\rho\g$-invariant line, and its associated eigenvalue has
modulus $e^{\lambda_i(\rho\g)}.$ The Frenet condition implies that the
projective trace of $\z,$ i.e. $\z_1(\bord\pi_1\E),$ is a
$\clase^{1}$-submanifold of $\P(\R^d).$

Denote by $\bord^2\pi_1\E=\{(x,y)\in(\bord\pi_1\E)^2:x\neq y\}.$ We prove in Proposition \ref{prop:ith-eigenvalue} that the
function $\ell_i:\bord^2\pi_1\E\to\P(\R^d)$ defined by $$\ell_i(x,y):=\z_i(x)\cap\z_{d-i+1}(y),$$ provides a
$\clase^{1+\alpha}$ submanifold of $\P(\R^d),$ namely
$$\Lim^i_\rho :=\{\ell_i(x,y):(x,y)\in\bord^2\pi_1\E\}.$$ Moreover when $i=2,\ldots,d-1,$ the tangent space
$T_{\ell_i(x,y)}\Lim^i_\rho$ splits as $$\hom(\ell_i(x,y),\ell_{i-1}(x,y))\oplus\hom(\ell_i(x,y),\ell_{i+1}(x,y)).$$
%This is proved in Proposition \ref{prop:ith-eigenvalue}.

Consider now the bundle $\w{\sf{F}}^i_\rho$ over $\Lim^i_\rho$
whose fiber ${\sf{M}}^i_\rho(x,y)$ at $\ell_i(x,y)$ consists on
the elements of $\ell_i(x,y),$ i.e.
$${\sf{M}}^i_\rho(x,y)=\{v\in\ell_i(x,y)-\{0\}\}/v\sim-v.$$ The
fiber bundle $\w{\sf{F}}^i_\rho$ is equipped with the action of
$\rho(\pi_1\E)$ and with a commuting $\R$-action, defined on each
fiber by $$\w\phi^i_t(v)=e^{-t}v.$$

\begin{teoC}There exists a neighborhood $U$ of the Fuchsian locus on $\Hitchin(\E,d),$ such that if $\rho\in U$ then, for every $i\in\{2,\ldots,d-1\}$ with $i\neq(d+1)/2,$ the action of $\rho(\pi_1\E)$ on $\w{\sf{F}}^i_\rho$ is properly discontinuous and cocompact. The flow $\phi^i$ induced on the quotient ${\sf{F}}^i_\rho=\rho(\pi_1\E)\/\w{\sf{F}}^i_\rho$ is a $\clase^{1+\alpha}$ Anosov flow, whose unstable distribution is given by $E^u_i=\hom(\ell_i(x,y),\ell_{i-1}(x,y)).$
\end{teoC}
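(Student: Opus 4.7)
The plan is to identify $\phi^i$ at the Fuchsian locus explicitly with a reparametrization of the geodesic flow, and then extend the Anosov structure to a neighborhood using continuous (in fact analytic) dependence of Labourie's flag curve together with structural stability of Anosov flows.

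\emph{Fuchsian base case.} For a Fuchsian representation $\rho_0=\tau_d\circ\rho_1$, identify $\R^d$ with $\sf{S}^{d-1}(\R^2)$ and $\z(\rho_0)$ with the Veronese flag. For a hyperbolic $\g\in\pi_1\E$ with attracting/repelling fixed vectors $v_\pm\in\R^2$ of $\rho_1(\g)$, a direct computation gives $\ell_i(\g_+,\g_-)=\R\cdot v_+^{d-i}v_-^{i-1}$, and $\rho_0(\g)$ acts on this line by multiplication by $\pm e^{(d-2i+1)|g|/2}$, where $|g|$ denotes the translation length of $\rho_1(\g)$ in $\H^2$; the exponent $(d-2i+1)$ is nonzero precisely when $i\neq (d+1)/2$. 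A continuous $\rho_0$-equivariant section of unit vectors in $\ell_i(x,y)$ trivializes $\w{\sf F}^i_{\rho_0}\cong\bord^2\pi_1\E\times\R$, identifies the quotient with $\sf{T}^1\E$ (for the hyperbolic structure from $\rho_1$), and presents $\phi^{i,\rho_0}$ as the geodesic flow reparametrized by the nonzero constant $2/(d-2i+1)$. In particular $\phi^{i,\rho_0}$ is a $\clase^\infty$ Anosov flow whose one-dimensional invariant distributions coincide, via the splitting of $T\Lim^i_{\rho_0}$ from Proposition \ref{prop:ith-eigenvalue}, with $\hom(\ell_i,\ell_{i\pm1})$.

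\emph{Deformation to a neighborhood.} By the analytic dependence of $\z(\rho)$ on $\rho$ and Proposition \ref{prop:ith-eigenvalue}, the manifolds $\Lim^i_\rho\subset\P(\R^d)$ and the bundles $\w{\sf F}^i_\rho$ vary continuously in the $\clase^{1+\alpha}$ topology. Choose a continuous family of equivariant trivializations $\w{\sf F}^i_\rho\cong\bord^2\pi_1\E\times\R$ matching the $\rho_0$-one; the $\pi_1\E$-action then reads $\g\cdot(x,y,t)=(\g x,\g y,t+c^i_\rho(\g;x,y))$ for a Hölder cocycle $c^i_\rho$ with $c^i_\rho(\g;\g_+,\g_-)=\lambda_i(\rho\g)$. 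At $\rho_0$ this cocycle is $(d-2i+1)/2$ times the Busemann cocycle of $\rho_1$. For $i\neq (d+1)/2$ and $\rho$ sufficiently close to $\rho_0$, the normalized Jordan projections $\lambda(\rho\g)/\|\lambda(\rho\g)\|_{\frak a}$ sit in a small neighborhood of the Fuchsian ray, whose $i$-th coordinate is nonzero; hence $\lambda_i(\rho\g)$ has fixed sign uniformly in $\g$, and $c^i_\rho$ remains cohomologous to a strictly definite-sign Hölder cocycle. A Livsic-type mapping-torus argument then gives proper discontinuity and cocompactness of the $\pi_1\E\times\R$-action on $\w{\sf F}^i_\rho$, with the quotient homeomorphic to the Fuchsian model $\sf{T}^1\E$.

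\emph{Anosov structure and unstable distribution.} At a periodic orbit corresponding to $\g$, the derivative of the Poincaré return map of $\phi^i$ acts diagonally on the $\clase^{1+\alpha}$ tangent splitting $T\Lim^i_\rho=\hom(\ell_i,\ell_{i-1})\oplus\hom(\ell_i,\ell_{i+1})$, with eigenvalues determined by the eigenvalues of $\rho(\g)$ on the respective lines, whose logarithms are $\pm\sigma_{i-1}(\lambda(\rho\g))$ and $\mp\sigma_i(\lambda(\rho\g))$. By Labourie's Anosov property of Hitchin representations, both $\sigma_{i-1}(\lambda(\rho\g))$ and $\sigma_i(\lambda(\rho\g))$ are bounded below by a positive constant times the word length of $\g$, giving a uniform hyperbolic splitting along closed orbits. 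Combined with the $\clase^\infty$ Anosov structure at $\rho_0$ and $\clase^1$-structural stability of Anosov flows, this yields a $\clase^{1+\alpha}$ Anosov flow throughout $U$ with $E^u=\hom(\ell_i,\ell_{i-1})$.

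The main obstacle is the transfer step: establishing the proper-cocompact action and the Anosov property for $\rho$ near $\rho_0$ without a uniform global handle on the cocycle $c^i_\rho$. The decisive ingredients are Proposition \ref{prop:ith-eigenvalue} (providing the $\clase^{1+\alpha}$ regularity of $\Lim^i_\rho$ and its canonical splitting) and the uniform non-degeneracy (of the correct sign) of $\lambda_i(\rho\g)$; the restriction $i\neq(d+1)/2$ is precisely the condition ensuring this non-degeneracy at $\rho_0$, hence in a neighborhood by continuity of the Jordan projection in $\rho$.
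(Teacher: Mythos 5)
Your Fuchsian base case computation is correct and gives a useful explicit picture, and you correctly identify that the relevant condition is the non-degeneracy of $\eps_i$ on the limit cone (essentially $\cone_\rho\cap\ker\eps_i=\{0\}$, the paper's explicit neighborhood $U_i$). You also correctly set up the H\"older cocycle $c^i_\rho$ with periods $\lambda_i(\rho\g)$; the paper makes this precise using the Reparametrizing Theorem \ref{teo:reparam}, whereas your appeal to a ``Livsic-type mapping-torus argument'' is a gesture at the same thing. So far the approaches are compatible.

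The genuine gap is in the final step, where you invoke $\clase^1$-structural stability of Anosov flows to transfer the Anosov property from $\rho_0$ to nearby $\rho$. Structural stability applies to $\clase^1$-small perturbations of a flow on a \emph{fixed} smooth manifold. Here the $\clase^{1+\alpha}$ manifolds $\Lim^i_\rho$ and the bundles ${\sf F}^i_\rho$ change with $\rho$, and the only identification you construct — the equivariant trivialization $\w{\sf F}^i_\rho\cong\bord^2\pi_1\E\times\R$ — is H\"older, not $\clase^1$ (indeed $\bord^2\pi_1\E$ carries no preferred smooth structure, and the $\clase^{1+\alpha}$ structure on $\Lim^i_\rho$ comes from its embedding in $\P(\R^d)$). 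To run structural stability you would need to prove that the embedded submanifolds $\Lim^i_\rho\subset\P(\R^d)$ and the flows $\phi^i_\rho$ vary $\clase^1$-continuously with $\rho$, and then build $\clase^1$-close models on a common manifold; none of this is established, and it does not follow from H\"older continuity of the flag curve. In addition, uniform hyperbolicity along periodic orbits together with a continuous invariant splitting is not by itself enough to conclude the Anosov property — one first needs global shadowing/product-structure information. The paper avoids all of this: Theorem \ref{teo:teoC} produces a H\"older orbit equivalence between $\w{\sf F}^i_\rho$ and $\w\UG$, from which the quotient flow $\phi^i$ is \emph{metric Anosov}; the differential then preserves the continuous distribution $\hom(\ell_i,\ell_{i-1})$ (checked on the dense set of periodic orbits), and the metric Anosov estimates are propagated from periodic orbits to their closure, yielding the differentiable Anosov property. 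You should replace the structural-stability step with this metric-Anosov bootstrapping argument (or supply the missing $\clase^1$-continuity of the embedded manifolds and flows in $\rho$ if you want to keep the perturbative route).
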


Theorem C is the statement of Corollary \ref{cor:tuttipapel}. Sections \ref{section:i} and \ref{section:cociclosholder} are devoted to its proof.

\begin{ex}
When $d=3$ the representation $\rho$ preserves a proper open
convex set $\Omega\subset \P(\R^3)$ and the map $\ell_2$ is a
$2$-fold covering from the annulus $\bord^2\Omega$ to the M\"obis
strip $\P(\R^3)-\overline\Omega$ (see Barbot \cite{barbot}).  If
moreover $\rho\in\Hitchin(\E,3)$ is Fuchsian, then
$\lambda_2(\rho\g)=0$ for all $\g\in\pi_1\E,$ hence each $v\in
{\sf{M}}^2_\rho(\g_+,\g_-)$ is fixed by $\rho\g.$ Thus, the action
of $\rho(\pi_1\E)$ on $\w{\sf{F}}^i_\rho$ is not proper. A similar situation
occurs for $d=2k-1$ and $i=k.$
\end{ex}

\begin{obs}\label{obs:U} The neighborhood $U$ of Theorem C is explicit. For $i\in\{2,\ldots,d-1\}$ denote by $$U_i=\{\rho\in\Hitchin(\E,d): \cone_\rho\cap\ker \eps_i=\{0\}\}$$ ($U_1$ and $U_d$ are uninteresting since $\frak a^+\cap\ker\eps_1=\frak a^+\cap\ker\eps_d=\{0\}$). This is an open set (Corollary \ref{cor:conolimite}) that contains the Fuchsian locus except when $d=2k-1$ and $i=k.$ Theorem C is proved for $U=\bigcap_{i\neq(d+1)/2} U_i.$ Notice that the case $\Hitchin(\E,3)$ needs to be treated separately, we do so in section \ref{section:convex}.
\end{obs}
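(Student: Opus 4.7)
The Observation contains two separate claims: the openness of each set $U_i$ and the inclusion of the Fuchsian locus in $U_i$ (except for the arithmetic exception $d=2k-1$, $i=k$). I would argue them independently.

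For openness, the plan is to invoke the upper semicontinuity of $\rho \mapsto \cone_\rho$ promised by Corollary \ref{cor:conolimite}, with respect to a natural topology on closed cones of $\frak a^+$ (e.g.\ the Hausdorff topology on intersections with the unit sphere). Fix $\rho_0 \in U_i$ and let $K_0 = \cone_{\rho_0} \cap S(\frak a)$, where $S(\frak a)$ is the unit sphere in $\frak a$ for $\|\ \|_{\frak a}$. By hypothesis $K_0$ is compact and disjoint from the closed hyperplane $\ker \eps_i$, so the continuous function $|\eps_i|$ is bounded below on $K_0$ by some $\delta>0$. Upper semicontinuity then supplies a neighborhood $V$ of $\rho_0$ in $\Hitchin(\E,d)$ such that for every $\rho \in V$ the intersection $\cone_\rho \cap S(\frak a)$ lies in the open $\delta/2$-neighborhood of $K_0$. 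In particular it misses $\ker \eps_i$, so $\rho \in U_i$.

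For the inclusion of the Fuchsian locus, I would apply the explicit computation already carried out in the Example: for a Fuchsian $\rho_0$ and every $\g\in\pi_1\E$,
$$\lambda(\rho_0 \g) \;=\; \frac{|\g|}{2}\,v_d, \qquad v_d \;=\; (d-1,d-3,\ldots,3-d,1-d) \,\in\, \frak a^+.$$
Hence $\cone_{\rho_0}$ is the single ray $\R_{\geq 0}\cdot v_d$, and $\cone_{\rho_0}\cap\ker\eps_i = \{0\}$ if and only if $\eps_i(v_d)\neq 0$. A direct evaluation gives $\eps_i(v_d) = d - 2i + 1$, which vanishes precisely when $2i = d+1$, i.e.\ when $d = 2k-1$ is odd and $i=k$. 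In every other case $v_d\notin\ker\eps_i$, and so $\rho_0 \in U_i$, as claimed.

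The main obstacle is to justify the upper-semicontinuity input used in the openness argument: the limit cone $\cone_\rho$ is defined as the closure of the cone generated by a countable family of Jordan projections, and one must rule out the appearance of new accumulation directions under arbitrarily small deformations of $\rho$. This is precisely what Corollary \ref{cor:conolimite} packages, ultimately resting on the equicontinuous dependence of the Cartan and Jordan projections on $\rho$ for Anosov representations. Once that ingredient is granted, both assertions of the Observation reduce to the elementary arguments above.
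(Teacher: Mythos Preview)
Your argument is correct and matches the paper's own justification (spelled out in Section~\ref{section:cociclosholder}): openness of each $U_i$ follows from the continuity of $\rho\mapsto\P(\cone_\rho)$ in Corollary~\ref{cor:conolimite}, and the Fuchsian inclusion from the explicit ray $\cone_{\rho_0}=\R_+\cdot(d-1,d-3,\ldots,1-d)$ together with $\eps_i(d-1,\ldots,1-d)=d-2i+1$. One minor point: Corollary~\ref{cor:conolimite} in fact gives full continuity (not merely upper semicontinuity), and its proof runs through the compact set $\{\int f^\t_\rho\,dm:m\in\cal M^\phi\}$ and density of periodic measures rather than through uniform control of Cartan/Jordan projections as you speculate.
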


Assume from now on that $d\neq3$ and that $i\neq (d+1)/2.$ Let $U$ be the neighborhood provided by Theorem C and consider $\rho\in U.$ Since $\phi^i$ is a $\clase^{1+\alpha}$ Anosov flow, one can consider the expansion rate $\lambda^u:{\sf{F}}^i_\rho\to\R_+$ along the unstable distribution $E^u_i$ defined by $$\lambda^u(x)=\left.\frac{\partial}{\partial t}\right|_{t=0}\frac 1 \kappa
\int_0^\kappa \log \det(d_x\phi_{t+s}^i|E^u_i) ds$$ (for any $\kappa>0,$ see subsection
\ref{subsection:1u}). Corollary \ref{cor:tuttipapel} states that if $\g\in\pi_1\E$ then
$$\int_{\g}\lambda^u=\sigma_{i-1}(\lambda(\rho\g)),$$ i.e. if one reparametrizes $\phi^i$ with $\lambda^u,$ then the period of the periodic orbit
$[\g]$ is $\sigma_{i-1}(\lambda(\rho\g)).$

Corollary \ref{cor:rep1} states that the reparametrization of $\phi^i$ by $\lambda^u $ has topological entropy $1.$ Since the topological entropy of an Anosov flow is the exponential growth rate of its periodic orbits, one concludes $$1=\lim_{s\to\infty}\frac{\log\#\{[\g]\in[\pi_1\E]:\sigma_{i-1}(\lambda(\rho\g))\leq s\}}s=h^{\sigma_{i-1}}_\rho.$$

The unstable distribution of the inverse flow $v\mapsto
\phi^i_{-t}v,$ is $\hom(\ell_i(x,y),\ell_{i+1}(x,y)),$ so the same
argument proves that $h^{\sigma_{i}}_\rho=1.$ Finally, observe that
even though $i\neq(d+1)/2,$ we have achieved all possible simple
roots.

One concludes that for all $\sigma\in\Pi,$ the function $\rho\mapsto h_\rho^\sigma$ is constant equal $1$ on the open set $U.$ Since $\Hitchin(\E,d)$ is an analytic manifold (Hitchin \cite{hitchin}), Corollary \ref{cor:conolimite} implies that this map is analytic on $\Hitchin(\E,d),$ hence, it is globally constant. This finishes the proof of Theorem B.

\subsection{Further consequences}\label{subsect:furthercons}

Labourie \cite{labourie} observes that if $\rho\in\Hitchin(\E,d)$ and its equivariant Frenet curve $\z_1:\bord\pi_1\E\to\P(\R^d)$ is of class $\clase^\infty$ then one can recover the flag curve by means of its derivatives, namely $$\z_k=\z_1\oplus\z_1'\oplus\cdots\oplus \z_1^{(k-1)},$$ where $\z_1^{(i)}$ is the $i$-th derivative of $\z_1$ in an affine chart. He also remarks that there is no reason for $\z_1$ to be of class $\clase^\infty,$ we prove in section \ref{section:regularidad} the following theorem.

\begin{teoD} Let $\rho$ be a Hitchin representation such that $\z_1$ is of class $\clase^\infty,$ then $\rho$ is Fuchsian.
\end{teoD}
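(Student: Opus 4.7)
The plan is to show that the $C^\infty$ hypothesis on $\z_1$ propagates to the whole osculating flag, and that equivariance under $\rho$ then rigidifies $\z_1$ into a Veronese (rational normal) curve, forcing $\rho$ to factor through the irreducible $\tau_d\colon\PSL(2,\R)\to\PSL(d,\R)$.

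First I would run Labourie's reconstruction: since $\z_1$ is $C^\infty$, the identity $\z_k=\z_1\oplus\z_1'\oplus\cdots\oplus\z_1^{(k-1)}$ makes every $\z_k$ a $C^\infty$ subbundle of the trivial $\R^d$-bundle over $\bord\pi_1\E$. Choosing a local smooth lift $v(x)\in\z_1(x)$, the Frenet property ensures that $\{v,v',\ldots,v^{(d-1)}\}$ is a basis at each $x$, and the relation $v^{(d)}=\sum_{k=0}^{d-1} p_k(x)\,v^{(k)}$ produces the classical Wilczynski invariants of $\z_1$: intrinsic tensors of prescribed weight on $\bord\pi_1\E$ that depend only on the projective equivalence class of the curve.

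Next I would exploit the equivariance $\z_1(\g\cdot x)=\rho(\g)\z_1(x)$: because each $\rho(\g)$ is a projective transformation, it preserves Wilczynski invariants, so these tensors become $\pi_1\E$-equivariant when $\bord\pi_1\E$ is parametrized through $\z_1$. The derivative of $\g$ at its attracting fixed point $\g_+$ in this parametrization is $e^{(\lambda_2-\lambda_1)(\rho\g)}\in(0,1)$, and these factors range over a dense subset of $(0,1)$ as $\g$ varies over $\pi_1\E$. A nonzero-weight equivariant tensor must therefore vanish at every $\g_+$, hence identically by density of fixed points. A classical theorem of Wilczynski then asserts that a $C^\infty$ curve in general position in $\P(\R^d)$ with all Wilczynski invariants vanishing is projectively equivalent to the rational normal curve, i.e.\ to a $\tau_d(\PSL(2,\R))$-orbit. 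Consequently $\z_1(\bord\pi_1\E)$ is a Veronese curve preserved by $\rho(\pi_1\E)$; since its stabilizer in $\PSL(d,\R)$ is exactly $\tau_d(\PSL(2,\R))$, the representation $\rho$ must be Fuchsian.

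The main obstacle is the careful bookkeeping with Wilczynski weights: one needs to verify that the conformal action of $\g$ on $\bord\pi_1\E$ through the smooth parametrization induced by $\z_1$ really has derivative $e^{(\lambda_2-\lambda_1)(\rho\g)}$ at $\g_+$, and to extract the vanishing of each tensor from the density of these dilation factors. A conceptually parallel route, which leverages the Anosov machinery of Theorem~C, is to apply $C^\infty$ Anosov rigidity (Ghys in codimension one, or Benoist--Foulon--Labourie in general) to the flows $\phi^i$: under the $C^\infty$ hypothesis on $\z_1$ they become $C^\infty$ Anosov flows with $C^\infty$ invariant distributions, and the resulting smooth algebraic conjugacy pins $\lambda(\rho\g)$ to the Fuchsian profile $\frac{|\g|}{2}(d-1,d-3,\ldots,1-d)$, which forces $\rho$ to be Fuchsian by the last step of the proof of Theorem~A.
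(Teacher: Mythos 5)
Your primary route via Wilczynski invariants is genuinely different from the paper's and is essentially sound, modulo the details you yourself flagged. The paper stays inside its Anosov-flow framework and proves Theorem~D in two steps. First it shows (Proposition~\ref{prop:local}) that $\z_1\in\clase^\infty$ already forces $\cone_\rho\cap\ker\eps_i=\{0\}$ for every $i\neq(d+1)/2$, i.e.\ $\rho\in U$: the geodesic flow of the convex Anosov representation $\L^i\rho$ then has smooth stable and unstable distributions, so Ghys's volume lemma and the SRB Theorem~\ref{teo:volumen} give $\lambda^u\sim\lambda^s$, whence $\sigma_i(\lambda(\rho\g))=\sigma_i\circ\ii(\lambda(\rho\g))$ and the cone avoids the walls $\ker\eps_i$. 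Then (Proposition~\ref{regular}) the same Ghys/SRB combination, applied this time to the flows $\phi^i$ of Theorem~C, yields $\sigma_{i-1}(\lambda(\rho\g))=\sigma_i(\lambda(\rho\g))$ for all $i$, which pins $\frak a_{G_\rho}$ to the Fuchsian diagonal. What your Wilczynski argument buys is independence from the SRB/contact machinery and, notably, from the open set $U$: it never needs the flows $\phi^i$ to exist at all, whereas the paper must prove $\rho\in U$ as a separate preliminary step. Two points to tighten in your write-up: the vanishing of each Wilczynski invariant does not require the dilation factors $e^{(\lambda_2-\lambda_1)(\rho\g)}$ to be dense in $(0,1)$ --- it suffices that each factor is $\neq 1$ (which the Anosov gap $\lambda_1>\lambda_2$, Proposition~\ref{prop:interior}, guarantees for every nontrivial $\g$) together with the density of the attracting fixed points $\g_+$ in $\bord\pi_1\E$; and the invariants $\Theta_3,\ldots,\Theta_d$ transform as honest $k$-differentials only once the lower-weight ones are known to vanish, so the vanishing should be organized as an induction on the weight. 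Your ``parallel route'' at the end is close to Proposition~\ref{regular}, but it invokes the full Ghys/Benoist--Foulon--Labourie smooth-conjugacy rigidity where the paper needs only the lighter statement that the flow preserves a volume form, and in that variant one still must justify $\rho\in U$ beforehand.
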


%A standard computation in the Cartan algebras of $G=\Psp(2k,\R),$  $\PSO(k,k+1)$ or $\Ge$ shows that $$(\inte\frak a_G^+)\cap \ker\eps_i=\{0\}$$ for all $i\neq (d+1)/2.$ Proposition \ref{prop:interior} implies that, if $\rho\in\Hitchin(\E,d)$ is such that $\rho(\pi_1\E)$ is contained in (a conjugate of) $G,$ then $\rho\in U$ (Remark \ref{obs:U}). In other words, Proposition \ref{regular} is not local for such groups.

\subsection{Historical comments}\label{subsection:hist}

A slightly different version of the set $\D_\rho$ was introduced by Burger \cite{burger} for product representations  $\rho=\rho_1\times\rho_2:\G\to G_1\times G_2,$ where $G_i$ is a simple rank 1 group, and $\rho_i:\G\to G_i$ is convex cocompact. It is also dual to Quint's \cite{quint2} \emph{growth indicator function}, defined for a Zariski-dense subgroup of a real-algebraic semisimple Lie group. Quint's definition involves the Cartan projection (instead of the Jordan projection) and with his definition Proposition \ref{prop:JF} holds for any such subgroup (Quint \cite{quint2}). The relation between our definition and his, established in \cite{exponential}, (is only known to) holds for a Anosov representation of a hyperbolic group with respect to a minimal parabolic subgroup.

The statement of Theorem B arose from a discussion between the second author with Bertrand Deroin and Nicolas Tholozan. Using random walk techniques, they prove \cite{BertrandNicholas} that if $\rho,\eta\in\Hitchin(\E,d)$ and $\sigma\in\Pi$ then $$\sup_{\g\in\pi_1\E}\frac{\sigma(\rho\g)}{\sigma(\eta\g)}\geq1.$$ Their theorem suggested that Theorem B should be true and it is quite possible that their method also provides a proof.

The construction of the flow $\phi^i=(\phi_t^i:\sf F^i_\rho\to \sf F^i_\rho)_{t\in\R}$ is analogous to the construction of the geodesic flow of a projective Anosov representation in \cite{presion}, this construction is explained in section \ref{section:convexA}. The advantage of considering this variation is that one can guarantee further regularity of the objects on consideration, which is needed to apply the Sinai-Ruelle-Bowen Theorem. The geodesic flow of a projective Anosov irreducible representation was introduced in \cite{quantitative} under the terminology of convex representations.

\subsection{Acknowledgements}
Theorem A is a question due (independently?) to Gilles Courtois
and Fran\c cois Labourie, A. Sambarino is grateful to them for
proposing this problem. He is also extremely grateful to Bertrand Deroin and Nicolas Tholozan for insightful discussions, in particular for suggesting that $h^{\sigma}_\rho=1$ should be true. He would also like to thank Marc Burger, Olivier Glorieux and
Fran\c{c}ois Labourie for interesting discussions. R. Potrie
gratefully acknowledges the hospitality of Universit\'e Paris-Sud
(Orsay).

% !TEX root = entropiaHitchin.tex

\section{Reparametrizations and Thermodynamic Formalism}\label{section:1}

%The standard reference for the following is Katok-Hasselblat \cite{katokh}.

Let $X$ be a compact metric space, $\phi=(\phi_t)_{t\in\R}$ a continuous flow on $X$ without fixed points and $V$ a finite dimensional real vector space. Consider a continuous map $f:X\to V,$ and denote by $p(\tau)$ the period of a $\phi$-periodic orbit $\tau.$ The \emph{period} of $\tau$ for $f$ is defined by $$\int_\tau f=\int_0^{p(\tau)}f(\phi_sx)ds,$$ for any $x\in\tau.$

We say that a map $U:X\to V$ is $\clase^1$ \emph{in the direction
of the flow} $\phi,$ if for every $x\in X,$ the map $t\mapsto
U(\phi_tx)$ is of class $\clase^1,$ and the map

$$x\mapsto \left.\frac{\partial }{\partial t}\right|_{t=0}U(\phi_tx)$$ is
continuous. Two continuous maps, $f,g:X\to V$ are \emph{Liv\v
sic-cohomolo\-gous} if there exists a map $U,$ which is $\clase^1$
in the direction of the flow, such that for all $x\in X$ one has

$$f(x)-g(x)=\left.\frac{\partial}{\partial t}\right|_{t=0}
U(\phi_tx).$$ Notice that if this is the case then $\int fdm=\int
gdm$ for any $\phi$-invariant measure $m.$ In particular, $f$ and
$g$ have the same periods.

If $f:X\to\R$ is positive, then $f$ has a positive minimum and
hence for every $x\in X$ the function $\k_f:X\times\R\to V,$
defined by $\k_f(x,t)=\int_0^tf(\phi_sx)ds,$ is an increasing
homeomorphism of $\R.$ Thus there is a continuous function
$\alpha_f:X\times\R\to\R$ that verifies

\begin{equation}\label{equation:inversa}
\alpha_f(x,\k_f(x,t))=\k_f(x,\alpha_f(x,t))=t,\end{equation} for
every $(x,t)\in X\times\R.$

\begin{defi}\label{defi:repa}The \emph{reparametrization} of $\phi$ by $f:X\to\R_{>0},$ is the flow $\psi=\psi^f=(\psi_t)_{t\in\R}$ on X defined by $\psi_t(x)=\phi_{\alpha_f(x,t)}(x),$ for all $t\in\R$ and $x\in X.$ If $f$ is H\"older-continuous, we say that $\psi$ is a H\"older reparametrization of $\phi.$
\end{defi}

By definition, the period of a periodic orbit $\tau$ for $\psi^f$
is the period of $\tau$ for $f.$ Denote by $\cal M^{\phi}$ the
space of $\phi$-invariant probability measures on $X.$ The
\emph{pressure} of a continuous function $f:X\to\R,$ is defined by
$$P(\phi,f)=\sup_{m\in\cal M^{\phi}}h(\phi,m)+\int_X fdm,$$ where
$h(\phi, m)$ is the metric entropy of $m$ for $\phi.$ A
probability measure $m,$ on which the least upper bound is
attained, is called an \emph{equilibrium state} of $f.$ An
equilibrium state for $f\equiv0$ is called a \emph{measure of
maximal entropy}, and its entropy is called the \emph{topological
entropy} of $\phi,$ denoted by $h_{\textrm{top}}(\phi).$

%If $f$ is positive, and $m$ is a $\phi$-invariant probability measure on $X,$ then the probability measure $m^\#,$ defined by \begin{equation}\label{equation:numeral}\frac{dm^\#}{dm}(\cdot)=\frac{f(\cdot)}{\int fdm},\end{equation} is invariant under $\psi^f.$

\begin{lema}[{\cite[Lemma 2.4]{quantitative}}]\label{lema:reparam} Let $f:X\to\R_{>0}$ be a continuous function. Assume the equation $$P(\phi,-sf)=0\qquad s\in\R,$$ has a finite positive solution $h,$ then $h$ is the topological entropy of $\psi^f.$ In particular the solution is unique. Conversely if $h_{\textrm{\emph{top}}}(\psi^f)$ is finite then it is a solution to the last equation.
%If this is the case the bijection $m\mapsto m'$ induces a bijection between equilibrium states of $-hF$ and probabilities of maximal entropy for $\psi_t.$
\end{lema}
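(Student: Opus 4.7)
The plan is to reduce the lemma to Abramov's formula for the metric entropy of a reparametrized flow combined with the variational principle. First I would establish the canonical correspondence between $\phi$-invariant and $\psi^f$-invariant probability measures. Since $f>0$ is continuous on the compact space $X$, the integral $\int_X f\,dm$ is bounded below away from zero for every $m\in\cal M^\phi$, so
$$m^f:=\frac{f\cdot m}{\int_X f\,dm}$$
is a well-defined probability measure on $X$. A direct computation using \eqref{equation:inversa} shows that $m^f$ is $\psi^f$-invariant and that $m\mapsto m^f$ is a bijection $\cal M^\phi\to\cal M^{\psi^f}$, with inverse $m'\mapsto ((1/f)\cdot m')/\int_X (1/f)\,dm'$.

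Second, I would invoke Abramov's formula, which asserts that along this correspondence
$$h(\psi^f,m^f)=\frac{h(\phi,m)}{\int_X f\,dm}.$$
Combined with the variational principle applied to $\psi^f$, this yields
$$h_{\textrm{top}}(\psi^f)=\sup_{m\in\cal M^\phi}\frac{h(\phi,m)}{\int_X f\,dm}.$$
I would then unwind the definition of pressure: $P(\phi,-sf)=\sup_m\bigl(h(\phi,m)-s\int_X f\,dm\bigr)$, so the equation $P(\phi,-sf)=0$ is equivalent to $\sup_m h(\phi,m)/\int_X f\,dm = s$. Because $\int_X f\,dm\geq \min f>0$, the function $s\mapsto P(\phi,-sf)$ is strictly decreasing on its range of finiteness, and in particular has at most one zero. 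Both directions of the lemma follow at once: a finite positive solution $h$ must equal $h_{\textrm{top}}(\psi^f)$, and conversely if $h_{\textrm{top}}(\psi^f)$ is finite then by the displayed identity it is a solution.

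I expect the main technical obstacle to be the rigorous justification of Abramov's formula in this continuous-time, fixed-point-free setting. The standard argument passes through a Poincar\'e cross-section (or a suitable small-time return map) and verifies that the measurable partitions underlying the Kolmogorov--Sinai definition of entropy transform correctly under the time change; this is classical but requires some care with $\sigma$-algebras and the choice of generator. Granting Abramov, the remainder is a formal manipulation of the variational principle together with monotonicity of pressure in $s$, so the core effort of the proof is concentrated in Step 2.
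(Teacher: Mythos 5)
Your proposal is correct and follows essentially the same route as the proof in the cited reference. The paper itself does not reprove this lemma; it simply invokes Lemma~2.4 of the reference \cite{quantitative}, and the argument there is precisely the one you sketch: the bijection $m\mapsto m^f=f\cdot m/\int f\,dm$ between $\cal M^\phi$ and $\cal M^{\psi^f}$, Abramov's formula $h(\psi^f,m^f)=h(\phi,m)/\int f\,dm$, and then the variational principle together with the fact that $\int f\,dm$ is pinched between $\min f>0$ and $\max f<\infty$ to pass from $P(\phi,-sf)=0$ to $s=\sup_m h(\phi,m)/\int f\,dm=h_{\mathrm{top}}(\psi^f)$. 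One small remark: the monotonicity of $s\mapsto P(\phi,-sf)$ that you invoke for uniqueness is a correct but slightly redundant add-on, since the equivalence $P(\phi,-sf)=0\Leftrightarrow s=\sup_m h(\phi,m)/\int f\,dm$ (which needs the lower and upper bounds on $\int f\,dm$ to control the limiting sequence $m_n$ in both directions) already pins $s$ down uniquely.
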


\subsection{(Metric) Anosov flows and vector valued potentials}

We will now define \emph{metric Anosov flows}. The transfer of
classical results from axiom A flows to this more general setting
is provided by Pollicott's work \cite{smaleflows}, and references
therein.

As before $\phi$ denotes a continuous flow on the compact metric space $X.$ For $\eps>0$ one defines the \emph{local stable set} of $x$ by $$W^s_\eps(x)=\{y\in X:d(\phi_tx,\phi_t y)\leq\eps\ \forall t>0\textrm{ and }d(\phi_tx,\phi_t y)\to0\textrm{ as }t\to\infty\}$$ and the \emph{local unstable set} by $$W^u_\eps(x)=\{y\in X:d(\phi_{-t}x,\phi_{-t} y)\leq\eps\ \forall t>0\textrm{ and }d(\phi_{-t}x,\phi_{-t} y)\to0\textrm{ as }t\to\infty\}.$$

\begin{defi}\label{defi:anosovtop} We will say that $\phi$ is a \emph{metric Anosov flow} if the following holds:
\begin{itemize} \item[-] There exist positive constants $C,$ $\lambda$ and $\eps$ such that for every $x\in X,$ every $y\in W^s_\eps(x)$ and every $t>0$ one has $$d(\phi_t(x),\phi_t(y))\leq Ce^{-\lambda t}$$ and such that for every $y\in W^u_\eps(x)$ one has $$d(\phi_{-t}(x),\phi_{-t}(y))\leq Ce^{-\lambda t}.$$
\item[-] There exists $\delta>0$ and a continuous map $\nu:\{(x,y)\in X\times X:d(x,y)<\delta\}\to \R$ such that $\nu(x,y)$ is the unique value such that $W^u_\eps(\phi_\nu x)\cap W^s_\eps(y)$ is non empty, and consists of exactly one point.
\end{itemize}
\end{defi}

A flow is said to be \emph{transitive} if it has a dense orbit. From now on we will assume that $\phi$ is a transitive metric Anosov flow.

%Anosov's closing Lemma is a standard dynamical tool in hyperbolic dynamics.

%\begin{teo}[Anosov's closing Lemma c.f. Shub \cite{shub}]\label{teo:anosovclosing} Let $\phi$ be transitive topological Anosov flow, then periodic orbits are dense in the set of ergodic invariant probability measures of $\phi.$
%\end{teo}

\begin{teo}[Liv\v sic \cite{livsic}]\label{teo:livsic3} Consider a H\"older-continuous map $f:X\to V,$ if $\int_\tau f=0$ for every periodic orbit $\tau,$ then $f$ is Liv\v sic-cohomologous to $0.$
\end{teo}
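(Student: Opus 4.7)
The plan is to reduce Theorem \ref{teo:livsic3} to the classical scalar Livšic theorem for transitive metric Anosov flows, which is available in Pollicott's framework \cite{smaleflows}. Concretely, fix a basis $e_1,\ldots,e_n$ of $V$ and write $f=\sum_{i=1}^n f_i\, e_i$ with $f_i\colon X\to\R$ Hölder-continuous. The hypothesis $\int_\tau f=0$ for every periodic orbit $\tau$ means $\int_\tau f_i=0$ coordinate-wise, so the scalar Livšic theorem supplies, for each $i$, a continuous function $U_i\colon X\to\R$ which is $\clase^1$ in the direction of $\phi$ and satisfies $f_i(x)=\partial_t|_{t=0}U_i(\phi_t x)$. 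Setting $U:=\sum_i U_i\, e_i\colon X\to V$ then yields a map that is $\clase^1$ in the direction of $\phi$ with $\partial_t|_{t=0}U(\phi_t x)=f(x)$, which is exactly the required Livšic-cohomology.

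To make the argument self-contained (and to avoid passing through a basis, which is harmless but feels unnatural for a $V$-valued statement), I would alternatively repeat the standard construction directly with $V$-valued integrals. Pick any point $x_0\in X$ whose $\phi$-orbit is dense (it exists by transitivity), and define
\begin{equation*}
U(\phi_t x_0):=\int_0^t f(\phi_s x_0)\, ds\in V.
\end{equation*}
This map is clearly $\clase^1$ along the orbit with $\partial_t U(\phi_t x_0)=f(\phi_t x_0)$. The crux is to prove that $U$ is uniformly continuous on the orbit of $x_0$, which allows it to be extended by continuity to all of $X$ preserving the cohomological identity.

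The main obstacle — and the heart of the matter — is establishing this uniform continuity. The idea is the standard shadowing/closing argument: if $\phi_t x_0$ and $\phi_{t'} x_0$ are close, then by the local product structure of a metric Anosov flow (the map $\nu$ and the contraction estimates in Definition \ref{defi:anosovtop}), the finite orbit segment from $\phi_t x_0$ to $\phi_{t'} x_0$ can be shadowed by an actual periodic orbit $\tau$ of nearby length. Hölder continuity of $f$ together with the exponential contraction on stable/unstable sets bounds $\|\int_t^{t'} f(\phi_s x_0)\,ds -\int_\tau f\|_V$ by a uniform constant independent of $t,t'$; then the hypothesis $\int_\tau f=0$ upgrades this to a uniform bound that tends to zero with $d(\phi_t x_0,\phi_{t'}x_0)$. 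This is precisely Livšic's original argument carried out verbatim with $V$-valued integrals, using norm in $V$ in place of absolute value.

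Granting uniform continuity, $U$ extends continuously to $X$. A standard verification — approximating an arbitrary $x\in X$ by $\phi_{t_n}x_0$ and using the $\clase^1$ dependence established along the dense orbit — shows that the extended $U$ is still $\clase^1$ in the direction of $\phi$ and that its flow-derivative equals $f$. This completes the proof. The only nontrivial input beyond elementary manipulations is the Anosov closing/shadowing property, which in the metric Anosov setting is provided by Pollicott's machinery \cite{smaleflows}; modulo this, the vector-valued statement is a cosmetic upgrade of the scalar one.
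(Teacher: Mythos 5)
The paper does not prove this result; it simply cites Liv\v{s}ic's original work \cite{livsic2} (with the transfer to the metric Anosov setting delegated to Pollicott \cite{smaleflows}, as noted at the start of that subsection). So there is no ``paper proof'' to compare against, and what you have written is essentially a fill-in of the standard argument.

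Your first reduction is the cleanest and entirely correct: since $V$ is finite-dimensional, choosing a basis turns the $V$-valued hypothesis $\int_\tau f=0$ into the coordinate-wise scalar hypotheses $\int_\tau f_i=0$, the scalar Liv\v{s}ic theorem produces each $U_i$, and $U=\sum_i U_i e_i$ is $\clase^1$ in the flow direction with flow-derivative $f$ because a finite sum of continuous flow-derivatives is continuous. That alone settles the theorem given the scalar version, which is exactly what the paper is invoking. Your second, basis-free sketch (defining $U$ along a dense orbit and extending by uniform continuity) is Liv\v{s}ic's original argument carried out with $V$-valued integrals; the key ingredients you identify -- density of an orbit, the Anosov closing lemma in the metric Anosov setting, the H\"older/exponential-contraction estimate controlling $\bigl\|\int_t^{t'}f(\phi_s x_0)\,ds\bigr\|_V$ -- are all in place. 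One small phrasing slip: you first say the discrepancy between the orbit-segment integral and $\int_\tau f$ is ``bounded by a uniform constant,'' when the shadowing estimate in fact gives a bound of the form $C\,d(\phi_t x_0,\phi_{t'}x_0)^\beta$ directly; the hypothesis $\int_\tau f=0$ is used only to remove the $\int_\tau f$ term, not to ``upgrade'' a constant bound to a decaying one. This does not affect the correctness of the argument, only its presentation.
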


Consider a H\"older-continuous function $f:X\to \R$ with
non-negative periods and define its \emph{entropy} by
$$h_f=\limsup_{s\to\infty}\frac1s\log\#\{\tau\textrm{
periodic}:\int_\tau f\leq s\}\in[0,\infty].$$ Clearly, the entropy
of a function only depends on the periods of the function, therefore
two Liv\v sic cohomologous functions have the same entropy. One
has the following lemma.

\begin{lema}[{Ledrappier \cite[Lemma 1]{ledrappier}+\cite[Lemma 3.8]{quantitative}}]\label{lema:positiva} Consider a H\"older-continuous function $f:X\to\R$ with non-negative periods. Then the following statements are equivalent: \begin{itemize}\item[-] the function $f$ is Liv\v sic-cohomologous to a positive H\"older-continuous function,  \item[-] there exists $\kappa>0$ such that $\int_\tau f>\kappa p(\tau)$ for every periodic orbit $\tau,$ \item[-] the entropy $h_f\in(0,\infty).$\end{itemize}
\end{lema}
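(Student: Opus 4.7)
I would prove the cycle $(a)\Rightarrow(b)\Rightarrow(c)\Rightarrow(b)\Rightarrow(a)$, with the two forward implications easy and the two returns requiring thermodynamic formalism and a Liv\v sic--type construction respectively.

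\textbf{Easy directions $(a)\Rightarrow(b)\Rightarrow(c)$.} For $(a)\Rightarrow(b)$: if $f-g=\frac{\partial}{\partial t}|_{t=0}U(\phi_t\cdot)$ with $g$ a positive H\"older function on the compact space $X$, set $\kappa:=\min g>0$. Integrating the coboundary over any periodic orbit $\tau$ yields $U(\phi_{p(\tau)}x)-U(x)=0$, so $\int_\tau f=\int_\tau g\geq \kappa\, p(\tau)$. For $(b)\Rightarrow(c)$, sandwich the counting function by combining the lower bound with the trivial upper bound $\int_\tau f\leq\|f\|_\infty\,p(\tau)$:
\[
\#\{\tau:p(\tau)\leq s/\|f\|_\infty\}\leq \#\{\tau:\textstyle\int_\tau f\leq s\}\leq \#\{\tau:p(\tau)\leq s/\kappa\}.
\]
Since the exponential growth rate of periodic orbits of a transitive metric Anosov flow equals $h_{\textrm{top}}(\phi)\in(0,\infty)$, one pinches $h_f$ in $[h_{\textrm{top}}(\phi)/\|f\|_\infty,\,h_{\textrm{top}}(\phi)/\kappa]\subset(0,\infty)$.

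\textbf{From finite positive entropy to uniform positive periods, $(c)\Rightarrow(b)$.} The plan is to use the pressure function $s\mapsto P(\phi,-sf)$, which is continuous, convex, with $P(\phi,0)=h_{\textrm{top}}(\phi)>0$. Weak-$\ast$ density of periodic orbit measures inside $\cal M^\phi$ (valid for transitive metric Anosov flows via the specification property of \cite{smaleflows}) together with the hypothesis of non-negative periods gives $\int f\,dm\geq 0$ for every invariant $m$, so $P(\phi,-sf)$ is non-increasing. A Bowen-type periodic orbit count parallel to Lemma \ref{lema:reparam}, but without assuming $f>0$, identifies $h_f$ with the unique positive zero of $P(\phi,-sf)$; its existence is guaranteed by finiteness of $h_f$. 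If $m_0$ is the equilibrium state at $s=h_f$, the identity $h(\phi,m_0)=h_f\int f\,dm_0$ together with positivity of $h(\phi,m_0)$ for H\"older potentials on metric Anosov flows gives $\int f\,dm_0>0$. Strict convexity of pressure then forces $\int f\,dm>0$ for \emph{every} invariant $m$, and weak-$\ast$ compactness of $\cal M^\phi$ upgrades this to $\kappa:=\inf_m\int f\,dm>0$; evaluating against normalized periodic orbit measures yields $\int_\tau f\geq\kappa\,p(\tau)$.

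\textbf{The main obstacle, $(b)\Rightarrow(a)$.} This is the heart of the lemma, since uniform positivity of the averaged periods is a priori weaker than cohomology to a pointwise positive function. Following Ledrappier, the plan is to define
\[
g(x):=\lim_{T\to\infty}\frac{1}{T}\int_0^T f(\phi_s x)\,ds,
\]
with $T$ running through the periods of a well-chosen sequence of periodic orbits $\tau_T$ that shadow the orbit of $x$ on $[0,T]$. The shadowing provided by Definition \ref{defi:anosovtop} together with H\"older regularity of $f$ and the exponential contraction on $W^{s/u}_\eps$ gives $|\int_0^T f(\phi_sx)\,ds-\int_{\tau_T}f|=O(1)$ uniformly in $x$ and $T$; dividing by $T$ and using $(b)$ yields $g(x)\geq\kappa$. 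Applying the same shadowing estimates to close pairs $(x,y)$ produces H\"older regularity of $g$. By construction $f-g$ has zero periods, so Theorem \ref{teo:livsic3} provides a map $U$, of class $\clase^1$ in the flow direction, with $f-g=\frac{\partial}{\partial t}|_{t=0}U(\phi_t\cdot)$, establishing $(a)$. The real difficulty is performing the shadowing and the H\"older control in the purely metric (non-smooth) Anosov setting, which is precisely the technical content of Lemma~1 of \cite{ledrappier} and of Lemma~3.8 of \cite{quantitative}.
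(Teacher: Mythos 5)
Your two outer implications are fine (for (a)$\Rightarrow$(b) just take $\kappa=\tfrac12\min g$ to get the strict inequality, and (b)$\Rightarrow$(c) is the correct sandwich), and note that the paper itself offers no proof of this lemma — it is quoted from Ledrappier \cite{ledrappier} and \cite{quantitative} — so the comparison below is with the arguments those references actually use. The two hard implications in your proposal, however, contain genuine gaps.

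For (b)$\Rightarrow$(a), the function $g(x)=\lim_{T\to\infty}\frac1T\int_0^Tf(\phi_sx)\,ds$ cannot do what you ask of it: wherever it is defined it is invariant under the flow, so if it were continuous (let alone H\"older) it would be constant by transitivity; but on each periodic orbit it equals the orbit average $\int_\tau f/p(\tau)$, and these averages are not all equal unless $f$ is already cohomologous to a constant, while periodic orbits are dense. Hence no such H\"older $g$ exists, and the step ``$f-g$ has zero periods, apply Theorem \ref{teo:livsic3}'' collapses. The repair — and this is Ledrappier's actual device — is to average over a \emph{fixed} large time: set $g_T(x)=\frac1T\int_0^Tf(\phi_sx)\,ds$. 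This is H\"older and is Liv\v sic-cohomologous to $f$ via the explicit transfer function $U(x)=\frac1T\int_0^T(T-s)f(\phi_sx)\,ds$, since $\left.\frac{\partial}{\partial t}\right|_{t=0}U(\phi_tx)=g_T(x)-f(x)$. Your shadowing estimate (closing an orbit segment of length $T$ by a periodic orbit and using H\"older continuity plus exponential contraction to get $|\int_0^Tf(\phi_sx)\,ds-\int_\tau f|\le C$ uniformly) is exactly the right ingredient, but applied at fixed $T$: it yields $g_T\ge\kappa-C/T>0$ for $T$ large, and no limit is needed.

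For (c)$\Rightarrow$(b) there are two unjustified steps. First, Lemma \ref{lema:reparam} requires $f>0$, and asserting ``a Bowen-type count parallel to it, without assuming $f>0$, identifies $h_f$ with the unique positive zero of $P(\phi,-sf)$'' is essentially the statement you are trying to prove; moreover the existence of such a zero is not a consequence of $h_f<\infty$ by anything you write — if some invariant measure $m$ satisfies $\int f\,dm=0$ and $h(\phi,m)>0$, then $P(\phi,-sf)\ge h(\phi,m)>0$ for all $s$ and there is no zero. Second, ``strict convexity of pressure then forces $\int f\,dm>0$ for every invariant $m$'' is a non sequitur: positivity of $\int f\,dm_0$ at one equilibrium state says nothing about other invariant measures, and convexity of $s\mapsto P(\phi,-sf)$ does not convert it into a uniform statement. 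The actual content of this implication is precisely to exclude invariant measures (equivalently, weak-$\ast$ limits of normalized periodic orbit measures with averages tending to $0$) having $\int f\,dm=0$: for such an $m$ with positive entropy a specification/equidistribution count gives at least $e^{(h(\phi,m)-\eps)T}$ periodic orbits of period about $T$ with $\int_\tau f\le\eps T$, whence $h_f\ge(h(\phi,m)-\eps)/\eps$ for every $\eps$ and so $h_f=\infty$; the zero-entropy case needs a separate argument. This is where the cited proofs do real work, and your sketch currently replaces it with convexity language that does not close the gap.
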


Denote by $\holder^\alpha(X,V)$ the space of H\"older-continuous
$V-$valued maps with exponent $\alpha.$ For
$f\in\holder^\alpha(X,V)$ denote by $\|f\|_\infty :=\max |f|$ and
$$K_f=\sup \frac{\|f(p)-f(q)\|}{d(p,q)^\alpha},$$ one then defines
the norm of $f$ by $\|f\|_\alpha=\|f\|_\infty+K_f.$

The vector space $(\holder^\alpha(X,V),\|\ \|_\alpha)$ is a Banach
space and Liv\v sic's theorem implies that the vector space of
functions Liv\v sic-cohomologous to 0 is a closed subspace. Denote
by $\Livsic^\alpha(X,V)$ the quotient Banach space, and by $[\
]_L$ the projection.

Denote by $\Livsic_+^\alpha(X,\R)$ the subset of
$\Livsic^\alpha(X,\R)$ consisting of functions Liv\v
sic-cohomologous to a positive function.

\begin{lema}[{\cite[Lemma 2.13]{exponential}}]\label{lema:entropia} The entropy function $h:\Livsic^\alpha_+(X)\to\R_{>0},$ defined by $f\mapsto h_f,$ is analytic.
\end{lema}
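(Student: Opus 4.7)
The plan is to express $h_f$ as the implicit solution of an analytic equation on a Banach space and then invoke the real-analytic implicit function theorem. Combining Lemma \ref{lema:reparam} and Lemma \ref{lema:positiva}, for every class $[f]_L \in \Livsic^\alpha_+(X,\R)$ the entropy $h_f$ is the unique positive solution of $P(\phi,-h_f f)=0$. Thus I would introduce the function
$$F : \R \times \Livsic^\alpha_+(X,\R) \longrightarrow \R, \qquad F(s,[f]_L) = P(\phi,-sf),$$
and apply the analytic implicit function theorem at the point $(h_f,[f]_L)$.

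The first task is to show that $F$ descends to $\Livsic^\alpha_+$ and is jointly analytic. Well-definedness follows from the variational principle: the integral of a derivative in the flow direction against any $\phi$-invariant probability measure vanishes, so pressure only depends on the Liv\v sic class. For analyticity, I would invoke the thermodynamic formalism of Ruelle in the form adapted to transitive metric Anosov flows by Pollicott \cite{smaleflows} via symbolic coding as a suspension over a subshift of finite type: the map $g \mapsto P(\phi,g)$ is real-analytic on $\holder^\alpha(X,\R)$, hence $F$ is jointly analytic on $\R \times \Livsic^\alpha(X,\R)$.

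The second task is to compute $\partial_s F$ at $(h_f,[f]_L)$ and verify that it does not vanish. The classical Ruelle derivative formula for pressure yields
$$\frac{\partial F}{\partial s}(s,[f]_L) \,=\, -\int_X f\, dm_{-sf},$$
where $m_{-sf}$ is the unique equilibrium state of the potential $-sf$, whose existence and uniqueness are again part of Pollicott's formalism. By Lemma \ref{lema:positiva}, the fact that $[f]_L \in \Livsic^\alpha_+$ means $f$ is Liv\v sic-cohomologous to a strictly positive H\"older function, so $\int f\, dm_{-sf} > 0$, and consequently $\partial_s F(h_f,[f]_L) < 0$. The analytic implicit function theorem now produces a real-analytic solution $[f]_L \mapsto h_f$ in a neighbourhood of any given class, proving the lemma.

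The only non-routine input is the analyticity of the pressure functional on the Banach space $\holder^\alpha(X,\R)$ in the \emph{metric} Anosov setting. This is the main obstacle, but it is handled precisely by Pollicott's coding: once the flow is realized as a H\"older suspension over a subshift of finite type, analyticity of pressure reduces to Ruelle's spectral-gap analysis of the associated transfer operator, which depends analytically on the H\"older potential. Everything else in the proof is standard implicit-function-theorem bookkeeping.
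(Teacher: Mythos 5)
The paper itself cites this lemma from \cite{exponential} without reproducing a proof, and the argument there is precisely the one you give: characterize $h_f$ by the Bowen-type pressure equation $P(\phi,-h_f f)=0$ (Lemma \ref{lema:reparam}), invoke analyticity of the pressure functional on $\holder^\alpha(X,\R)$ via Pollicott's symbolic coding of metric Anosov flows and the Ruelle transfer-operator spectral gap, compute $\partial_s P(\phi,-sf)=-\int f\,dm_{-sf}<0$ using Ruelle's derivative formula together with Lemma \ref{lema:positiva}, and close with the analytic implicit function theorem. Your proof is correct and matches the cited argument.
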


Consider now a H\"older-continuous map $f:X\to V,$ and denote by
$\cone_f$ the closed cone of $V$ generated by the periods of $f$

$$\big\{\int_\tau f:\tau\textrm{ periodic}\big\}.$$
Assume its \emph{dual cone}, defined by $\cone_f^*=\{\varphi\in
V^*:\varphi|\cone_f\geq0\},$ is different from $\{0\}.$ The
\emph{entropy} of $\varphi\in\cone^*_f$ is defined by
$h^\varphi_f=h_{\varphi\circ f}.$ The following lemma is now direct using Lemma \ref{lema:positiva} (see also S. \cite[Lemma 3.2]{orbitalcounting}).

\begin{lema}\label{lema:interior} If there exists $\varphi\in\cone_f^*$ with finite entropy then it belongs to the interior of $\cone_f^*.$ If this is the case, any linear form $\varphi\in\cone^*_f$ has finite and positive entropy if and only if it belongs to the interior of $\cone_f^*.$
\end{lema}

We will assume from now on that there exists a linear form in $\cone_f^*$ with finite entropy.

In view of the last lemma, one considers the open subset of $\Livsic^\alpha(X,V)$ defined by $$\Livsic^\alpha_{+}(X,V)=\{[f]_L:\exists\varphi\in \cone_f^* \textrm{ with }h^\varphi_f\in(0,\infty)\}.$$

\begin{lema}\label{lema:cono} The map $\Livsic^\alpha_{+}(X,V)\to \{\textrm{compact subsets of }\P(V)\}$ defined by $$f\mapsto \P(\cone_f),$$ is continuous.
\end{lema}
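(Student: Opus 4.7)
The plan is to prove continuity by verifying upper and lower Hausdorff semicontinuity of $f\mapsto\P(\cone_f)$. Since $\P(V)$ is compact, it suffices to show that if $f_n\to f$ in $\Livsic^\alpha_+(X,V)$ then (i) every cluster point of points taken from $\bigcup_n \P(\cone_{f_n})$ belongs to $\P(\cone_f)$, and (ii) every point of $\P(\cone_f)$ is the limit of points $[v_n]\in\P(\cone_{f_n})$. The basic ingredient is the Lipschitz estimate $|\int_\tau(f-f_n)|\leq\|f-f_n\|_\infty\,p(\tau)$, valid for every periodic orbit $\tau$; since periods are Liv\v sic invariants this estimate descends to the quotient.

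For (ii), fix $[v]\in\P(\cone_f)$ with representative $v\in\cone_f\setminus\{0\}$. As $\cone_f$ is by definition the closed cone generated by the periods of $f$, we may write $v=\lim_{k\to\infty} v^{(k)}$ with each $v^{(k)}=\sum_i t^{(k)}_i\int_{\tau^{(k)}_i}f$ a finite non-negative combination. For each fixed $k$ the analogous sum $v^{(k)}_n:=\sum_i t^{(k)}_i\int_{\tau^{(k)}_i}f_n$ lies in $\cone_{f_n}$ and converges to $v^{(k)}$ as $n\to\infty$ (only finitely many periodic orbits are involved). A standard diagonal selection produces $v_n\in\cone_{f_n}$ with $v_n\to v$, so $[v_n]\to[v]$ in $\P(V)$.

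For (i), the key claim is that any $\varphi\in\inte(\cone_f^*)$ lies in $\cone_{f_n}^*$ for all sufficiently large $n$. Indeed, Lemma \ref{lema:interior} gives $h^\varphi_f\in(0,\infty)$, and Lemma \ref{lema:positiva} then provides $\kappa>0$ such that $\int_\tau\varphi\circ f\geq\kappa\,p(\tau)$ for every periodic orbit; combined with the Lipschitz estimate this yields $\int_\tau\varphi\circ f_n\geq(\kappa-\|\varphi\|\|f-f_n\|_\infty)\,p(\tau)>0$ once $n$ is large. Now suppose $[v_n]\in\P(\cone_{f_n})$ converges to $[u]\in\P(V)$. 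Pick unit representatives $w_n=v_n/\|v_n\|\in\cone_{f_n}$ and extract a subsequence with $w_n\to w$ for some unit vector $w$ representing $[u]$. For every $\varphi\in\inte(\cone_f^*)$ one has $\varphi(w_n)\geq 0$ eventually, hence $\varphi(w)\geq 0$. Since $\cone_f$ is pointed (equivalently, $\inte(\cone_f^*)\neq\emptyset$, which is given by $f\in\Livsic^\alpha_+$), the interior is dense in $\cone_f^*$, so $\varphi(w)\geq 0$ for every $\varphi\in\cone_f^*$. The bipolar theorem for closed convex cones gives $w\in\cone_f^{**}=\cone_f$, hence $[u]=[w]\in\P(\cone_f)$.

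The main obstacle is exactly this upper-semicontinuity step: the bare condition $\varphi\in\cone_f^*$ (non-strict positivity of periods) is not stable under small perturbations of $f$, so one cannot hope for $\cone_f^*\subset\cone_{f_n}^*$ for large $n$. The saving grace is the uniform lower bound $\int_\tau\varphi\circ f\geq\kappa\,p(\tau)$ granted by Lemma \ref{lema:positiva} on the interior, which does survive the perturbation. Combining (i) and (ii) inside the compact space $\P(V)$ yields Hausdorff convergence $\P(\cone_{f_n})\to\P(\cone_f)$, establishing the lemma.
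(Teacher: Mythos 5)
Your proof is correct, and it takes a genuinely different route from the one in the paper. The paper replaces the periods by the compact set $\cal K_f=\{\int f\,dm:m\in\cal M^\phi\}$ of integrals against $\phi$-invariant probability measures: since periodic-orbit measures are dense in $\cal M^\phi$ (Anosov closing lemma, Sigmund), $\cal K_f$ is a compact generating set for $\cone_f$; the map $f\mapsto\cal K_f$ is continuous because $|\int f\,dm-\int g\,dm|\leq\|f-g\|_\infty$ uniformly in $m$; and the only point left to verify is $0\notin\cal K_f$, which follows from the positivity, up to Liv\v sic cohomology, of $\varphi\circ f$ for some $\varphi$ --- after which projectivising a continuously varying family of compact sets uniformly bounded away from $0$ is immediate. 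You avoid invariant measures altogether and prove the two Kuratowski semicontinuities (hence, inside the compact space $\P(V)$, Hausdorff convergence) directly from the periods: lower semicontinuity by approximating points of $\cone_f$ by finite non-negative combinations of periods, and upper semicontinuity via the uniform gap $\int_\tau\varphi\circ f\geq\kappa\,p(\tau)$ for $\varphi\in\inte\cone_f^*$ (Lemmas \ref{lema:interior} and \ref{lema:positiva}) followed by a bipolar argument. The dynamical input is essentially the same in both cases --- the positivity coming from Lemma \ref{lema:positiva} is exactly what gives $0\notin\cal K_f$ in the paper --- but your argument dispenses with Sigmund's density theorem, at the price of being longer and of invoking duality, which requires $\cone_f$ to be a closed \emph{convex} cone; this is consistent with your step (ii), where you already read ``closed cone generated by the periods'' as allowing non-negative combinations, and it is in any case harmless here since the paper's proof exhibits $\cone_f$ as the cone over the convex set $\cal K_f$. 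Your remark that all quantities involved are Liv\v sic-class invariants, so that one may estimate with representatives satisfying $\|f-f_n\|_\infty\to0$, correctly disposes of the quotient-norm issue.
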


\begin{proof}Recall that the space $\cal M^\phi$ of $\phi$-invariant probability measures is compact. Moreover, since $\phi$ is Anosov, periodic orbits viewed as invariant probability measures\footnote[2]{To a periodic orbit $\tau$ one associates the invariant probability measure ${\displaystyle r\mapsto\frac1{p(\tau)}\int_{\tau}r}$} are dense in $\cal M^\phi$ (c.f. Anosov's closing lemma, see Sigmund \cite{sigmund}). Consequently, the set $$\cal K_f=\{\int f dm:m\in\cal M^\phi\}$$ is compact and generates the cone $\cone_f.$ Moreover, $f\mapsto \cal K_f$ is continuous.

In order to show that its projectivisation is also continuous, we need to show that $0\notin\cal K_f,$ but since $\varphi(f)$ is Liv\v sic cohomologous to a positive function, there exists $k>0$ such that $\varphi(\int fdm)>k$ for all $m\in\cal M^\phi.$ This finishes the proof.
\end{proof}

Summarizing one obtains the following:

\begin{cor}\label{cor:entropiaanalitica} Consider $f_0\in\Livsic_+^\alpha(X,V)$ and $\varphi\in\inte\cone_{f_0}^*,$ then the entropy function defined by $f\mapsto h_f^\varphi$ is analytic on a neighborhood $U$ of $f_0$ such that $\varphi\in\inte\cone_f^*$ for all $f\in U.$
\end{cor}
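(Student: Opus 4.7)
The plan is to deduce this directly from the three preceding lemmas. Set $\cal K_f=\{\int f\,dm:m\in\cal M^\phi\}$; as in the proof of Lemma~\ref{lema:cono}, $\cal K_f$ is a compact subset of $V$, depends continuously on $f$ (in the Hausdorff topology), and generates the closed cone $\cone_f$.

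First I would produce the neighborhood $U$ on which $\varphi\in\inte\cone_f^*$. Since $\varphi\in\inte\cone_{f_0}^*$ and $\cal K_{f_0}$ is compact and bounded away from the origin, there exists $\delta>0$ with $\varphi(v)\geq\delta$ for every $v\in\cal K_{f_0}$. Hausdorff continuity of $f\mapsto\cal K_f$ then supplies a neighborhood $U\subset\Livsic_+^\alpha(X,V)$ of $f_0$ on which $\varphi(v)\geq\delta/2$ for every $v\in\cal K_f$. Hence $\varphi$ is strictly positive on $\cone_f\setminus\{0\}$, i.e.\ $\varphi\in\inte\cone_f^*$, and Lemma~\ref{lema:interior} gives $h_f^\varphi=h_{\varphi\circ f}\in(0,\infty)$ for every $f\in U$.

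Next I would factor the function $f\mapsto h_f^\varphi$ as $h\circ L_\varphi$, where $L_\varphi:\Livsic^\alpha(X,V)\to\Livsic^\alpha(X,\R)$ sends $[f]_L$ to $[\varphi\circ f]_L$. This map is well defined at the level of Liv\v sic cohomology classes: if $f-g=\partial_t|_{t=0}U(\phi_t\cdot)$ for some $U:X\to V$ which is $\clase^1$ in the direction of the flow, then $\varphi\circ f-\varphi\circ g=\partial_t|_{t=0}(\varphi\circ U)(\phi_t\cdot)$, and $\varphi\circ U$ is again $\clase^1$ along the flow. The operator $L_\varphi$ is linear and bounded between Banach spaces, hence analytic; by the previous paragraph it carries $U$ into $\Livsic_+^\alpha(X,\R)$, on which $h$ is analytic by Lemma~\ref{lema:entropia}. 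The composition $h\circ L_\varphi$ is therefore analytic on $U$ and coincides with $f\mapsto h_f^\varphi$ by definition.

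I do not expect a genuine obstacle. The only subtlety is to apply Lemma~\ref{lema:cono} at the level of the generating set $\cal K_f\subset V$ rather than its projectivisation $\P(\cone_f)\subset\P(V)$, so that the open condition $\varphi\in\inte\cone_f^*$ is truly preserved under small perturbations; this is precisely the intermediate continuity established inside the proof of Lemma~\ref{lema:cono}. Everything else is a routine packaging of the analytic real-valued entropy with a bounded linear functional.
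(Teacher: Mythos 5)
Your proof is correct and matches the paper's intent: the paper presents this corollary as a direct synthesis of the preceding results (``Summarizing one obtains the following'') without an explicit proof, and your argument spells out precisely that synthesis — Hausdorff continuity of $f\mapsto\mathcal K_f$ from the proof of Lemma~\ref{lema:cono} to stabilize the condition $\varphi\in\inte\cone_f^*$, the criterion of Lemma~\ref{lema:interior}, and the factorization of $f\mapsto h_f^\varphi$ through the bounded linear operator $[f]_L\mapsto[\varphi\circ f]_L$ followed by the analytic entropy functional of Lemma~\ref{lema:entropia}. The one observation worth retaining is the point you flag yourself: Lemma~\ref{lema:cono} as stated only gives continuity of $\P(\cone_f)$, and one really wants the unprojectivised continuity of the compact generating set $\mathcal K_f$ (bounded away from $0$), which is the intermediate fact established inside that lemma's proof; invoking it directly, as you do, is the cleanest route.
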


We say that $f\in\Livsic_+^\alpha(X,V)$ is \emph{non-arithmetic on $V$} if the additive group generated by its periods is dense in $V.$ Consider the set

$$\D_f=\{\varphi\in V^*:P(-\varphi\circ f)\leq0\}.$$ It follows from the definition of pressure that $\D_f$ is convex,
and that if $\varphi\in \D_f$ then $t\varphi\in \D_f$ for all
$t\geq1.$

\begin{prop}[{\cite[Propositions 4.5 and 4.7]{exponential}}]\label{prop:convexo} The set $\D_f$ coincides with the set $\{\varphi\in\cone^*_f:h_f^\varphi\in(0,1]\},$ its boundary $\bord\D_f$ coincides with the set $$\{\varphi\in\cone^*_f:h^\varphi_f=1\},$$ and is a codimension $1$ closed analytic submanifold of $V.$ If moreover $f$ is non-arithmetic on $V,$ then $\D_f$ is strictly convex.
\end{prop}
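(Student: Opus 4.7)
\textbf{Proof plan for Proposition \ref{prop:convexo}.}
My plan is to establish the three assertions in turn, in each case reducing to a standard thermodynamic fact about the pressure $\varphi\mapsto P(-\varphi\circ f)$ on $V^*$.

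First I would identify $\D_f$ set-theoretically. If $\varphi\notin\cone_f^*$, there is a periodic orbit $\tau$ with $\varphi(\int_\tau f)<0$; the invariant probability $\delta_\tau$ has zero entropy and gives $\int(-\varphi\circ f)\,d\delta_\tau=-\varphi(\int_\tau f)/p(\tau)>0$, so the variational principle yields $P(-\varphi\circ f)>0$ and $\varphi\notin\D_f$. Hence $\D_f\subset\cone_f^*$. For $\varphi\in\inte\cone_f^*$, Lemma \ref{lema:interior} gives $h_f^\varphi\in(0,\infty)$ and Lemma \ref{lema:positiva} says $\varphi\circ f$ is Liv\v{s}ic cohomologous to a positive function; by Lemma \ref{lema:reparam} the unique positive solution of $P(-s\varphi\circ f)=0$ is $s=h_f^\varphi$. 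The function $s\mapsto P(-s\varphi\circ f)$ is strictly decreasing (its derivative $-\int\varphi\circ f\,dm_s$ is negative, since $\varphi\circ f$ is cohomologous to a positive function and $m_s$ is a probability), so $P(-\varphi\circ f)\leq 0$ iff $h_f^\varphi\leq 1$, with equality iff $h_f^\varphi=1$. This gives the two equalities of sets.

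Next I would establish the analytic submanifold statement. The map $\varphi\mapsto P(-\varphi\circ f)$ is real analytic on $V^*$: this is Ruelle's theorem on analyticity of pressure with respect to H\"older perturbations of the potential, transferred to transitive metric Anosov flows via the results summarized in Pollicott \cite{smaleflows}. The gradient at $\varphi$ is the vector $-\int f\,dm_\varphi\in V$, where $m_\varphi$ denotes the equilibrium state of $-\varphi\circ f$. At a point $\varphi\in\bord\D_f$ this vector is nonzero: $\int f\,dm_\varphi\in\cone_f$ by the density of periodic measures in $\cal M^\phi$ (used already in Lemma \ref{lema:cono}), and $\varphi(\int f\,dm_\varphi)>0$ since $\varphi\circ f$ is cohomologous to a strictly positive function. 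The implicit function theorem then realizes $\bord\D_f$ as a codimension-one closed real analytic submanifold.

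Finally, for strict convexity under the non-arithmeticity hypothesis, I would prove that $\varphi\mapsto P(-\varphi\circ f)$ is strictly convex on $V^*$, which implies strict convexity of its sublevel set $\D_f$ (a line segment contained in $\bord\D_f$ would force $P$ to vanish identically along it, contradicting strict convexity). Convexity of $P$ in the potential is standard; strict convexity at $\varphi$ in a direction $\psi\in V^*\setminus\{0\}$ amounts to the non-vanishing of the asymptotic variance $\sigma^2_{m_\varphi}(\psi\circ f)$, which by the Liv\v{s}ic-type characterization vanishes iff $\psi\circ f$ is Liv\v{s}ic cohomologous to some constant $c\in\R$. Integrating such a cohomology along every periodic orbit $\tau$ gives the linear relation
\[
\psi\!\left(\int_\tau f\right)=c\cdot p(\tau),\qquad\text{for every }\tau.
\]
The main obstacle, and the step I would spend most care on, is deducing a contradiction from this relation using the hypothesis that the additive group generated by $\{\int_\tau f\}$ is dense in $V$: the case $c=0$ is immediate (it forces the periods into the proper subspace $\ker\psi$), while the case $c\neq 0$ requires exploiting that the length spectrum $\{p(\tau)\}$ of a transitive metric Anosov flow is also non-arithmetic, so that the joint periods $(\int_\tau f,p(\tau))$ generate a dense subgroup of $V\oplus\R$, which cannot lie in the hyperplane $\{(v,t):\psi(v)=ct\}$.
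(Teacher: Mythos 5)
Your first two steps are essentially sound, but note one omission in the set-theoretic identification: you rule out $\varphi\notin\cone_f^*$ and you treat $\varphi\in\inte\cone_f^*$, yet the claimed equality $\D_f=\{\varphi\in\cone^*_f:h^\varphi_f\in(0,1]\}$ also requires that no $\varphi\in\bord\cone_f^*$ (where $h^\varphi_f\in\{0,\infty\}$ by Lemma \ref{lema:interior}) lies in $\D_f$, i.e.\ that $P(-\varphi\circ f)>0$ there. This is true but needs an argument: for such $\varphi$ there is an invariant measure $m_0$ with $\varphi(\int f\,dm_0)=0$, so $t\mapsto P(-t\varphi\circ f)$ is non-increasing and bounded below by $0$; if $P(-\varphi\circ f)\leq0$ it would vanish identically on $[1,\infty)$, forcing equilibrium states of H\"older potentials with zero entropy and zero mean, which is impossible for a transitive metric Anosov flow with positive topological entropy.

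The genuine gap is in your strict convexity step. The statement you propose to prove, that $\varphi\mapsto P(-\varphi\circ f)$ is strictly convex on all of $V^*$, is false under the hypothesis that $f$ is non-arithmetic on $V$: take $V=\R^2$ and $f=(f_1,1)$ with $f_1$ chosen so that the group generated by the pairs $(\int_\tau f_1,p(\tau))$ is dense in $\R^2$; then for $\psi=(0,1)$ the function $\psi\circ f\equiv 1$ is cohomologous to the nonzero constant $1$ and the pressure is affine in the direction $\psi$, although $\D_f$ is still strictly convex. The same example kills the step you flag as the main obstacle: the joint periods $\bigl(\int_\tau f,p(\tau)\bigr)$ there lie in a hyperplane of $V\oplus\R$, so joint non-arithmeticity in $V\oplus\R$ does not follow from (and is not implied by) the hypotheses, and your treatment of the case $c\neq0$ collapses. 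The repair is to aim only at what is needed: a failure of strict convexity of $\D_f$ means $P(-\,\cdot\circ f)$ vanishes identically on a nondegenerate segment of $\bord\D_f$ with direction $\psi$; along such a segment not only the second derivative vanishes (giving, as you say, $\psi\circ f$ cohomologous to a constant $c$) but also the first derivative, which equals $-\int\psi\circ f\,dm_\varphi=-c$, so necessarily $c=0$. Then every period satisfies $\psi(\int_\tau f)=0$, so the group generated by the periods lies in the proper closed subspace $\ker\psi$, contradicting non-arithmeticity; the case $c\neq0$ simply never occurs on the boundary, and no statement about the length spectrum is needed.
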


%\begin{proof}The proof follows the exact same lines of \cite[Propositions 4.5 and 4.7]{exponential}.
%\end{proof}

%\begin{prop}[cf. Ruelle \cite{ruelle}-Ratner\cite{ratnerpresion}]\label{prop:pression} Let $\phi_t:X\to X$ be a transitive Anosov flow and $f,g:X\to\R$ be H\"older continuous. Then the function $t\mapsto P(f-tg)$ is analytic and $$\left.\frac{\partial P(f-tg)}{\partial t}\right|_{t=0}=-\int gdm_f$$ where $m_f$ is $f$'s equilibrium state. If $\int g dm_f=0$ and $$\left.\frac{\partial^2 P(f-tg)}{\partial t^2}\right|_{t=0}=0$$ then $g$ is cohomologous to zero. Thus, if $g$ is not cohomologically trivial and $\int gdm_f=0$ then $t\mapsto P(f-tg)$ is strictly convex.
%\end{prop}

\subsection{SRB measures and reparametrizations}\label{subsection:1u}

In this subsection we recall some classical results in the Sinai-Ruelle-Bowen theory and reinterpret them in the context of reparametrizations. It is common in the literature to state this type of results under a $\clase^2$-hypothesis. We shall explain how those results work in the $\clase^{1+\alpha}$-context.

%We continue in the same context as the previous subsection: $X$ is a compact metric space and $\phi$ is a transitive metric Anosov flow on $X.$
%
%\begin{prop}[Bowen-Ruelle \cite{bowenruelle}]\label{teo:ruellebowen} Given a H\"older-continuous function $G:X\to\R$ there exists a unique equilibrium state for $G$ with respect to $\phi.$  Equilibrium states are ergodic.
%\end{prop}

Assume from now on that $X$ is a compact manifold and that the
flow $\phi$ is $\clase^1.$ We say that $\phi$ is \emph{Anosov} if
the tangent bundle of $X$ splits as a sum of three
$d\phi_t$-invariant bundles $$ TX=E^s\oplus E^0\oplus E^u,$$ and
there exist positive constants $C$ and $c$ such that: $E^0$ is the
direction of the flow and for every $t\geq0$ one has: for every
$v\in E^s$ $$\|d\phi_tv\|\leq Ce^{-ct}\|v\|,$$ and for every $v\in
E^u$ $$\|d\phi_{-t}v\|\leq Ce^{-ct}\|v\|.$$

If $\phi$ is an Anosov flow let $\lambda^u:X\to\R_+$ be the \emph{infinitesimal expansion rate} on the unstable direction, defined by

$$\lambda^u(x)= \left.\frac\partial{\partial t}\right|_{t=0} \frac{1}{\kappa} \int_0^\kappa \log\det( d_x\phi_{t+s}|E^u)ds$$ for some $\kappa>0.$

\begin{obs}\label{obs:expperiodo} Notice that by definition, if $\tau$ is a periodic orbit then $$\int_\tau \lambda^u=\log\det d_x\phi_{p(\tau)}|E^u,$$ for any $x\in\tau.$ Moreover, it is a direct consequence of Liv\v{s}ic's Theorem \ref{teo:livsic3} that the Liv\v{s}ic-cohomology class of $\lambda^u$ does not depend on $\kappa$, hence it will not appear in the notation.
\end{obs}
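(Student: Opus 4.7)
The plan is to reduce both statements to the cocycle identity for the unstable Jacobian $J(t,x) := \log\det(d_x\phi_t|E^u)$, namely $J(a+b,x) = J(a,\phi_b x) + J(b,x)$, which follows from the chain rule applied to $\phi_{a+b} = \phi_a\circ\phi_b$ together with the multiplicativity of the determinant. Here the determinant is measured with respect to a fixed continuous choice of volume forms on the fibers of $E^u$; it is worth noting that at a point $x\in\tau$ of a periodic orbit one has $E^u_{\phi_{p(\tau)}x} = E^u_x$, so $J(p(\tau),x)$ is intrinsic.

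The first step is to simplify $\lambda^u$ itself. A change of variables $u = t+s$ inside the averaging integral turns $\tfrac{1}{\kappa}\int_0^\kappa J(t+s,x)\,ds$ into $\tfrac{1}{\kappa}\int_t^{t+\kappa} J(u,x)\,du$, whose derivative at $t=0$ is simply $(J(\kappa,x) - J(0,x))/\kappa = J(\kappa,x)/\kappa$, using $J(0,x) = 0$. Integrating this identity along a periodic orbit $\tau$, the cocycle rewrites $J(\kappa,\phi_r x) = J(r+\kappa,x) - J(r,x)$, so the integral telescopes to $\tfrac{1}{\kappa}\bigl(\int_{p(\tau)}^{p(\tau)+\kappa} J(u,x)\,du - \int_0^\kappa J(u,x)\,du\bigr)$. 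A final invocation of the cocycle identity at the periodic point $\phi_{p(\tau)}x = x$, giving $J(p(\tau)+v,x) = J(v,x) + J(p(\tau),x)$, reduces this to $\kappa\cdot J(p(\tau),x)$. Dividing by $\kappa$ yields $\int_\tau \lambda^u = \log\det(d_x\phi_{p(\tau)}|E^u)$, as claimed.

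For the second part, since the periods just computed are manifestly $\kappa$-independent, two choices $\kappa,\kappa'$ produce H\"older-continuous functions $\lambda^u_\kappa$, $\lambda^u_{\kappa'}$ whose periods agree on every closed orbit; Theorem \ref{teo:livsic3} then exhibits their difference as a Liv\v sic-coboundary, so the Liv\v sic-cohomology class of $\lambda^u$ is independent of $\kappa$. The only point demanding attention is the H\"older regularity of $x\mapsto J(\kappa,x)$, so that Liv\v sic's theorem applies; this rests on the classical fact that the unstable bundle of a $\clase^{1+\alpha}$ Anosov flow varies H\"older-continuously with $x$, which is the sole non-elementary input and the only place the smoothness hypothesis on $\phi$ enters. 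Beyond this, the verification is a short bookkeeping rather than a substantive obstacle.
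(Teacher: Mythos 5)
Your proof is correct, and it takes the natural route: identify $\lambda^u(x)=J(\kappa,x)/\kappa$ via the fundamental theorem of calculus (after the substitution $u=t+s$), then use the cocycle identity $J(a+b,x)=J(a,\phi_b x)+J(b,x)$ twice — once to unfold the integral over the periodic orbit into a telescope, and once at the fixed point $\phi_{p(\tau)}x=x$ to collapse it to $J(p(\tau),x)$ — and finally feed the $\kappa$-independence of the periods into Liv\v{s}ic's theorem. Since the paper states this as a "notice that by definition" remark with no argument supplied, there is no competing proof to compare against; you have simply filled in the bookkeeping the authors leave implicit, and your flagging of the H\"older regularity of $E^u$ (hence of $x\mapsto J(\kappa,x)$) as the one non-elementary input needed to invoke Liv\v{s}ic is exactly the right point to highlight.
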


\begin{teo}[Sinai-Ruelle-Bowen \cite{bowenruelle}]\label{teo:srb} Let $\phi$ be a $\clase^{1+\alpha}$ Anosov flow on a compact manifold $X,$ then $P(-\lambda^u)=0.$
\end{teo}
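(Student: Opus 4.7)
The plan is to establish both $P(-\lambda^u)\le 0$ and $P(-\lambda^u)\ge 0$, so that the variational principle
\[
P(-\lambda^u)=\sup_{m\in\cal M^\phi}\Bigl(h(\phi,m)-\int_X\lambda^u\,dm\Bigr)
\]
yields the value zero, attained by the SRB measure.

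For the upper bound I would invoke the Margulis--Ruelle inequality, valid for any $\clase^1$ flow: every $\phi$-invariant probability measure $m$ satisfies
\[
h(\phi,m)\le \int_X \sum_{i} \max(\chi_i(x),0)\, dm(x),
\]
where the $\chi_i(x)$ denote the Lyapunov exponents of $d_x\phi_1$. Because $\phi$ is Anosov, at $m$-almost every $x$ the positive Lyapunov exponents are exactly those of $d_x\phi_1|_{E^u(x)}$, and by Oseledec's theorem the right-hand side integrates to $\int_X\lambda^u\,dm$ --- this is the same cohomological identification used in Remark \ref{obs:expperiodo} to rewrite periods as log-unstable-Jacobians. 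Hence $h(\phi,m)-\int\lambda^u\,dm\le 0$ for every $m\in\cal M^\phi$, giving $P(-\lambda^u)\le 0$.

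For the matching lower bound I would construct an equilibrium state for $-\lambda^u$. Following Bowen, I would build a Markov section for $\phi$, which requires only H\"older regularity of the hyperbolic splitting and is therefore automatic under $\clase^{1+\alpha}$. Up to a H\"older time change, $\phi$ then becomes the suspension of a topologically mixing subshift of finite type under a H\"older roof, and the potential $-\lambda^u$ pulls back to a H\"older potential on the symbolic model. Classical thermodynamic formalism (Ruelle) produces a unique equilibrium state $\SRB$, whose conditional measures along unstable manifolds are absolutely continuous with respect to the induced Riemannian volume; this is the measure one expects to realize the supremum.

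The main obstacle is to verify Pesin's entropy formula $h(\phi,\SRB)=\int\lambda^u\,d\SRB$ in the $\clase^{1+\alpha}$ category; this is the single place where the regularity hypothesis above $\clase^1$ is genuinely used. The proof of the formula requires absolute continuity of the unstable-foliation holonomies, which for uniformly hyperbolic $\clase^{1+\alpha}$ systems follows from Pugh--Shub, and then the Ledrappier--Young argument for Pesin's formula carries over verbatim. Once the formula is available, $\SRB$ realizes equality in the Margulis--Ruelle bound, so $P(-\lambda^u)\ge 0$ and the proof is complete.
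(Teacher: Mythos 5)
Your argument is correct in outline but takes a genuinely different route from the paper's. The paper simply ports Bowen--Ruelle's Proposition 4.4 to the $\clase^{1+\alpha}$ setting by verifying that the \emph{volume lemma} survives: once the Lebesgue measure of a Bowen ball is shown to decay like $\exp\bigl(-\int_0^T\lambda^u(\phi_s x)\,ds\bigr)$, the equality $P(-\lambda^u)=0$ drops out directly from the definition of pressure via spanning sets, without any need to exhibit an equilibrium state. Your two-sided scheme --- Margulis--Ruelle for $P(-\lambda^u)\le 0$, and the SRB measure plus Pesin's entropy formula for $P(-\lambda^u)\ge 0$ --- is the Sinai--Pesin--Ledrappier--Young side of the same story, and both halves are true; your identification $\int\sum_i\max(\chi_i,0)\,dm=\int\lambda^u\,dm$ via Oseledec and the cocycle property is fine. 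The trade-offs are that your route leans on considerably heavier machinery, and there is a mild circularity to be aware of: the assertion that the unique equilibrium state for $-\lambda^u$ has absolutely continuous conditionals along unstable leaves is itself a theorem of Bowen--Ruelle whose proof rests on the very same volume-lemma estimate the paper uses. To make the lower bound logically independent, it is cleaner to produce the measure with absolutely continuous unstable conditionals directly by the Pesin--Sinai construction (Ces\`aro averages of $(\phi_t)_*\Leb$ along an unstable plaque), and only then apply Ledrappier--Young to convert a.c.\ conditionals into the entropy formula $h(\phi,\SRB)=\int\lambda^u\,d\SRB$.
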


This is statement is proved in Bowen-Ruelle \cite[Proposition 4.4]{bowenruelle} assuming $\phi$ is $\clase^2$. Let us now give some hints on why the proof carries on in the $\clase^{1+\alpha}$-setting. The $\clase^2$-hypothesis in \cite{bowenruelle} appears for three reasons:

\begin{itemize}
 \item[-] In order to guarantee that the function $x \mapsto E^u(x)$ is H\"older-continuous. This holds for $\clase^{1+\alpha}$ Anosov flows too (see for example Katok-Hasselblatt \cite[Proposition 19.1.6]{katokh}).
 \item[-] In order to show that $t \mapsto \log \det (d_x\phi_t|E^u)$ is $\clase^1$. By using our function $\lambda^u$ this is no longer necessary as long as we show that the volume lemma holds for $\lambda^u.$
 \item[-] To prove the \emph{volume lemma} (\cite[Lemma 4.2]{bowenruelle}) relating the function they define with the rate of decrease of the volume of Bowen balls. This can be proved in our context, for the function $\lambda^u$, by following the same scheme as \cite[Proposition 20.4.2]{katokh}.
\end{itemize}

Theorem \ref{teo:srb} together with Lemma \ref{lema:reparam} give immediately the following corollary.

\begin{cor}\label{cor:rep1} Let $\phi$ be a $\clase^{1+\alpha}$ Anosov flow, then the topological entropy of the reparametrization of $\phi$ by $\lambda^u$ is $1.$
\end{cor}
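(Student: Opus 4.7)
The plan is a short two-step assembly of Theorem \ref{teo:srb} and Lemma \ref{lema:reparam}. First I would check that $\lambda^u$ is pointwise positive, so that the reparametrization $\psi^{\lambda^u}$ of Definition \ref{defi:repa} is well defined. A direct differentiation of the defining integral gives $\lambda^u(x) = \kappa^{-1}\log\det(d_x\phi_\kappa|E^u)$, and the uniform unstable expansion $\|d\phi_{-t}v\|\leq Ce^{-ct}\|v\|$ for $v\in E^u$ implies $\log\det(d_x\phi_\kappa|E^u) \geq c\kappa\dim E^u - O(1)$ uniformly in $x\in X$. Choosing $\kappa$ large enough makes $\lambda^u$ strictly positive on $X$, and by Remark \ref{obs:expperiodo} this choice does not affect the Liv\v{s}ic cohomology class of $\lambda^u$.

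Second, Theorem \ref{teo:srb} asserts $P(\phi,-\lambda^u)=0$, which is precisely the statement that $s=1$ is a positive solution of the equation $P(\phi,-s\lambda^u)=0$. Lemma \ref{lema:reparam} then identifies this solution with the topological entropy of $\psi^{\lambda^u}$, giving $h_{\textrm{top}}(\psi^{\lambda^u})=1$ as claimed.

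There is no substantive obstacle; the corollary is an immediate combination of the two preceding results. The only mild subtlety is the positivity of $\lambda^u$ needed to apply Lemma \ref{lema:reparam}. Should one wish to avoid the choice of a large $\kappa$ above, an alternative route is to invoke Lemma \ref{lema:positiva}: by Remark \ref{obs:expperiodo} the periods $\int_\tau\lambda^u=\log\det|D_x\phi_{p(\tau)}|$ are bounded below by $c\cdot p(\tau)$ via the uniform unstable expansion, so $\lambda^u$ is Liv\v{s}ic-cohomologous to a strictly positive H\"older representative $\tilde\lambda^u$; the reparametrizations $\psi^{\lambda^u}$ and $\psi^{\tilde\lambda^u}$ share all periods and hence the same topological entropy.
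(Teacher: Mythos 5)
Your proof is essentially the paper's: the authors also derive the corollary as an immediate consequence of Theorem \ref{teo:srb} (giving $P(\phi,-\lambda^u)=0$) and Lemma \ref{lema:reparam}, with no further commentary. The only thing you add is the explicit check that $\lambda^u$ is strictly positive (or Liv\v{s}ic-cohomologous to a positive function), which is a hypothesis in Lemma \ref{lema:reparam} that the paper leaves implicit; your computation $\lambda^u(x)=\kappa^{-1}\log\det(d_x\phi_\kappa|E^u)$ and the appeal to uniform unstable expansion (or, equivalently, to Lemma \ref{lema:positiva} via Remark \ref{obs:expperiodo}) correctly close this small gap.
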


In section \ref{section:regularidad} we make use of the following well known result. Denote by $\lambda^s:X\to\R$ the infinitesimal expansion rate of the inverse flow $(\phi_{-t})_{t\in\R}.$

\begin{teo}[Sinai-Ruelle-Bowen \cite{bowenruelle}]\label{teo:volumen} Let $\phi$ be a $\clase^{1+\alpha}$ Anosov flow on a compact manifold $X,$ then $\phi$ preserves a measure in the class of Lebesgue if and only if $\lambda^u$ and $\lambda^s$ are Liv\v sic-cohomologous.
\end{teo}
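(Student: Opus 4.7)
The plan is to combine Theorem \ref{teo:srb}, applied both to $\phi$ and to its inverse flow $\phi_{-t}$, with the uniqueness of equilibrium states of H\"older potentials over transitive metric Anosov flows. Since $\lambda^s$ is by definition the infinitesimal unstable expansion rate of $\phi_{-t}$, and since topological pressure depends only on the set of invariant measures (which is insensitive to time reversal), Theorem \ref{teo:srb} yields $P(-\lambda^u) = 0 = P(-\lambda^s)$. Let $\mu^+$ (resp.\ $\mu^-$) be the unique equilibrium state of $-\lambda^u$ (resp.\ $-\lambda^s$): this is the SRB measure of $\phi$ (resp.\ of $\phi_{-t}$), characterised by having absolutely continuous conditional measures on the local unstable (resp.\ local stable) manifolds of $\phi$.

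\medskip

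For the forward direction, suppose $\phi$ preserves a measure $\mu$ in the Lebesgue class. Hopf's argument, using absolute continuity of the stable and unstable holonomies (classical for $\clase^{1+\alpha}$ Anosov flows), shows that $\mu$ admits absolutely continuous conditionals on both foliations. Hence $\mu$ is simultaneously the SRB of $\phi$ and of $\phi_{-t}$, i.e.\ $\mu = \mu^+ = \mu^-$. By uniqueness of the equilibrium state, two H\"older potentials sharing an equilibrium state must differ by a Liv\v sic coboundary plus an additive constant $c$; applied to $-\lambda^u$ and $-\lambda^s$, comparing pressures gives $0 = P(-\lambda^u) = P(-\lambda^s) - c = -c$, so $c = 0$ and $\lambda^u$ and $\lambda^s$ are Liv\v sic-cohomologous.

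\medskip

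For the converse, assume $\lambda^u$ and $\lambda^s$ are Liv\v sic-cohomologous. Then $-\lambda^u$ and $-\lambda^s$ share the same equilibrium state $\mu := \mu^+ = \mu^-$, so $\mu$ has absolutely continuous conditionals on both the stable and the unstable foliations. The local product structure of the Anosov flow then identifies $\mu$ locally with the product of these two families of conditionals and of Lebesgue along the flow direction, forcing $\mu$ to lie in the Lebesgue class.

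\medskip

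The main obstacle is that the classical references phrase these ingredients in the $\clase^2$ category, while the theorem asks for $\clase^{1+\alpha}$. As discussed by the authors after Theorem \ref{teo:srb}, the relevant tools survive: H\"older regularity of the Anosov splitting (Katok--Hasselblatt \cite[Prop.~19.1.6]{katokh}), absolute continuity of the stable/unstable holonomies for $\clase^{1+\alpha}$ Anosov flows, and the thermodynamic formalism for H\"older potentials over transitive metric Anosov flows (Pollicott \cite{smaleflows}). A strictly one-directional elementary alternative is via Liv\v sic's Theorem \ref{teo:livsic3}: by Remark \ref{obs:expperiodo}, $\lambda^u \sim \lambda^s$ is equivalent to $|\det d_x\phi_{p(\tau)}| = 1$ on every periodic orbit $\tau$; the ``$\Rightarrow$'' direction then drops out of the change of variables formula applied to an invariant smooth density, but the ``$\Leftarrow$'' still requires upgrading periodic-orbit data to a global invariant density, which is exactly what the thermodynamic argument above accomplishes.
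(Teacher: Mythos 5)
The paper does not prove Theorem \ref{teo:volumen}: it is quoted as a classical result of Bowen--Ruelle, invoked in Section \ref{section:regularidad}, with no argument supplied. So there is no internal proof to compare yours against.

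Your reconstruction is correct and uses the standard thermodynamic-formalism route: $P(-\lambda^u)=P(-\lambda^s)=0$ from Theorem \ref{teo:srb} applied to $\phi$ and to its time-reversal; identification of the respective equilibrium states with the SRB measures $\mu^+$ and $\mu^-$ (the ones with absolutely continuous conditionals on unstable, resp.\ stable, leaves); the standard rigidity that two H\"older potentials on a transitive (metric) Anosov flow have the same equilibrium state iff they are Liv\v sic-cohomologous up to an additive constant, the constant then being killed by the pressure normalization; and, conversely, the local product structure of the Gibbs state combined with absolute continuity of the holonomies to conclude $\mu^+=\mu^-$ is Lebesgue class. Two small remarks. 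First, in the forward direction the phrase ``Hopf's argument'' is slightly off --- Hopf's argument is an ergodicity device --- what you actually need is that a $\phi$-invariant measure absolutely continuous with respect to Lebesgue must coincide with both $\mu^+$ and $\mu^-$, which follows either from the a.c.\ disintegration argument you sketch or, more directly, from the fact that the Lebesgue-full basin of $\mu^+$ forces any $\phi$-invariant $\mu\ll\mathrm{Leb}$ to equal $\mu^+$ (and symmetrically $\mu^-$). Second, the uniqueness-of-equilibrium-state step implicitly requires transitivity; this is harmless here, since in the paper's application the flows in question are reparametrized geodesic flows, hence transitive, and is also consistent with the axiom-A/basic-set framework of the cited reference. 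Your closing observation, that Liv\v sic cohomology of $\lambda^u$ and $\lambda^s$ is equivalent via Remark \ref{obs:expperiodo} to $|\det d_x\phi_{p(\tau)}|=1$ along every periodic orbit, is correct and is exactly the elementary content of the ``only if'' direction; you are right that the ``if'' direction genuinely needs the thermodynamic machinery to promote periodic-orbit data to an invariant density.
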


% !TEX root = entropiaHitchin.tex

\section{Projective Anosov representations}\label{section:convexA}

The main purpose of this section and Section
\ref{section:Generalanosov} is to extend several results from
\cite{quantitative} and \cite{exponential} to the Anosov
representations setting. We present here some general results from
\cite{presion} on \emph{projective Anosov representations}. These
representations are a basic tool to study general Anosov
representations (introduced by Labourie \cite{labourie}), as we
shall see in the next section. A more explanatory and detailed
exposition on this class of representations is Labourie
\cite{labourie}, Guichard-Wienhard \cite{olivieranna},
\cite{quantitative} and \cite{presion}.

Let $\G$ be a word hyperbolic group.

\begin{defi} A representation $\rho:\G\to\PGL(d,\R)$ has \emph{transverse maps} if there exist two continuous $\rho$-equivariant maps $(\xi,\xi^*):\bord\G\to\P(\R^d)\times \P((\R^d)^*)$ such that if $x\neq y$ then $\xi(y)\oplus\ker\xi^*(x)=\R^d.$
\end{defi}

In order to define the Anosov property for a representation with transverse maps, we need to recall the \emph{Gromov geodesic flow} of $\G.$ Gromov \cite{gromov} (see also Mineyev \cite{mineyev}) defines a proper cocompact action of $\G$ on $\bord^2\G\times\R,$ which commutes with the action of $\R$ by translation on the final factor. The action of $\G$ restricted to $\bord^2\G$ is the diagonal action.

There is a metric on $\bord^2\G\times\R$, well-defined up to
H\"older equivalence, so that $\G$ acts by isometries, every orbit
of the $\R$ action gives a quasi-isometric embedding and the
traslation flow on the $\R$-coordinate acts by bi-Lipschitz
homeomorphisms. This flow on  $\w\UG=\bord^2\G\times\R$ descends
to a flow $\phi$ on the quotient $\UG=\bord^2\G\times\R/\G.$ This
flow is called \emph{the geodesic flow} of $\G.$

If $\rho$ has transverse maps, the equivariant maps $(\xi,\xi^*)$ provide
two fiber bundles over $\w\UG,$ denoted by $\w\Xi$ and $\w\Theta$
respectively, whose fibers at $(x,y,t)\in\w\UG$ are respectively
$\w\Xi(x,y,t)=\xi(x)$ and
$\w\Theta(x,y,t)=\ker\xi^*(y).$  The diagonal
action of $\G$ on $\w\Xi$ and $\w\Theta$ is properly discontinuous
(because it is on $\w\UG$) and one obtains two vector bundles
$\Xi$ and $\Theta$ over $\UG.$

The geodesic flow of $\G$ on $\w\UG$ extends to $\w\Xi$ and
$\w\Theta$ by acting trivially on the fibers. This flow induces a
flow on the respective quotients. Denote by
$\psi=(\psi_t)_{t\in\R}$  the induced flow on the bundle
$\Xi^*\otimes\Theta.$

The representation $\rho$ is \emph{projective Anosov} if it has transverse maps and the flow
$\psi$ is contracting to the past, i.e. there exist $C,c>0$ such
that for all $w\in\Xi^*\otimes\Theta$ and $t>0$ one has
$$\|\psi_{-t}w\|\leq Ce^{-ct}\|w\|,$$ where $\|\ \|$ is a
Euclidean metric on the bundle $\Xi^*\otimes\Theta.$

For $g\in\PGL(d,\R),$ denote by $\lambda_1(g)$ the logarithm of
the spectral radius of some lift $\w g\in\GL(d,\R)$ of $g,$ with
$\det \w g\in\{-1,1\}.$ We say that $g$ is \emph{proximal} if the
generalized eigenspace of $\w g$ of eigenvalue with modulus
$e^{\lambda_1(g)}$ has dimension 1. Such eigenline, denoted by
$g_+,$ is an \emph{attractor} for $g$ on $\P(\R^d),$ and its
$g$-invariant complement $g_-$ (i.e. $\R^d=g_+\oplus g_-$) is its
\emph{repelling hyperplane}. The following lemma is standard (see
Guichard-Wienhard {\cite[Lemma 3.1]{olivieranna}}).

\begin{lema}\label{lema:proximal} Let $\rho$ be a projective Anosov representation, then for every non-torsion $\g\in\G,$ the element $\rho(\g)$ is proximal on $\P(\R^d),$ its attractive line is $\xi(\g_+)$ and its repelling hyperplane is $\ker\xi^*(\g_-).$
\end{lema}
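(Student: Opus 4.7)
The plan is to translate the dynamical contracting property of $\psi$ into an algebraic statement about the eigenvalues of $\rho(\g)$. Since $\G$ is word hyperbolic and $\g$ has infinite order, it has a well-defined pair of attracting/repelling fixed points $(\g_+,\g_-)\in\bord^2\G$, and the axis $\{(\g_+,\g_-,s):s\in\R\}\subset\w\UG$ descends to a periodic orbit of $\phi$ on $\UG$ of some period $\ell(\g)>0$. By equivariance of $\xi$ and $\xi^*$, the subspaces $E:=\xi(\g_+)$ and $F:=\ker\xi^*(\g_-)$ are both $\rho(\g)$-invariant, and the convexity hypothesis $E\oplus F=\R^d$ yields a $\rho(\g)$-invariant direct sum decomposition of $\R^d$. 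Write $a$ for the eigenvalue of $\rho(\g)|_E$ (a $1\times 1$ block) and $B:=\rho(\g)|_F$.

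The key step is to compute the holonomy of $\psi$ along this periodic orbit. On $\w\Xi^*\otimes\w\Theta$ the lifted flow acts trivially on fibers, so following $\w\psi_{-\ell(\g)}$ from $(\g_+,\g_-,0)$ arrives at $(\g_+,\g_-,-\ell(\g))$ with the identity on the fiber $E^*\otimes F$. The two points are identified in the quotient by the action of $\g$, which on the fiber sends $f\otimes v$ to $(f\circ\rho(\g)^{-1})\otimes\rho(\g)v$; restricted to $E^*\otimes F$ this is exactly $a^{-1}\otimes B$. Hence the return map of $\psi_{-\ell(\g)}$ at the periodic orbit, acting on the fiber $E^*\otimes F$, is $a^{-1}B$.

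Applying the contraction hypothesis $\|\psi_{-t}w\|\leq Ce^{-ct}\|w\|$ to iterates of this return map forces the spectral radius of $a^{-1}B$ to be strictly less than $1$. Equivalently, every eigenvalue $\mu$ of $B$ satisfies $|\mu|<|a|$. Since the spectrum of $\rho(\g)$ on $\R^d$ is the disjoint union of $\{a\}$ and the spectrum of $B$, this says that $|a|=e^{\lambda_1(\rho(\g))}$ is attained with multiplicity one and on the line $E=\xi(\g_+)$; that is precisely the definition of proximality with attracting line $\xi(\g_+)$. The $\rho(\g)$-invariant complement, namely the repelling hyperplane, is then forced to be $F=\ker\xi^*(\g_-)$.

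The main obstacle is the bookkeeping in the second step: one must be careful to identify correctly the monodromy of the descended flow $\psi$ around the periodic orbit with the algebraic action of $\rho(\g)$ on the tensor product, including the dualization on the first factor, so that the sign of the contraction exponent matches the direction of translation by $\g$ on the axis. Once this identification is pinned down, everything reduces to the elementary observation that a contracting linear map $a^{-1}B$ encodes domination of the scalar $|a|$ over the full spectrum of $B$.
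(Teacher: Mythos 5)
Your argument is correct and is essentially the standard one: the paper does not prove this lemma itself (it cites Guichard--Wienhard, Lemma 3.1), and the same holonomy-plus-contraction computation along the periodic orbit of $(\g_+,\g_-)$ — equivariant norms on $\Xi^*\otimes\Theta$, return map $a^{-1}B$ on $\hom(\xi(\g_+),\ker\xi^*(\g_-))$, spectral radius $<1$ forcing $|\mu|<|a|$ for every eigenvalue $\mu$ of $\rho(\g)|_{\ker\xi^*(\g_-)}$ — is exactly what the paper runs in its proof of Proposition \ref{prop:interior} to get the uniform gap $\lambda_1-\lambda_2>c|\g|$. Your bookkeeping (first boundary coordinate is the forward endpoint, $\g$ translates the axis forward, ratios $a^{-1}\mu$ are lift-independent in $\PGL(d,\R)$) matches the paper's conventions, so the proposal stands as written.
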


The equivariant maps are unique, since they are continuous (in fact H\"older-continuous \cite[Lemma 2.5]{presion}) and uniquely defined on a dense set of $\bord\G.$

Denote by $\Lim_\rho=\xi(\bord \G)$ and by
$\Lim_\rho^*=\xi^*(\bord \G).$ If $\rho$ is irreducible, these are
the \emph{limit sets} (on $\P(\R^d)$ and $\P((\R^d)^*)$
respectively) of $\rho(\G),$ introduced by Guivarc'h
\cite{guivarch} and Benoist \cite{limite}. Denote by
$$\Lim_\rho^{(2)}=(\xi,\xi^*)(\bord^2\G)=\{(x,y)\in\Lim_\rho\times\Lim_\rho^*:
\R^d=\ker y\oplus x\}.$$

Consider the tautological bundle $\w \UG_\rho$ over $\Lim_\rho^{(2)},$ whose fiber at $(x,y)$ is defined by $${\sf{M}}_\rho(x,y)=\{(v,\varphi): v\in x, \varphi\in y\,\textrm{ and }\varphi(v)=1\}/(v,\varphi)\sim-(v,\varphi).$$ %The action of $\G$ on $\w F_\rho$ is defined by $$\g(x,y,(\varphi,v))=(\g x, \g y, (\varphi\circ\rho(\g)^{-1}, \rho(\g)v)).$$
The bundle $\w \UG_\rho$ is equipped with a flow ${\w\phi}^\rho=({\w\phi_t}^\rho)$  defined by $${\w\phi}_t^\rho(x,y,(v,\varphi))=(x,y,(e^tv,e^{-t}\varphi)),$$ that commutes with the natural action of $\rho(\G).$ It is a consequence of the following theorem that the action of $\rho(\G)$ on $\w\UG_\rho$ is properly discontinuous and cocompact. The induced flow $\phi^\rho$ on the quotient $\UG_\rho=\rho(\G)\/ \w \UG_\rho$ is called \emph{the geodesic flow} of $\rho.$

\begin{teo}[{Bridgeman-Canary-Labourie-S. \cite[Section 4]{presion}}]\label{teo:flujo1} Let $\rho$ be a projective Anosov representation, then there exists a $\rho$-equivariant H\"older-continuous homeomorphism $E:\w \UG_\rho\to\w\UG,$ which is an orbit equivalence for the respective geodesic flows. The geodesic flow of $\rho$ is a transitive metric Anosov flow and its stable and unstable laminations are given by (the induced on the quotient of) $$\w W^s(x_0,y_0,(v_0,\varphi_0))=\{(x_0,y,(v_0,\varphi)):y\in\Lim_\rho^*-\{x_0\},\, \varphi\in y,\, \varphi(v_0)=1\}$$ and $$\w W^u(x_0,y_0,(v_0,\varphi_0))=\{(x,y_0,(v,\varphi_0)):x\in\Lim_\rho-\{y_0\},\, v\in x,\, \varphi_0(v)=1\}.$$
\end{teo}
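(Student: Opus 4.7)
The plan is to build the orbit equivalence $E$ by identifying each $1$-dimensional fiber $\sf{M}_\rho(x,y)$ of $\w\UG_\rho$ with the $\R$-fiber of $\w\UG$ via a H\"older cocycle coming from $\rho,$ and then to derive the metric Anosov property of the quotient flow from the convex Anosov contraction on the bundle $\Xi^*\otimes\Theta.$

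First I would construct $E.$ The H\"older-continuous equivariant pair $(\xi,\xi^*)$ gives a $\G$-equivariant H\"older identification of base spaces $\bord^2\G\cong\Lim_\rho^{(2)}.$ To parameterize each fiber by $\R,$ I fix an auxiliary continuous norm on $\R^d$ and set $\tau(x,y,(v,\varphi))=\log\|v\|.$ By definition of the flow one has $\tau({\w\phi}^\rho_t(p))=\tau(p)+t,$ so $\tau$ is a global flow parameter; under $\rho(\g),$ $\tau$ is shifted by $\log\|\rho(\g)v\|-\log\|v\|,$ which is a H\"older cocycle over the $\G$-action on $\bord\G.$ Since Mineyev's construction of $\w\UG$ is flexible up to H\"older equivalence of the metric, I may modify its $\R$-coordinate by exactly this cocycle and thereby obtain the desired $\rho$-equivariant H\"older homeomorphism $E$ realizing the orbit equivalence.

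Next I would prove the metric Anosov property. Transitivity comes from the density in $\Lim_\rho^{(2)}$ of pairs $(\g_+,\g_-)$ attached to non-torsion elements (Lemma \ref{lema:proximal}), which is standard convergence-group dynamics on $\bord\G.$ For hyperbolicity (Definition \ref{defi:anosovtop}) the candidates for the local stable and unstable laminations are the sets announced in the statement. On $\w W^s(x_0,y_0,(\varphi_0,v_0))$ two points share their $x_0$- and $v_0$-coordinates and differ only in $(y,\varphi);$ the convex Anosov hypothesis, namely that $\psi_{-t}$ contracts uniformly on $\Xi^*\otimes\Theta,$ is exactly the statement that such differences shrink at exponential rate under ${\w\phi}^\rho_t,$ and by the H\"older identification of bases this transfers to exponential contraction of the underlying distance on $\w\UG_\rho.$ The same argument applied to the inverse flow gives expansion along $\w W^u.$ The local product structure (the second bullet of Definition \ref{defi:anosovtop}) follows from the transversality $\xi(y)\oplus\ker\xi^*(x)=\R^d$ for $x\neq y:$ for nearby base points $(x_1,y_1)$ and $(x_2,y_2),$ their ``bracket'' in $\Lim_\rho^{(2)}$ is $(x_1,y_2),$ and lifting to the total space using the canonical normalization $\varphi(v)=1$ determines both the transverse intersection and the flow-time offset $\nu.$ That the candidate laminations then dynamically coincide with the stable and unstable ones is a direct verification using the flow formula $(v,\varphi)\mapsto(e^tv,e^{-t}\varphi).$

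The main obstacle I expect lies in the first step: producing a H\"older cocycle that trivializes the fibers while remaining compatible with both the $\G$-equivariance and Mineyev's H\"older class of metrics on $\w\UG.$ The auxiliary norm on $\R^d$ must be chosen so that the cocycle $\log\|\rho(\g)v\|/\|v\|$ is H\"older in $x$ (not merely continuous), and all regularity estimates must interlock cleanly across the identification $\bord^2\G\cong\Lim_\rho^{(2)}$ induced by $(\xi,\xi^*).$ Once this is in hand, the remaining conclusions are essentially dynamical consequences of the convex Anosov contraction hypothesis.
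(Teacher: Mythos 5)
Your overall shape is reasonable (parametrize the line fibers by $\log\|v\|$, read the $\rho(\G)$-action as a H\"older cocycle over $\bord\G$, use the bundle contraction for the laminations); this is close in spirit to how the present paper handles the analogous Theorem \ref{teo:teoC}. But the crucial first step, as written, does not work. Saying that ``Mineyev's construction is flexible up to H\"older equivalence, so I may modify its $\R$-coordinate by exactly this cocycle'' conflates changing coordinates on $\w\UG$ with changing the $\G$-action on $\bord^2\G\times\R$. A $\G$-equivariant, fiber-preserving identification over the identity of $\bord^2\G$ would force the Gromov cocycle and your cocycle $c(\g,x)=\log\bigl(\|\rho(\g)v\|/\|v\|\bigr)$, $v\in\xi(x)$, to be cohomologous, which is false in general: their periods are the translation length $|\g|$ and $\lambda_1(\rho\g)$ respectively. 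If instead you simply rename the twisted action as ``$\w\UG$'', you have begged the question: the substantive content of the theorem includes that the $\rho(\G)$-action on $\w\UG_\rho$ is properly discontinuous and cocompact and that the quotient flow is orbit equivalent to the genuine Gromov geodesic flow, and nothing in your step establishes properness or cocompactness. The correct route through your set-up is the Reparametrizing Theorem \ref{teo:reparam} (Ledrappier's construction): one must check that the cocycle has non-negative periods and finite positive entropy, equivalently a bound $\lambda_1(\rho\g)\geq\kappa|\g|$ (cf.\ Lemma \ref{lema:positiva}), which has to be extracted from the Anosov contraction on $\Xi^*\otimes\Theta$ (beware that Proposition \ref{prop:interior} derives such bounds \emph{using} the present theorem, so you cannot quote it); only then do you get properness, cocompactness, and the H\"older orbit equivalence with $\w\UG$. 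This is also essentially what the cited source does in its Section 6 by a direct argument.

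The second part is likewise only an outline at the decisive point. The metric Anosov property is a statement about a metric on the compact quotient, so it presupposes the cocompactness you have not yet proved; and the assertion that the contraction of $\psi_{-t}$ on $\Xi^*\otimes\Theta$ ``is exactly'' exponential contraction of distances along $\w W^s$ hides the real estimate: one must identify a neighborhood of a point of $\w\UG_\rho$ with a product of the candidate local stable set, local unstable set and flow direction (using the normalization $\varphi(v)=1$ and the transversality $\xi(y)\oplus\ker\xi^*(x)=\R^d$), and then convert decay of the equivariant family of norms on the bundle into decay of the quotient distance between the specific points $(x_0,y,(v_0,\varphi))$ and $(x_0,y_0,(v_0,\varphi_0)).$ Your transitivity remark is also thin on its own, but it becomes immediate once the orbit equivalence with the (transitive) Gromov flow is actually in place. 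In short: fix the construction of $E$ via the reparametrization/entropy argument, and then carry out the local product and contraction estimates; as it stands both the properness/cocompactness and the Anosov estimates are asserted rather than proved.
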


Periodic orbits of $\phi^\rho$ are in bijective correspondence with conjugacy classes of primitive  elements of $\G$ (i.e. not a positive power of some other element in $\G$), namely, if $\g$ is such an element then its associated periodic orbit is the projection of $(\g_+,\g_-,(v,\varphi)),$ for (any) $\varphi\in\xi^*(\g_-)$ and $v\in\xi(\g_+). $

Since $\xi(\g_+)$ is the attracting line of $\rho(\g)$ (Lemma \ref{lema:proximal}), one obtains $$\g(\g_+,\g_-,(v,\varphi))=(\g_+,\g_-,( e^{\lambda_1(\rho\g)}v,e^{-\lambda_1(\rho\g)}\varphi)).$$ Consequently, the period of such periodic orbit is $\lambda_1(\rho\g).$

Hence, since the flows $\phi^\rho$ and $\phi$ are H\"older orbit equivalent, there exists a H\"older-continuous positive function
$f_\rho:\UG\to\R_+$ such that for every non-torsion $\g\in\G,$ one has $\int_\g f_\rho=\lambda_1(\rho\g).$ Such $f_\rho$ is unique up
to Liv\v sic-cohomology.

\begin{teo}[Bridgeman-Canary-Labourie-S. {\cite[Proposition 6.2]{presion}}]\label{teo:analytic0} Let $\{\rho_u:\G\to\PGL(d,\R)\}_{u\in D}$ be an analytic family\footnote[4]{We are assuming this family is far from the singular set of projective Anosov representations.} of projective Anosov representations. Then $u\mapsto [f_{\rho_u}]_L$ is analytic.
\end{teo}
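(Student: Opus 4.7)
Fix $u_0\in D$ and write $\rho_0=\rho_{u_0}$. The strategy is to realize $[f_\rho]_L$ as the image of an analytically varying H\"older cocycle under a continuous linear map. For any norm $|\cdot|$ on $\R^d$ and any lift $\w\rho(\g)\in\GL(d,\R)$ with $\det\w\rho(\g)\in\{\pm1\}$, define the cocycle $\sigma_\rho:\G\times\bord\G\to\R$ by $\sigma_\rho(\g,x)=\log(|\w\rho(\g)v|/|v|)$ for any nonzero $v\in\xi_\rho(x)$. This is well defined because $\xi_\rho(x)$ is a line, it satisfies the cocycle identity because $\w\rho(\eta)\xi_\rho(x)=\xi_\rho(\eta x)$, and by Lemma \ref{lema:proximal} its periods are $\sigma_\rho(\g,\g_+)=\lambda_1(\rho\g)$. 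The Ledrappier correspondence (as used in \cite{presion} and \cite{quantitative}) produces from any H\"older cocycle on $\bord\G$ a canonical class in $\Livsic^\alpha(\UG,\R)$ whose periods along closed orbits match the cocycle's periods; this operation is linear in the cocycle and continuous in the H\"older norm. It therefore sends $\sigma_\rho$ to $[f_\rho]_L$, and the theorem reduces to showing that $u\mapsto\sigma_{\rho_u}$ is analytic into the Banach space of H\"older cocycles over $\bord\G$.

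For this reduction, the main input is the analytic dependence $u\mapsto\xi_{\rho_u}$ into $\holder^\alpha(\bord\G,\P(\R^d))$ for some $\alpha>0$ (and analogously for $\xi^*_{\rho_u}$). The equivariant section $\xi_\rho$ is characterized as the unique fixed point of the graph transform associated to the contracting flow on $\Xi_\rho^*\otimes\Theta_\rho$ of Theorem \ref{teo:flujo1}. Since the convex Anosov condition is open, the contraction rate is uniform over a neighborhood of $\rho_0$ in the parameter space, so the linearization of the graph transform at $\xi_{\rho_0}$ is a bounded invertible operator on the H\"older section space. The analytic implicit function theorem in this Banach space then yields that the fixed point $\xi_{\rho_u}$ depends analytically on $u$. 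Once this is established, the formula $\sigma_\rho(\g,x)=\log(|\w\rho(\g)v|/|v|)$ exhibits $\sigma_\rho$ as the composition of an analytic family of linear maps $\w\rho(\g)$ with $\log|\cdot|$ evaluated on the analytic section $\xi_\rho$, from which analyticity of $u\mapsto\sigma_{\rho_u}$ in the H\"older cocycle norm follows by direct estimates controlling the H\"older constant uniformly in $\g$.

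The main obstacle is precisely the analytic, as opposed to merely $\clase^1$ or H\"older, dependence of $\xi_\rho$ on $\rho$. One must choose a Banach space setting in which simultaneously (a) the graph transform acts as an analytic map of $\rho$ and its argument, (b) the linearization at the fixed point is invertible, and (c) the output norm is the H\"older norm needed to apply the Ledrappier correspondence. Trading regularity for uniformity in the contraction rate (i.e. choosing $\alpha$ small enough) is the delicate point. Once this analytic dependence of the equivariant maps is in place, the remaining assembly --- composing with $\log|\cdot|$, passing to the cocycle space, and applying the linear Ledrappier map into $\Livsic^\alpha(\UG,\R)$ --- is a routine chain of analytic maps between Banach spaces, and gives the analyticity of $u\mapsto[f_{\rho_u}]_L$.
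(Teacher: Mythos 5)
The paper does not actually prove this statement: it is quoted verbatim from Bridgeman--Canary--Labourie--Sambarino \cite[Proposition 8.2]{presion}, so your argument has to stand on its own, and as written it has a genuine gap exactly where the cited result does its work. Your architecture is the right one and parallels the cited proof: the cocycle $\sigma_\rho(\g,x)=\log(|\w\rho(\g)v|/|v|)$, $v\in\xi_\rho(x)$, does satisfy the cocycle identity and has periods $\lambda_1(\rho\g)$, and passing through a Ledrappier-type correspondence to land in $\Livsic^\alpha(\UG,\R)$ is consistent with how this paper operates (compare Theorem \ref{teo:ledrappier} and Section \ref{section:cociclosholder}). But the step carrying all the weight --- analyticity of $u\mapsto\xi_{\rho_u}$ in a H\"older topology --- is asserted rather than proved, and your justification is circular: you characterize $\xi_\rho$ as the fixed point of a graph transform on the bundle $\Xi_\rho^*\otimes\Theta_\rho$ of Theorem \ref{teo:flujo1}, yet that bundle is itself constructed from the equivariant maps $\xi_\rho,\xi^*_\rho$ whose dependence on $u$ you are trying to control. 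The correct setting is the bundle over $\UG$ with fiber $\P(\R^d)$ attached to the flat structure of $\rho_u$, with the limit map realized as the attracting invariant section of a fiber contraction; and even in that setting, getting an implicit-function-theorem scheme that is simultaneously a contraction and analytic in $(u,F)$ on one fixed H\"older section space, with base dynamics and bundle identifications that are only H\"older, is precisely the nontrivial content. In \cite{presion} this is the separate theorem on analytic variation of limit maps, proved by a different mechanism (complexification, holomorphic dependence of attracting eigenlines at the dense set of fixed points, and uniform H\"older estimates to upgrade the topology), not by a routine fixed-point argument; your sketch in effect assumes that theorem.

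Two smaller points also need proof rather than assertion. The map $c\mapsto[f_c]_L$ is linear on Liv\v sic classes (periods add, and Liv\v sic's Theorem \ref{teo:livsic3} identifies functions with equal periods), but its continuity on a Banach space of H\"older cocycles is not automatic: the explicit formula (\ref{eq:formula1})--(\ref{eq:formula}) is nonlinear in $c$, so boundedness of the induced linear map must be argued. And one must fix a single exponent $\alpha$, valid on a whole neighborhood of $u_0$, both for the cocycles and for $\Livsic^\alpha(\UG,\R)$; you flag this as delicate but do not resolve it. None of this invalidates the overall plan, which matches the cited proof in outline; what is missing is its hardest ingredient.
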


The \emph{entropy} of $\rho$ is the topological entropy of the geodesic flow $\phi^\rho,$ and can be computed by $$h_\rho=\lim_{s\to\infty}\frac{\log\#\{[\g]\in[\G]\textrm{ non-torsion}:\lambda_1(\rho\g)\leq s\}}s.$$

\section{General Anosov
representations}\label{section:Generalanosov}

The concept of Anosov representation originated in Labourie \cite{labourie} and is further developed in Guichard-Wienhard \cite{olivieranna}.

Let $G$ be a real-algebraic semisimple Lie group. Let $K$ be a maximal compact subgroup of $G$ and $\tau$ the Cartan involution on $\frak g$ whose fixed point set is the Lie algebra of $K.$ Consider $\frak p=\{v\in\frak g: \tau v=-v\}$ and $\frak a$ a maximal abelian subspace contained in $\frak p.$

Let $\E$ be the set of roots of $\frak a$ on $\frak g,$ consider
$\frak a^+$ a closed Weyl chamber, $\E^+$ the set of positive
roots associated to $\frak a^+$ and $\Pi$ the set of simple roots
determined by $\E^+.$ To each subset $\t$ of $\Pi$ one associates
a pair of opposite parabolic subgroups $P_\t$ and
$\overline{P_\t}$ of $G,$ whose Lie algebras are, by
definition\footnote[2]{Note that we use the opposite convention
than Guichard-Wienhard \cite{olivieranna}, our $P_\t$ is their
$P_{\t^c}.$}, $$\frak p_\t=\frak g_0
\oplus\bigoplus_{\alpha\in\E^+}\frak g_\alpha\oplus
\bigoplus_{\alpha\in \<\Pi-\t\>}\frak g_{-\alpha}$$ and
$$\overline{\frak p_\t}=\frak g_0
\oplus\bigoplus_{\alpha\in\E^+}\frak g_{-\alpha}\oplus
\bigoplus_{\alpha\in \<\Pi-\t\>}\frak g_{\alpha}$$where $\<\t\>$
is the set of positive roots generated by $\t$ and $$\frak
g_\alpha=\{w\in\frak g:[v,w]=\alpha(v)w\ \forall v\in\frak a\}.$$

Let $W$ be the Weyl group of $\E$ and denote by $u_0:\frak a\to
\frak a$ the longest element in $W:$ i.e. $u_0$ is the unique
element in $W$ that sends $\frak a^+$ to $-\frak a^+.$  The
\emph{opposition involution} $\ii:\frak a\to\frak a$ is the
defined by $\ii=-u_0.$ Every parabolic subgroup is conjugated to a
unique $P_\t,$ in particular $\overline{P_\t}$ is conjugated to
$P_{\ii(\t)}$ where $$\ii(\t)=\{\alpha\circ\ii:\alpha\in\t\}.$$

Denote by $\scr F_\t=G/P_\t.$ The set $\scr F_{\ii(\t)}\times\scr F_\t$ possesses a unique open $G$-orbit, which we will denote by $\posgen_\t.$

\begin{ex}If $G=\PGL(d,\R)$ then $\frak a=\{(a_1,\ldots,a_d)\in\R^d:a_1+\cdots+a_d=0\},$ a Weyl chamber is $$\frak a^+=\{(a_1,\ldots,a_d)\in\frak a:a_1\geq\cdots\geq a_d\},$$ the set of positive roots associated to $\frak a^+$ is $\E^+=\{a\mapsto a_i-a_j:1\leq i<j\leq d\}$ and the simple roots are $\Pi=\{\sigma_i:i\in\{1,\ldots,d-1\}\}$ where $\sigma_i(a)=a_i-a_{i+1}.$ The opposition involution is $\ii(a)=(-a_d,\cdots,-a_1).$ %The parabolic group associated to $\sigma_k$ is  the stabilizer of a $k$-dimensional subspace of $\R^d,$ and is denoted by ${\sf{P}}_k.$
\noindent
The parabolic group $P_\Pi$ is the stabilizer of a complete flag, and $\posgen_\Pi$ is the space of pairs of flags in general position, i.e. $(\{V_i\},\{W_i\}) \in \posgen_\Pi$ if $V_i \oplus W_{d-i}=\R^d$ for every $i$.
\end{ex}

Let $\G$ be a word hyperbolic group and consider a representation $\rho:\G\to G.$ Consider the trivial bundle $\w\UG\times\posgen_\t,$ and extend the geodesic flow of $\G$ to this bundle by acting trivially on the second coordinate. Passing to the quotient one obtains a flow $\phi$ on the bundle $\G\/ (\w\UG\times\posgen_\t)\to\UG.$

The representation $\rho$ is \emph{ $(P_\t,G)$-Anosov} if there exists a $\rho$-equivariant section $(\xi_\t,\xi_{\ii(\t)}):\w\UG\to\posgen_\t,$ invariant under the geodesic flow of $\G$ and such that its image is a hyperbolic set for $\phi$ whose stable distribution is the tangent space to $\{\cdot\}\times \scr F_{\ii(\t)}.$

Denote by $\homa_\t(\G,G)$ the space of $(P_\t,G)$-Anosov representations of $\G.$ Labourie \cite{labourie} and Guichard-Wienhard \cite{olivieranna} proved that this is an open subset of the space $\hom(\G,G).$

From the definitions one obtains that a representation is projective Anosov if and only if it is $({\sf{P}}_1,\PGL(d,\R))$-Anosov, where ${\sf{P}}_1$ is the stabilizer of a line in $\R^d.$ This follows from the following remark (see \cite[Proposition 2.11]{presion} for a proof).

\begin{obs}\label{obs:TP}Consider a decomposition $\R^d=\ell\oplus V,$ where $\ell$ is a line and $V$ a hyperplane, then the tangent space $T_\ell\P(\R^d)$ is canonically identified with $\hom(\ell,V).$
\end{obs}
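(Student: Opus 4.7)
The plan is a two-step canonical identification. First I would establish the general (splitting-free) fact that $T_\ell\P(\R^d)\cong\hom(\ell,\R^d/\ell)$. Then I would use the decomposition $\R^d=\ell\oplus V$ to identify $\R^d/\ell\cong V$ canonically via the projection along $\ell$, which yields the desired identification $T_\ell\P(\R^d)\cong\hom(\ell,V)$.

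To prove the first step, I would represent a tangent vector at $\ell$ by a smooth curve $t\mapsto\ell(t)\in\P(\R^d)$ with $\ell(0)=\ell$, and choose any smooth lift $t\mapsto v(t)\in\R^d$ with $v(t)\in\ell(t)$ and $v(0)\neq0$. The derivative $v'(0)\in\R^d$ depends on the lift, but two lifts differ by a scalar function $f(t)$ with $f(0)=1$, and the product rule shifts $v'(0)$ by $f'(0)v(0)\in\ell$. Hence the class $[v'(0)]\in\R^d/\ell$ is well-defined, and one checks that the assignment $v(0)\mapsto [v'(0)]$ is $\R$-linear, defining an element of $\hom(\ell,\R^d/\ell)$. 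That the resulting map $T_\ell\P(\R^d)\to\hom(\ell,\R^d/\ell)$ is a linear isomorphism can be verified in any affine chart $\Phi_{v_0}:V\to\P(\R^d)$, $w\mapsto\R(v_0+w)$, where $w\in V$ is read off as the image of $v_0$ under the associated homomorphism.

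The second step is immediate: the splitting $\R^d=\ell\oplus V$ provides the projection $\pi:\R^d\to V$ with kernel $\ell$, inducing a canonical isomorphism $\R^d/\ell\xrightarrow{\sim}V$. Composing gives $T_\ell\P(\R^d)\cong\hom(\ell,V)$, intrinsically described by sending a tangent vector represented by $(\ell(t),v(t))$ to the linear map $v(0)\mapsto \pi(v'(0))$. The only real content is showing independence from all choices (of lift, of representative curve, and of nonzero generator of $\ell$), which is precisely what the computation above takes care of; there is no genuine obstacle here beyond careful bookkeeping.
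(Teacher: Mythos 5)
Your argument is correct and is the standard proof of this well-known fact; the paper does not give its own proof but cites \cite[Proposition 2.15]{presion}, so there is no internal argument to compare against. Your two-step structure (first the intrinsic identification $T_\ell\P(\R^d)\cong\hom(\ell,\R^d/\ell)$, then the splitting-dependent isomorphism $\R^d/\ell\cong V$) is clean, correctly handles the dependence on the lift via the product-rule computation, and makes explicit that the only non-canonical ingredient is the choice of complement $V$.
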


Projective Anosov representations are useful to study general Anosov representations, as Theorem \ref{teo:oaconv} below shows. Let $\{\om_\alpha\}_{\alpha\in\Pi}$ be the set of fundamental weights of $\Pi.$

\begin{prop}[Tits \cite{tits}]\label{prop:titss} For each $\alpha\in\Pi$ there exists a finite dimensional proximal\footnote[2]{I.e. $\L_\alpha(G)$ contains a proximal matrix.} irreducible representation $\L_\alpha:G\to\PGL(V_\alpha),$ such that the highest weight $\chi_\alpha$ of $\L_\alpha$ is an integer multiple of the fundamental weight $\om_\alpha.$ All other weights of $\L_\alpha$ are of the form $$\chi_\alpha-\alpha-\sum_{\beta\in\Pi} n_\beta\beta$$ where $n_\beta\in\N.$
\end{prop}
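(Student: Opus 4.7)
The plan is to obtain $\Lambda_\alpha$ by first constructing a complex irreducible representation whose highest weight is proportional to $\omega_\alpha$, and then descending it to a real proximal representation at the cost of multiplying the weight by a suitable positive integer. I would begin by passing to the complexification $G_{\C}$ with complexified Cartan $\frak a_{\C}$. The theorem of the highest weight furnishes, for every dominant integral weight $\chi$, a unique (up to isomorphism) irreducible complex representation $V_\chi$ whose highest weight subspace is one-dimensional. Since $\omega_\alpha$ lies in the weight lattice only up to a rational factor, there is a least positive integer $m_\alpha$ such that $m_\alpha\omega_\alpha$ is an integral dominant weight, giving a complex irreducible representation $\Lambda^{\C}_\alpha : G_{\C}\to\PGL(V_{m_\alpha\omega_\alpha})$ that is proximal over $\C$.

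Next I would need to descend $\Lambda^{\C}_\alpha$ to a representation defined over $\R$ while preserving proximality. One must analyze the action of complex conjugation (the Galois group $\Gal(\C/\R)$) on the set of highest weights: conjugation permutes the simple roots of $G_{\C}$ by some involution, and the highest weight $m_\alpha\omega_\alpha$ of our representation may or may not be fixed. If it is fixed and the associated invariant is trivial (i.e.\ the representation is of real type), then $V_{m_\alpha\omega_\alpha}$ carries a real structure and descends directly. In the remaining cases (quaternionic type or non-fixed highest weight), I would replace $m_\alpha\omega_\alpha$ by an appropriate multiple so that either $2m_\alpha\omega_\alpha$ lifts to a real form, or the sum over the Galois orbit $\bigoplus V_{m_\alpha\omega_\alpha}^{\sigma}$ carries a natural $\R$-structure whose underlying real vector space is a single irreducible $\R$-representation of $G$.

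The subtle step — and the main obstacle — is preserving proximality after this descent. Over $\C$ the highest weight space is a line by construction, but after restriction of scalars this line can become a two-dimensional real subspace (quaternionic case) or merge with Galois-conjugate weight spaces (non-split case), which would destroy the rank-one property of the attractor on $\P(V_\alpha)$. To handle this, I would use the fact that $G$ is assumed real-algebraic and that $\frak a\subset\frak p$ is a maximal $\R$-split Cartan subspace, so that conjugation acts trivially on the roots $\E$ of $\frak a$ (these are restricted roots, hence real). This implies that every $\frak a$-weight space in $V_{m_\alpha\omega_\alpha}$ is Galois-stable, and in particular the highest $\frak a$-weight space is a Galois-invariant complex line, which becomes a real line in the descended representation, giving the required proximality.

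Finally, I would record that $\Lambda_\alpha$ is irreducible as a real representation: any nontrivial invariant real subspace would complexify to a nontrivial invariant complex subspace of $V_{m_\alpha\omega_\alpha}$ (or of a Galois-orbit thereof), contradicting the irreducibility over $\C$. Setting $\chi_\alpha$ equal to the highest $\frak a$-weight of $\Lambda_\alpha$, which is $n_\alpha\omega_\alpha$ for the integer $n_\alpha\in\{m_\alpha,2m_\alpha\}$ selected above, completes the construction. The delicate part throughout is keeping the weight aligned with $\omega_\alpha$ while enlarging it just enough to accommodate both the integrality condition on the weight lattice and the Galois-descent condition for real-rationality, without losing the one-dimensionality of the attracting eigenline.
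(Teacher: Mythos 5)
The paper does not prove this proposition; it is cited directly to Tits's 1971 paper on irreducible rational representations over arbitrary fields, so there is no in-text argument to compare against. Judging your proposal on its own terms, the general architecture — produce a complex highest-weight module for $G_{\C}$ and descend to $\R$ via Galois theory — is the right framework, and this is indeed the spirit of Tits's approach. However, there is a genuine gap at the crucial step.

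The problem is the claim that the highest $\frak a$-weight space of $V_{m_\alpha\omega_\alpha}$ is a Galois-invariant complex \emph{line}. What is one-dimensional is the highest weight space relative to a \emph{full} Cartan $\frak h_{\C}\supset\frak a_{\C}$ of $G_{\C}$. The highest $\frak a$-weight space is the sum of all $\frak h_{\C}$-weight spaces whose weight has the same restriction to $\frak a$ as the highest weight $\chi$. When $G$ is not split, $\Sigma(\frak g_{\C},\frak h_{\C})$ contains roots that vanish on $\frak a$ (roots of the anisotropic kernel), and if $\chi-\nu$ is a weight of $V$ for some nonzero sum of positive roots $\nu$ with $\nu|_{\frak a}=0$, then the highest $\frak a$-weight space has dimension $\geq 2$ and the representation is not proximal. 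Your appeal to the reality of the restricted roots ensures Galois-stability of the $\frak a$-weight spaces, but says nothing about their dimension; this is where the argument breaks. There is a second, related imprecision in the sentence ``there is a least positive integer $m_\alpha$ such that $m_\alpha\omega_\alpha$ is an integral dominant weight'': the fundamental weight $\omega_\alpha$ lives in $\frak a^*$, not $\frak h_{\C}^*$, so one must first extend it to a dominant character of the full Cartan, and such an extension is neither unique nor automatically Galois-invariant. The actual content of Tits's theorem is precisely the assertion that one can choose the extension, and the integer multiple, so that both the $*$-action (Galois) compatibility and the one-dimensionality of the highest restricted-weight space are achieved simultaneously; that is the substance your sketch would need to supply.
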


In other words, if $g\in G$ then $\lambda_1(\L_\alpha(g))=k_\alpha\om_\alpha(\lambda(g)),$ where $\lambda:G\to\frak a^+$ is \emph{the Jordan projection} of $G.$

\begin{teo}[{Guichard-Wienhard \cite[Lemma 3.18+Theorem 4.10]{olivieranna}}]\label{teo:oaconv} Consider $\rho\in\homa_\t(\G,G),$ then for every $\alpha\in\t$ the composition $\L_\alpha\circ\rho:\G\to\PGL(V_\alpha)$ is projective Anosov.
\end{teo}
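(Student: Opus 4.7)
The plan is to construct equivariant maps $\xi\colon\bord\G\to\P(V_\alpha)$ and $\xi^*\colon\bord\G\to\P(V_\alpha^*)$ out of the Anosov section of $\rho$, verify transversality, and then transfer the hyperbolicity of the $(P_\t,G)$-Anosov structure to the past-contraction condition that defines a convex Anosov representation.

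For the equivariant maps, observe that $\alpha\in\t$ gives $P_\t\subset P_{\{\alpha\}}$, hence a $G$-equivariant projection $\pi_\alpha\colon\scr F_\t\to\scr F_{\{\alpha\}}$. Since $\chi_\alpha=k_\alpha\om_\alpha$ is supported on $\alpha$ alone, Proposition \ref{prop:titss} identifies $P_{\{\alpha\}}$ with the stabilizer in $G$ of the highest-weight line $x_\alpha^+\in\P(V_\alpha)$, giving a $G$-equivariant embedding $\iota_\alpha\colon\scr F_{\{\alpha\}}\hookrightarrow\P(V_\alpha)$. I set $\xi:=\iota_\alpha\circ\pi_\alpha\circ\xi_\t$, and define $\xi^*\colon\bord\G\to\P(V_\alpha^*)$ by the parallel construction applied to the dual representation $\L_\alpha^*$: project $\xi_{\ii(\t)}$ to $\scr F_{\{\ii(\alpha)\}}$ (available since $\ii(\alpha)\in\ii(\t)$) and then embed equivariantly as the highest-weight line of $V_\alpha^*$, whose stabilizer in $G$ is $P_{\{\ii(\alpha)\}}$. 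Transversality of $(\xi,\xi^*)$ reduces to a single check because $\posgen_\t$ is the unique open $G$-orbit on $\scr F_{\ii(\t)}\times\scr F_\t$: at the standard basepoint pair the projections yield the highest and lowest weight lines of $V_\alpha$, which pair nontrivially; by $G$-equivariance, $\xi(x)\oplus\ker\xi^*(y)=V_\alpha$ whenever $x\neq y$.

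The heart of the argument, and the step I expect to be the main obstacle, is past-contraction on
$\Xi^*\otimes\Theta\cong T_{\xi(x)}\P(V_\alpha)=\bigoplus_{\mu\neq\chi_\alpha}\hom(x_\alpha^+,V_\alpha^\mu),$
on whose $\mu$-summand the period map along a closed orbit $[\g]$ acts by the scalar $e^{(\mu-\chi_\alpha)(\lambda(\rho\g))}$. The crucial weight-theoretic input, which I would prove by a short PBW argument, is that every weight $\mu\neq\chi_\alpha$ of $V_\alpha$ satisfies $c_\alpha\geq 1$ in the expansion $\chi_\alpha-\mu=\sum_{\beta\in\Pi}c_\beta\beta$; this uses that $k_\alpha\om_\alpha$ vanishes on the coroots $\beta^\vee$ for $\beta\neq\alpha$, so the Levi subalgebra generated by $\{\frak g_{\pm\beta}\colon\beta\in\Pi,\,\beta\neq\alpha\}$ annihilates $x_\alpha^+$, and only pure lowering operators involving $\frak g_{-\alpha}$ can reach lower weights. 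Because $\alpha\in\t$, the $(P_\t,G)$-Anosov hypothesis supplies a uniform lower bound $\alpha(\lambda(\rho\g))\geq c\,p(\g)$ along periodic orbits, so $(\chi_\alpha-\mu)(\lambda(\rho\g))\geq c\,p(\g)$ uniformly in $\mu\neq\chi_\alpha$. This rate propagates (by continuity of the Anosov section and the standard extension from periodic orbits to the full flow) to uniform exponential past-contraction of the bundle $\Xi^*\otimes\Theta$. Combined with the transversality established above, this proves that $\L_\alpha\circ\rho$ is convex Anosov.
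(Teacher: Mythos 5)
The paper itself does not prove this statement; it cites Guichard--Wienhard verbatim. So there is no in-paper proof to compare against, but your reconstruction can be assessed against what the cited argument must do.

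Your overall architecture (build $(\xi,\xi^*)$ from $\xi_\t,\xi_{\ii(\t)}$ via the projections $\scr F_\t\to\scr F_{\{\alpha\}}\hookrightarrow\P(V_\alpha)$ and $\scr F_{\ii(\t)}\to\scr F_{\{\ii(\alpha)\}}\hookrightarrow\P(V_\alpha^*)$, transversality from the open orbit, then contraction) is the right one, and the weight-theoretic observation is exactly the heart of the matter. The claim that every weight $\mu\neq\chi_\alpha$ satisfies $c_\alpha\geq 1$ in $\chi_\alpha-\mu=\sum_\beta c_\beta\beta$, proved via $\langle\om_\alpha,\beta^\vee\rangle=0$ for $\beta\neq\alpha$ so that $f_\beta$ kills $x_\alpha^+$, is correct and is precisely what makes $\L_\alpha$ (rather than an arbitrary irreducible) useful here. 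Likewise, the uniform bound $\alpha(\lambda(\rho\g))\geq c\,p(\g)$ is directly available from the definition of $(P_\t,G)$-Anosov, since $\alpha\in\t$ means $\frak g_{-\alpha}$ sits in the unstable summand $T\scr F_\t$ of the normal bundle to the section.

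The weak point is the last sentence: ``this rate propagates (by continuity of the Anosov section and the standard extension from periodic orbits to the full flow) to uniform exponential past-contraction.'' This is not a standard extension. The identification $\Xi^*\otimes\Theta\cong\bigoplus_{\mu\neq\chi_\alpha}\hom(x_\alpha^+,V_\alpha^\mu)$ is canonical only along loxodromic axes, where a distinguished Cartan containing $\rho(\g)$ exists; away from periodic orbits the weight-space decomposition is a pointwise choice and is not a $\rho$-equivariant flow-invariant subbundle decomposition. Exponential rates along closed orbits do not, on their own, upgrade to uniform exponential contraction on a continuous bundle --- that upgrade requires some form of domination or a direct bundle-level argument. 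The actual content of the cited Guichard--Wienhard result is to organize $T\P(V_\alpha)|_\xi$ by the filtration (rather than the splitting) induced by $\alpha$-heights, showing each graded piece is a tensor construct of the Anosov subbundles of $T\scr F_\t$ and $T\scr F_{\ii(\t)}$, and inheriting contraction directly from the $(P_\t,G)$-Anosov hypothesis. Your sketch correctly identifies the algebraic obstruction that makes this possible, but the dynamical step that converts it into uniform contraction is not justified as written.
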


Let $$\frak a_\t=\bigcap_{\alpha\in\Pi-\t}\ker\alpha$$ be the Lie algebra of the center of the reductive group $P_\t\cap \vo{P_{\t}},$ where $\vo{P_\t}$ is the opposite parabolic group of $P_\t.$ Consider also $p_\t:\frak a\to\frak a_\t$ the only projection invariant under the group $W_\t=\{w\in W:w\textrm{ fixes pointwise }\frak a_\t\}.$ Note that, if $\alpha\in\t$ then $\om_\alpha=\om_\alpha\circ p_\t,$ (see for example Quint \cite[Lemme 2.2.3]{quint2}). Define $\lambda_\t:G\to\frak a_\t$ by $\lambda_\t=p_\t\circ\lambda.$ 

\begin{cor}\label{cor:ftheta} Consider $\rho\in\homa_\t(\G,G),$ then there exists a H\"older-continuous map $f_\rho^\t:\UG\to\frak a_\t,$ such that for every non-torsion conjugacy class $[\g]\in[\G]$ one has $$\int_{[\g]} f^\t_\rho=\lambda_\t(\rho\g).$$ Moreover, if $\{\rho_u\}_{u\in D}$ is an analytic family\footnote[4]{We are assuming this family is far from the singular set of $\homa_\t(\G,G).$} on $\homa_\t(\G,G),$ then $u\mapsto [f^\t_{\rho_u}]_L$ is analytic.
\end{cor}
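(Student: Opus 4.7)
The strategy is to reduce to the convex Anosov setting of Section \ref{section:convexA}, using the Tits representations of Proposition \ref{prop:titss}, and then assemble the resulting scalar Hölder functions into an $\frak a_\t$-valued one via a suitable basis of $\frak a_\t^*$.

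First, for each simple root $\alpha\in\t$, Theorem \ref{teo:oaconv} asserts that $\L_\alpha\circ\rho:\G\to\PGL(V_\alpha)$ is a convex Anosov representation. The discussion following Theorem \ref{teo:flujo1} then supplies a Hölder-continuous positive function $f_{\L_\alpha\circ\rho}:\UG\to\R_+$, unique up to Livšic cohomology, whose periods recover the top Jordan exponent of the composition:
$$\int_{[\g]} f_{\L_\alpha\circ\rho}=\lambda_1(\L_\alpha(\rho\g))=k_\alpha\,\om_\alpha(\lambda(\rho\g))$$
for every non-torsion $[\g]$, the second equality being the defining property of the Tits representation $\L_\alpha$. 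Rescaling, I set $g_\alpha^\rho:=f_{\L_\alpha\circ\rho}/k_\alpha$, a Hölder-continuous real-valued function with periods $\om_\alpha(\lambda(\rho\g))$.

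Now I use a standard linear-algebra fact about the root system: the family $\{\om_\alpha|_{\frak a_\t}\}_{\alpha\in\t}$ is a basis of $\frak a_\t^*$, and moreover $\om_\alpha\circ p_\t=\om_\alpha$ for every $\alpha\in\t$, because $p_\t$ is the $W_\t$-invariant projection onto $\frak a_\t$ and each such $\om_\alpha$ already vanishes on $\ker p_\t$. Let $\{v_\alpha\}_{\alpha\in\t}\subset\frak a_\t$ denote the basis dual to $\{\om_\alpha|_{\frak a_\t}\}_{\alpha\in\t}$, so that any $u\in\frak a_\t$ satisfies $u=\sum_{\alpha\in\t}\om_\alpha(u)\,v_\alpha$. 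I then define
$$f_\rho^\t:=\sum_{\alpha\in\t} g_\alpha^\rho\cdot v_\alpha:\UG\to\frak a_\t,$$
which is Hölder continuous as a finite linear combination of Hölder functions. Integrating over a periodic orbit,
$$\int_{[\g]}f_\rho^\t=\sum_{\alpha\in\t}\om_\alpha(\lambda(\rho\g))\,v_\alpha=\sum_{\alpha\in\t}\om_\alpha(p_\t\lambda(\rho\g))\,v_\alpha=p_\t\lambda(\rho\g)=\lambda_\t(\rho\g),$$
which is the claimed identity.

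For the analyticity statement, if $\{\rho_u\}_{u\in D}$ is an analytic family in $\homa_\t(\G,G)$, then $\{\L_\alpha\circ\rho_u\}_{u\in D}$ is an analytic family of convex Anosov representations for each $\alpha\in\t$, since $\L_\alpha$ is a morphism of real-algebraic groups. Theorem \ref{teo:analytic0} gives that $u\mapsto[f_{\L_\alpha\circ\rho_u}]_L$ is analytic; rescaling and taking the (finite, linear) combination above, $u\mapsto[f_{\rho_u}^\t]_L$ is analytic as well. The main thing to be careful about is the linear-algebra step identifying $\{\om_\alpha|_{\frak a_\t}\}_{\alpha\in\t}$ as a basis of $\frak a_\t^*$ satisfying $\om_\alpha\circ p_\t=\om_\alpha$; once this is in place, the rest is a packaging exercise on top of the convex Anosov results of Section \ref{section:convexA}.
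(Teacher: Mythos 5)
Your proof is correct and follows essentially the same route as the paper: reduce to the convex Anosov case via the Tits representations $\L_\alpha$, extract the scalar H\"older functions from Theorem \ref{teo:flujo1}, and recombine them into an $\frak a_\t$-valued function using the fact that $\{\om_\alpha|_{\frak a_\t}\}_{\alpha\in\t}$ is a basis of $\frak a_\t^*$ with $\om_\alpha\circ p_\t=\om_\alpha$ for $\alpha\in\t$. You simply make the dual-basis packaging and the linear-algebra justification more explicit than the paper does.
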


\begin{proof}For\footnote[3]{The first statement is proved in \cite{exponential}, under the stronger hypothesis that $\rho(\G)$ is Zariski-dense.} each $\alpha\in\t$ the representation $\L_\alpha\circ\rho$ is projective Anosov (Theorem \ref{teo:oaconv}), hence Theorem \ref{teo:flujo1} guarantees the existence of a H\"older-continuous function $f_\rho^\alpha:\UG\to\R_+$ such that for all non-torsion $\g \in \G$ one has: 
$$\int_{[\g]} f^\alpha_\rho=\lambda_1(\L_\alpha\rho(\g))=k_\alpha\om_\alpha(\lambda(\rho\g)).$$ Note that, since $\alpha\in\t$ one has $\om_\alpha(\lambda(\rho\g))=\om_\alpha(\lambda_\t(\rho\g))$ (recall $\om_\alpha=\om_\alpha\circ p_\t$), and observe that the set of fundamental weights $\{\om_\alpha\}_{\alpha\in\t}$ is a basis of $\frak a_\t^*.$ Hence, there exists $f^\t_\rho:\UG\to\frak a_\t$ such that, for all $\alpha\in\t$ one has $$k_\alpha\om_\alpha(f^\t_\rho)=f^\alpha_\rho.$$ Theorem \ref{teo:analytic0} finishes the proof.
\end{proof}

\subsection{Limit cones}

Let $\grupo$ a discrete subgroup of $G.$ The \emph{limit cone} of $\grupo$ (introduced by Benoist \cite{limite}) is the closed cone generated by $\{\lambda(g):g\in\grupo\}$ and is denoted by $\cone_\grupo.$

\begin{prop}\label{prop:interior} Consider $\rho\in\homa_\t(\G,G).$ Then $\cone_{\rho(\G)}$ does not intersect the walls $\ker\alpha$ for every $\alpha\in\t\cup\ii(\t).$
\end{prop}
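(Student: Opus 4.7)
I plan to handle the case $\alpha\in\t$ first and then deduce the case $\alpha\in\ii(\t)$ from the $\ii$-invariance of $\cone_{\rho(\G)}$. Fix $\alpha\in\t$. By Theorem \ref{teo:oaconv}, $\L_\alpha\circ\rho\colon\G\to\PGL(V_\alpha)$ is convex Anosov, so by Lemma \ref{lema:proximal} every $\L_\alpha(\rho\g)$ with $\g\in\G$ non-torsion is proximal, i.e.\ $\lambda_1(\L_\alpha\rho\g)>\lambda_2(\L_\alpha\rho\g)$. Standard highest-weight theory shows that $\chi_\alpha-\alpha$ is a weight of $\L_\alpha$ (the highest weight $\chi_\alpha=k_\alpha\om_\alpha$ satisfies $\langle\chi_\alpha,\alpha^\vee\rangle=k_\alpha>0$), so $\lambda_2(\L_\alpha\rho\g)\geq(\chi_\alpha-\alpha)(\lambda(\rho\g))$, which yields pointwise
\[\alpha(\lambda(\rho\g))\geq\lambda_1(\L_\alpha\rho\g)-\lambda_2(\L_\alpha\rho\g)>0.\]

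To promote this into a uniform inequality, I would use the defining exponential contraction $\|\psi_{-t}\|\leq Ce^{-ct}$ of the flow $\psi$ on $\Xi^*\otimes\Theta$ attached to $\L_\alpha\circ\rho$. Over the periodic orbit of $\g$ of period $T_\g$ in the Gromov geodesic flow of $\G$, the $\psi_{T_\g}$-monodromy on the fiber $\xi(\g_+)^*\otimes\ker\xi^*(\g_-)$ is (a representative of) the $\g^{-1}$-action on that space, whose eigenvalues are $e^{\lambda_1(\L_\alpha\rho\g)-\lambda_j(\L_\alpha\rho\g)}$ for $j\geq2$; the spectral radius of $\psi_{-T_\g}$ is therefore $e^{-(\lambda_1-\lambda_2)(\L_\alpha\rho\g)}$. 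Combined with the contraction, $(\lambda_1-\lambda_2)(\L_\alpha\rho\g)\geq cT_\g-\log C$, hence $\alpha(\lambda(\rho\g))\geq cT_\g-\log C$. Since the positive H\"older function $f_{\L_\alpha\rho}$ of Section \ref{section:convexA} satisfies $k_\alpha\om_\alpha(\lambda(\rho\g))=\int_\g f_{\L_\alpha\rho}\leq\|f_{\L_\alpha\rho}\|_\infty T_\g$, and running the analogous bound for the other simple roots controls $\|\lambda(\rho\g)\|_{\frak a}$ from above by a uniform multiple of $T_\g$, one obtains positive constants $c',C'$ with
\[\alpha(\lambda(\rho\g))\geq c'\|\lambda(\rho\g)\|_{\frak a}-C'\]
for every non-torsion $\g\in\G$.

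Passing to $\cone_{\rho(\G)}$ is then routine. Given $v\in\cone_{\rho(\G)}\setminus\{0\}$, write $v=\lim t_n\lambda(\rho\g_n)$ with $t_n\geq 0$. If $\|\lambda(\rho\g_n)\|$ stays bounded, hyperbolicity of $\G$ gives only finitely many conjugacy classes of given bounded translation length, so after extraction $\lambda(\rho\g_n)$ is constant and $\alpha(v)>0$ follows from the pointwise positivity above. Otherwise $\|\lambda(\rho\g_n)\|\to\infty$, and dividing the uniform inequality by $\|\lambda(\rho\g_n)\|$ and passing to the limit yields $\alpha(v)/\|v\|\geq c'>0$. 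Either way $\cone_{\rho(\G)}\cap\ker\alpha=\{0\}$. For $\alpha\in\ii(\t)$, write $\alpha=\ii\beta$ with $\beta\in\t$: the identity $\lambda(\rho\g^{-1})=\ii(\lambda(\rho\g))$ makes $\cone_{\rho(\G)}$ invariant under $\ii$, and $\ker(\ii\beta)=\ii(\ker\beta)$, so the $\beta$-case transfers.

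The chief technical obstacle is the computation in the second paragraph, identifying the holonomy of $\psi$ on a periodic orbit with the $\L_\alpha\rho(\g)$-action on $\xi(\g_+)^*\otimes\ker\xi^*(\g_-)$ and reading off its spectral radius as $e^{-(\lambda_1-\lambda_2)(\L_\alpha\rho\g)}$; once this is set up carefully, the rest is standard convex-cone and orbit-equivalence manipulation resting on Section \ref{section:convexA}.
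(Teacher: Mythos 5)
Your proposal follows essentially the same route as the paper: reduce to the convex Anosov representation $\L_\alpha\circ\rho$ via Theorem \ref{teo:oaconv}, extract a lower bound on $(\lambda_1-\lambda_2)(\L_\alpha\rho\g)$ proportional to the period $T_\g$ from the exponential contraction of $\psi$ on $\Xi^*\otimes\Theta$, pair it with an upper bound on $\lambda_1(\L_\alpha\rho\g)/T_\g$ coming from the orbit equivalence of Theorem \ref{teo:flujo1}, and then transfer to $\alpha$ via the weight relation and handle $\ii(\t)$ by $\ii$-invariance. The paper does exactly this, first in the model convex case $\rho:\G\to\PGL(d,\R)$ and then by applying that case to $\L_\alpha\circ\rho$. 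The one step in your plan that is too hand-wavy is ``running the analogous bound for the other simple roots controls $\|\lambda(\rho\g)\|_{\frak a}$ from above by a uniform multiple of $T_\g$'': you only have the convex Anosov structure, hence the reparametrizing function $f_{\L_\alpha\rho}$, for $\alpha\in\t$, and the fundamental weights indexed by $\t$ need not control the norm on $\frak a^+$ when $\t\neq\Pi$ or $G$ is not simple. The cleaner route --- which is what the paper's two displayed inequalities implicitly rely on --- is to bound $\lambda_1(\L_\alpha\rho\g)$ both above and below by constant multiples of $T_\g$ via orbit equivalence, which pins down the ratio $(\lambda_1-\lambda_2)/\lambda_1 > c/M$ on $\cone_{\L_\alpha\rho(\G)}$ and hence on $\cone_{\rho(\G)}$ via the weight relation $k_\alpha\alpha(\lambda(g))=\lambda_1(\L_\alpha g)-\lambda_2(\L_\alpha g)$; no separate control of the full norm $\|\lambda(\rho\g)\|_{\frak a}$ by $T_\g$ is needed. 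Apart from this, the argument is sound and matches the paper's.
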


\begin{ex}\label{ex:contra}The proposition is optimal in the following sense: If $\rho:\pi_1\E\to\PSO(3,1)\subset\PSL(4,\R)$ is a quasi-Fuchsian representation then it is projective Anosov. Its limit cone is the Weil chamber of the Cartan algebra of $\PSO(3,1),$ which does not intersect the walls $\ker\sigma_1$ and $\ker\sigma_3$ but is contained in the wall $\ker\sigma_2.$
\end{ex}

\begin{proof} Assume first that $\rho:\G\to\PGL(d,\R)$ is projective Anosov. We have to show that its limit cone does not intersect the walls $\ker\sigma_1$ and $\ker\sigma_{d-1}.$

Consider a non-torsion element $\g\in\G.$ Recall that if $v\in\xi(\g_+)$ then $\rho(\g)v=\pm e^{\lambda_1(\rho\g)}v,$ and that $e^{\lambda_2(\rho\g)}$ is the spectral radius of $\rho(\g)|\ker\xi^*(\g_-).$ Consider a Euclidean metric $\{\|\ \|_p\}_{p\in\UG}$ on the bundle $\Xi^*\otimes\Theta.$ This metric lifts to a $\rho$-equivariant family of norms indexed on $\w\UG,$ still denoted by $\{\|\ \|_p\}_{p\in\w\UG}.$

Consider  $p=(\g_-,\g_+,t)\in\w\UG,$ $\varphi:\xi(\g_+)\to\R$ and $w\in\ker\xi^*(\g_-),$ then  $$\|\varphi\otimes w\|_{\phi_{-n|\g|}p}\leq Ce^{-n|\g|c}\|\varphi\otimes w\|_p.$$ Since $\phi_{-n|\g|}p=\g^{-n}p$ and the norms are equivariant, one has $\|\varphi\otimes w\|_{\phi_{-n|\g|}p}=\|\rho(\g^n) \varphi\otimes w\|_p,$ consequently $$e^{n(\lambda_2(\rho\g)-\lambda_1(\rho\g))}\|\varphi\otimes w\|_p \leq Ce^{-n|\g|c}\|\varphi\otimes w\|_p.$$ Hence $$\frac{\lambda_1(\rho\g)-\lambda_2(\rho\g)}{|\g|}>c,$$ for a $c>0$ independent of $\g.$ Finally, Theorem \ref{teo:flujo1} implies the existence of $M>m>0$ such that for every non-torsion $\g\in\G$ one has $$M>\frac{\lambda_1(\rho\g)}{|\g|}>m.$$ These two equations give $\cone_{\rho(\G)}\cap\ker\sigma_1=\{0\}.$ Since $\cone_\rho$ is $\ii$-invariant and $\sigma_{d-1}=\sigma_1\circ\ii,$ we obtain $\cone_\rho\cap\ker\sigma_{d-1}=\{0\}.$

Assume now that $\rho$ is $P_\t$-Anosov. Consider $\alpha\in\t$ and recall that $\L_\alpha\circ\rho$ is projective Anosov (Theorem \ref{teo:oaconv}). The proof finishes by applying the last paragraph to $\L_\alpha\circ\rho,$ and by recalling that there exists $k_\alpha\in\N$ such that for all $g\in G$ one has $$k_\alpha\alpha(\lambda(g))=\lambda_1(\L_\alpha g)-\lambda_2(\L_\alpha g).$$\end{proof}

If $\rho\in\homa_\t(\G,G)$ more information is given on the closed cone of $\frak a_\t$ generated by $\{\lambda_\t(\rho\g):\g\in\G\}.$ Denote this cone by $\cone^\t_\rho=\cone_{f^\t_\rho}$ (where $f^\t_\rho$ is given by Corollary \ref{cor:ftheta}), denote its dual cone by $\conodual=\{\varphi\in\frak a_\t^*:\varphi|\conodual\geq0\}.$ For $\varphi\in\conodual$ define its \emph{entropy} by $$h^\varphi_\rho=\lim_{s\to\infty}\frac{\log\#\{[\g]\in[\G]\textrm{ non-torsion}:\varphi(\lambda_\t(\rho\g))\leq s\}}s.$$ The following remark is direct from Lemma \ref{lema:interior}.

\begin{obs}\label{obs:interior2} A linear form $\varphi$ belongs to $\inte\conodual$ if and only if $h_\rho^\varphi\in(0,+\infty).$
\end{obs}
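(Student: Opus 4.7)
The plan is to reduce the statement to Lemma \ref{lema:interior} applied to the H\"older-continuous map $f^\t_\rho:\UG\to\frak a_\t$ provided by Corollary \ref{cor:ftheta}, which satisfies $\int_{[\g]} f^\t_\rho=\lambda_\t(\rho\g)$ for every non-torsion $[\g]\in[\G]$.

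First I would verify that the geodesic flow $\phi$ of $\G$ on $\UG$ is a transitive metric Anosov flow, so that the framework of Section \ref{section:1} is applicable. In the convex Anosov case this is part of Theorem \ref{teo:flujo1}; for a general $P_\t$-Anosov representation one picks any $\alpha\in\t$ and transports the transitive metric Anosov flow structure on $\UG_{\L_\alpha\circ\rho}$ back to $\UG$ via the orbit equivalence $E$ given by Theorem \ref{teo:flujo1} (using Theorem \ref{teo:oaconv} to know that $\L_\alpha\circ\rho$ is convex Anosov).

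Next, for $\varphi\in\conodual=\cone_{f^\t_\rho}^*$, linearity of the period integral yields $\varphi(\lambda_\t(\rho\g))=\int_{[\g]}\varphi\circ f^\t_\rho$ for every non-torsion $[\g]$. Periodic orbits of $\phi$ correspond to primitive non-torsion conjugacy classes, while the definition of $h^\varphi_\rho$ counts all non-torsion classes; however, non-primitive classes are powers of primitive ones and their contribution to the counting is polynomial in $s$, hence absorbed after taking $\log$ and dividing by $s\to\infty$. This identifies $h^\varphi_\rho$ with the entropy $h_{\varphi\circ f^\t_\rho}$ of $\varphi\circ f^\t_\rho$ in the sense of Section \ref{section:1}.

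With these two identifications in place, Lemma \ref{lema:interior} applied to $f^\t_\rho$ states exactly that $h_{\varphi\circ f^\t_\rho}\in(0,+\infty)$ if and only if $\varphi\in\inte\cone_{f^\t_\rho}^*=\inte\conodual$, which is the desired equivalence. The only non-bookkeeping point is the verification that $\phi$ is metric Anosov; the counting argument and the final invocation of Lemma \ref{lema:interior} are then immediate.
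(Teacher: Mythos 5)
Your proof is correct and follows the same route as the paper, which simply declares the remark to be ``direct from Lemma~\ref{lema:interior}'' applied to the function $f^\t_\rho$ of Corollary~\ref{cor:ftheta}. The details you supply --- the transitive metric Anosov structure on $\UG$, the period identity $\int_{[\g]}\varphi\circ f^\t_\rho=\varphi(\lambda_\t(\rho\g))$, and the harmless discrepancy between counting primitive periodic orbits and all non-torsion conjugacy classes --- are exactly the bookkeeping the paper elides.
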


\begin{cor}\label{cor:conolimite} The function $\homa_\t(\G,G)\to\{\textrm{compact subsets of } \P(\frak a_\t)\}$ given by $\rho\mapsto\P(\cone_\rho^\t)$ is continuous. Consider $\rho_0\in\homa_\t(\G,G)$ and $\varphi\in\inte{\cone_{\rho_0}^\t}^*.$ Then the function $$\rho\mapsto h_\rho^\varphi$$ is analytic in a neighborhood $U$ of $\rho_0$ such that $\varphi\in\inte{\cone_\rho^\t}^*$ for every $\rho\in U.$

\end{cor}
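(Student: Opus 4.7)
The plan is to reduce both statements to the abstract results on Liv\v sic cohomology classes (Lemma \ref{lema:cono} and Corollary \ref{cor:entropiaanalitica}) by pre-composing with the analytic assignment $\rho\mapsto[f^\t_\rho]_L$ provided by Corollary \ref{cor:ftheta}. The whole argument is essentially a bookkeeping exercise, and most of the work is already done; the only nontrivial point is to verify that this assignment lands in the open set $\Livsic_+^\alpha(\UG,\frak a_\t)$ so that those lemmas apply.

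First I would observe that, by construction, $\cone_\rho^\t$ coincides with $\cone_{f^\t_\rho}$: indeed, periodic orbits of the geodesic flow are in bijection with non-torsion conjugacy classes, and the period of $[\g]$ for $f^\t_\rho$ is exactly $\lambda_\t(\rho\g)$. To apply Lemma \ref{lema:cono} we need $[f^\t_\rho]_L\in\Livsic_+^\alpha(\UG,\frak a_\t)$, that is, we need to exhibit some $\varphi\in\cone_{f^\t_\rho}^*$ with $h^\varphi_{f^\t_\rho}\in(0,\infty)$. For any $\alpha\in\t$, the fundamental weight $\om_\alpha$ is nonnegative on $\frak a^+$ (hence on $\cone_\rho^\t$), and Theorem \ref{teo:oaconv} together with the construction in the proof of Corollary \ref{cor:ftheta} yields
\[
k_\alpha\om_\alpha\big(\textstyle\int_{[\g]}f^\t_\rho\big)=k_\alpha\om_\alpha(\lambda_\t(\rho\g))=\lambda_1(\L_\alpha\rho(\g)),
\]
so the entropy of $k_\alpha\om_\alpha$ equals the entropy of the convex Anosov representation $\L_\alpha\circ\rho$, which is finite and positive. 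Hence $[f^\t_\rho]_L\in\Livsic_+^\alpha(\UG,\frak a_\t)$, and the first statement of the corollary follows by composing the analytic map $\rho\mapsto[f^\t_\rho]_L$ (Corollary \ref{cor:ftheta}) with the continuous map $[f]_L\mapsto\P(\cone_f)$ (Lemma \ref{lema:cono}).

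For the second statement, fix $\rho_0\in\homa_\t(\G,G)$ and $\varphi\in\inte{\cone_{\rho_0}^\t}^*=\inte\cone_{f^\t_{\rho_0}}^*$. By Corollary \ref{cor:entropiaanalitica} applied to $f_0=f^\t_{\rho_0}$, there is a neighborhood $\cal U$ of $[f^\t_{\rho_0}]_L$ in $\Livsic_+^\alpha(\UG,\frak a_\t)$ on which $\varphi\in\inte\cone_f^*$ and $f\mapsto h_f^\varphi$ is analytic. Pulling $\cal U$ back through the analytic map $\rho\mapsto[f^\t_\rho]_L$ gives an open neighborhood $U$ of $\rho_0$ in $\homa_\t(\G,G)$ on which $\varphi\in\inte(\cone_\rho^\t)^*$ and the composition $\rho\mapsto h^\varphi_\rho=h^\varphi_{f^\t_\rho}$ is analytic.

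The only conceptual obstacle is the verification that $[f^\t_\rho]_L$ lies in $\Livsic_+^\alpha$; it is precisely this nondegeneracy of the limit cone which encodes the Anosov property at the level of periods, and without it neither Lemma \ref{lema:cono} nor Corollary \ref{cor:entropiaanalitica} applies. Once this is granted, both conclusions are immediate consequences of the analyticity of $\rho\mapsto[f^\t_\rho]_L$.
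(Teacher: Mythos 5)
Your proof is correct and follows the same route as the paper's own (extremely terse) proof, which simply cites Corollary \ref{cor:ftheta}, Lemma \ref{lema:cono}, and Corollary \ref{cor:entropiaanalitica} and leaves the composition to the reader. You usefully make explicit the one point the paper leaves implicit, namely that $[f^\t_\rho]_L$ lands in $\Livsic_+^\alpha(\UG,\frak a_\t)$, verified via the fundamental weights $\om_\alpha$ and the finiteness and positivity of the entropy of the convex Anosov representations $\L_\alpha\circ\rho$.
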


\begin{proof} Follows from Corollary \ref{cor:ftheta}, Lemma \ref{lema:cono} and Corollary \ref{cor:entropiaanalitica}.
\end{proof}

%We will now restrict to subsets of $\Pi$ which are $\ii$-invariant. Guichard-Wienhard \cite[Lemma 3.18]{olivieranna} state that if $\rho$ is $P_\t$-Anosov then it is $P_{\t\cup\ii(\t)}$-Anosov\footnote[2]{Recall we use the opposite notation for $P_\t.$}, so this is not a real restriction, except that it changes the vector spaces we are working on and gives more information. Assume from now on that $\t=\ii(\t).$

We say that $\rho\in\homa_\t(\G,G)$ is \emph{non-arithmetic on} $\frak a_\t$ if the group generated by $\{\lambda_\t(\rho\g):\g\in\G\}$ is dense in $\frak a_\t.$ In the language of section \ref{section:1}, this is to say that the function $f_\rho^\t$ is non-arithmetic on $\frak a_\t.$

\begin{obs}\label{obs:non-arithmetic} Benoist's theorem \cite[Main theorem]{benoist2} asserts that if $\grupo$ is a Zariski-dense subgroup of $G,$ then the group generated by $\{\lambda(g):g\in\grupo\}$ is dense in $\frak a.$ Hence, if $\rho\in\homa_\t(\G,G)$ is Zariski-dense, then it is non-arithmetic on $\frak a_\t.$
\end{obs}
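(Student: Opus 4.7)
The plan is to reduce the claim directly to Benoist's density theorem cited in the statement, via the projection $p_\t:\frak a\to\frak a_\t$.

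First, I would invoke the hypothesis together with Benoist's main theorem from \cite{benoist2}: since $\rho(\G)$ is Zariski-dense in $G$, the additive subgroup $\Lambda\subset\frak a$ generated by the Jordan projections $\{\lambda(\rho\g):\g\in\G\}$ is dense in $\frak a$. This is the content of Benoist's theorem applied to the discrete Zariski-dense subgroup $\rho(\G)$.

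Next, recall that $\lambda_\t=p_\t\circ\lambda$, where $p_\t:\frak a\to\frak a_\t$ is the linear projection defined just before Corollary \ref{cor:ftheta}. Being a linear map, $p_\t$ is in particular a continuous homomorphism of additive topological groups, so the image $p_\t(\Lambda)$ coincides with the additive subgroup of $\frak a_\t$ generated by $\{\lambda_\t(\rho\g):\g\in\G\}$. Because $p_\t$ is a surjective continuous map onto $\frak a_\t$, the image of any dense subset of $\frak a$ is dense in $\frak a_\t$; hence $p_\t(\Lambda)$ is dense in $\frak a_\t$.

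By the definition of non-arithmeticity on $\frak a_\t$ given in the paragraph preceding the remark, this density is exactly the assertion that $\rho$ is non-arithmetic on $\frak a_\t$, completing the proof. There is no genuine obstacle here: the only point to verify is that $p_\t$ is surjective, which is immediate from its construction as the $W_\t$-invariant linear projection of $\frak a$ onto its subspace $\frak a_\t$.
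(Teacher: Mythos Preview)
Your argument is correct and matches the paper's approach: the paper states this as a remark with no proof beyond the word ``Hence,'' leaving implicit precisely the step you spell out, namely pushing Benoist's dense subgroup of $\frak a$ forward through the continuous surjection $p_\t$ to obtain density in $\frak a_\t$.
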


%As in example \ref{ex:contra}, quasi-Fuchsian representations provide an 'optimality' for the last remark:

If $\rho\in\homa_\t(\G,G)$ denote by $\D_\rho^\t=\D_{f^\t_\rho}.$ The following is a direct consequence of Proposition \ref{prop:convexo}.

\begin{prop}\label{prop:convexoreps} Consider $\rho\in\homa_\t(\G,G),$ then the set  $$\bord\D_\rho^\t=\{\varphi\in\conodual:h_\rho^\varphi=1\},$$ is a codimension 1 closed analytic submanifold of $\frak a_\t^*.$ If moreover $\rho$ is non-arithmetic on $\frak a_\t,$ then the set $\D_\rho^\t=\{\varphi\in\conodual:h_\rho^\varphi\leq1\}$ is strictly convex.
\end{prop}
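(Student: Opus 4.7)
The plan is to apply Proposition \ref{prop:convexo} to the vector-valued H\"older cocycle $f_\rho^\t:\UG\to\frak a_\t$ supplied by Corollary \ref{cor:ftheta}, with $X=\UG$, $\phi$ the Gromov geodesic flow of $\G$, and $V=\frak a_\t$. For this to be legitimate I first need to know that $\phi$ is a transitive metric Anosov flow: this is built into our setting, since Theorem \ref{teo:flujo1} produces, for any convex Anosov representation (e.g.\ $\L_\alpha\circ\rho$ for $\alpha\in\t$), an orbit equivalence $E$ between $\phi^\rho$ and $\phi$ through which the transitive metric Anosov structure is transferred back to $\UG$.

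Once we are in the setup of Section \ref{section:1}, the proof is purely a translation of definitions. By Corollary \ref{cor:ftheta} the period of $f_\rho^\t$ along the conjugacy class of a non-torsion $\g\in\G$ is $\lambda_\t(\rho\g)$, so the cone $\cone_{f_\rho^\t}$ generated by the periods of $f_\rho^\t$ is by construction $\cone_\rho^\t$, and its dual cone equals $\conodual$. For any $\varphi\in\conodual$, the real-valued H\"older function $\varphi\circ f_\rho^\t$ has periods $\varphi(\lambda_\t(\rho\g))$, hence its scalar entropy $h_{\varphi\circ f_\rho^\t}$ coincides with the entropy $h_\rho^\varphi$ defined from $\rho$. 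Consequently $\D_{f_\rho^\t}=\D_\rho^\t$ and $\bord\D_{f_\rho^\t}=\bord\D_\rho^\t$.

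With this identification, Proposition \ref{prop:convexo} applied to $f_\rho^\t$ says exactly that $\bord\D_\rho^\t=\{\varphi\in\conodual:h_\rho^\varphi=1\}$ is a codimension $1$ closed analytic submanifold of $\frak a_\t^*$, which is the first assertion. For the second assertion, the definition recalled just above Remark \ref{obs:non-arithmetic} says that $\rho$ is non-arithmetic on $\frak a_\t$ precisely when $f_\rho^\t$ is non-arithmetic on $\frak a_\t$ in the sense of Section \ref{section:1}; the strict convexity clause of Proposition \ref{prop:convexo} then yields that $\D_\rho^\t$ is strictly convex, completing the proof.

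There is no substantive obstacle: the only point of care is verifying that the flow on $\UG$ and the cocycle $f_\rho^\t$ satisfy the hypotheses of Proposition \ref{prop:convexo}, which is handled by Theorem \ref{teo:flujo1} and Corollary \ref{cor:ftheta} respectively. Everything else is a tautological rewriting of cones, duals, and entropies in the two parallel languages.
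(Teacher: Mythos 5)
Your proof is correct and is essentially the paper's proof unpacked: the paper literally defines $\D_\rho^\t=\D_{f^\t_\rho}$ and declares the proposition "a direct consequence of Proposition \ref{prop:convexo}", and you have simply verified the dictionary (periods of $f_\rho^\t$ giving $\cone_\rho^\t$ and $\conodual$, entropies matching via $\varphi\circ f_\rho^\t$, non-arithmeticity of $\rho$ on $\frak a_\t$ being by definition non-arithmeticity of $f_\rho^\t$). One small remark: to justify that the ambient flow on $\UG$ is a transitive metric Anosov flow you appeal to the orbit equivalence $E$ of Theorem \ref{teo:flujo1}, which works but is slightly roundabout --- the more direct point, implicit throughout the paper's Section \ref{section:Generalanosov}, is that the Gromov/Mineyev geodesic flow on $\UG$ is itself a transitive metric Anosov flow by construction, so $f_\rho^\t$ lives in exactly the setting of Section \ref{section:1} without any transfer.
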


\section{The $i$-th eigenvalue}\label{section:i}

Let $\E$ be a closed orientable surface of genus $\geq2$ and
denote by $\G=\pi_1\E.$ Consider a $P_\Pi$-Anosov representation
$\rho:\G\to\PSL(d,\R)$ and denote by $\z:\bord\G\to\scr F$ its
equivariant map. We will say that $\z$ is a \emph{Frenet curve} if
for every decomposition $n=d_1+\cdots+d_k\leq d$ ($d_i\in\N$), and
$x_1,\ldots,x_k\in\bord\G$ pairwise distinct, one has that the
spaces $\z_{d_i}(x_i)$ are in direct sum, and moreover
$$\lim_{(x_i)\to x}\bigoplus_1^k\z_{d_i}(x_i)=\z_n(x),$$ where
$\z_i(x)$ is the $i$-dimensional space of the flag $\z(x).$

\begin{teo}[{Labourie \cite[Theorems 4.1 and 4.2]{labourie}}]\label{teo:hitchinFrenet} Consider $\rho\in\Hitchin(\E,d),$ then $\rho$ is $P_\Pi$-Anosov and $\z$ is a Frenet curve.
\end{teo}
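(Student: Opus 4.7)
The plan is a connectedness argument on $\Hitchin(\E,d)$. Since this moduli space is connected and by definition contains Fuchsian representations, it suffices to show that the set $H=\{\rho\in\Hitchin(\E,d):\rho\text{ is }P_\Pi\text{-Anosov with Frenet equivariant flag curve}\}$ is non-empty, open and closed. Non-emptiness is verified at the Fuchsian locus: for $\rho_0=\tau_d\circ j$ with $j:\pi_1\E\to\PSL(2,\R)$ Fuchsian, identify $\R^d$ with the symmetric power $S^{d-1}(\R^2)$ and set $\z(x)$ to be the osculating flag at $\xi(x)$ of the rational normal curve, where $\xi$ is the boundary map of $j$. The classical multi-transversality of osculating subspaces of the rational normal curve at pairwise distinct points is exactly the Frenet condition, and the explicit eigenvalue formula $\lambda(\tau_dg)=\frac{|g|}{2}(d-1,d-3,\ldots,1-d)$ yields uniform simple-root gaps, so the $P_\Pi$-Anosov property follows by the Guichard--Wienhard Cartan-gap criterion.

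For openness, I would use that the $P_\Pi$-Anosov property is open in $\hom(\pi_1\E,\PSL(d,\R))$ and that the equivariant map $\z_\rho$ depends continuously on $\rho$ in the $C^0$ topology. The Frenet condition can be written as the non-vanishing of finitely many continuous determinantal expressions on the compact configuration space of tuples in $\bord\pi_1\E$ with pairwise distinct entries, hence persists under small perturbations of $\z_\rho$.

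Closedness is the main obstacle. Given $\rho_n\to\rho$ in $\Hitchin(\E,d)$ with each $\rho_n\in H$, uniform Hölder bounds from the Anosov contraction rates (uniform on compact subsets of $\homa_\Pi(\pi_1\E,\PSL(d,\R))$) together with Arzelà--Ascoli produce a subsequential uniform limit $\z$ of $\z_{\rho_n}$, which is automatically $\rho$-equivariant. The danger is that Frenet configurations can degenerate in the limit: a priori one could find distinct $x_i$ with $\z_{d_i}(x_i)$ no longer in direct sum. To exclude this I would extract quantitative transversality estimates from the dynamical separation of flag directions along the sequence — the Cartan gaps $\sigma_i(\lambda(\rho_n\g))/|\g|$ stay uniformly positive in $n$ and $\g$ by compactness — and combine them with the intersection-line construction $\ell_i(x,y)=\z_i(x)\cap\z_{d-i+1}(y)$ (the same construction used later in the excerpt for $\Lim_\rho^i$), which both recovers the Jordan decomposition of $\rho(\g)$ and furnishes the uniform spectral gaps needed to promote $\rho$ to a $P_\Pi$-Anosov representation with Frenet curve $\z$.

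The hardest point of the argument is precisely this closedness step, i.e. ruling out the degeneration of multi-transversality in the limit. Without quantitative dynamical control, or an orientation/positivity structure on the Hitchin component (Labourie's route), the Frenet property need not survive under limits: the Anosov property alone yields transversality of $\z_1$ and $\z_{d-1}$ but not of all intermediate dimensions at coincidentally placed boundary points. Therefore the crux of the proof is a quantitative incompressibility of Frenet configurations along the Hitchin family, something which the Hitchin hypothesis (and not arbitrary Anosov deformation) makes available.
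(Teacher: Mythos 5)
This statement is quoted by the paper, not proved: it is Labourie's theorem (Theorems 4.1 and 4.2 of \cite{labourie}), and the paper invokes it as an external black box. So there is no internal proof to compare against; the right question is whether your sketch would actually deliver the result, and here there are two genuine gaps.

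First, the openness step. You claim the Frenet condition is ``the non-vanishing of finitely many continuous determinantal expressions on the compact configuration space of tuples in $\bord\pi_1\E$ with pairwise distinct entries.'' But the space of tuples with pairwise distinct entries in $\bord\pi_1\E$ is an \emph{open} subset of $(\bord\pi_1\E)^k$, not a compact one: points are allowed to collide, and a positive lower bound for the determinants on the open set does not follow from non-vanishing. Worse, the Frenet condition is not a pure transversality condition at distinct tuples; it also requires the convergence $\lim_{(x_i)\to x}\bigoplus\z_{d_i}(x_i)=\z_n(x)$ at coinciding points. That limit clause is exactly what prevents the Frenet property from being a naive open condition, and your openness argument says nothing about it. One can have a continuous family of maps all transverse at distinct points whose limit fails to exist or fails to match the flag in the collision.

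Second, the closedness step, which you yourself flag as ``the main obstacle'' and ``the crux.'' Your sketch stops at saying one needs ``quantitative transversality estimates'' and a ``quantitative incompressibility of Frenet configurations,'' and that the Hitchin hypothesis ``makes this available.'' That is an accurate diagnosis of where the difficulty lies, but it is not an argument. The $P_\Pi$-Anosov property gives a uniform gap between $\lambda_1$ and $\lambda_2$ (and dually $\lambda_{d-1}$ and $\lambda_d$), which controls the transversality of $\z_1(x)$ against $\z_{d-1}(y)$; it does not by itself control the multi-transversality of intermediate subspaces $\z_{d_1}(x_1),\ldots,\z_{d_k}(x_k)$ for several collapsing points. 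That is precisely the content of Labourie's hyperconvexity/Property (H) analysis, which occupies the bulk of his paper. Without that input, or an independent positivity argument (Fock--Goncharov), the limit of Frenet curves along a sequence in $H$ need not be Frenet, and the closedness step genuinely fails. So your outline correctly identifies the hard point but does not close it, and the openness step as written is incorrect as well.
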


There is a nice converse to this statement due to Guichard \cite{guichard}.

Denote by $\grassman_k(\R^d)$ the Grassmanian of $k$-dimensional
subspaces of $\R^d$. The Frenet condition implies
that if $d_1+d_2\leq d$ where $d_1,d_2\in\N,$ then the function
$\vo\z=\vo\z_{d_1,d_2}:(\bord\G)^2\to\grassman_{d_1+d_2}(\R^d)$
defined by

\begin{equation}\label{equation:zbarra}\vo\z(x,y)=\left\{\begin{array}{cc}\z_{d_1}(x)\oplus\z_{d_2}(y)\textrm{
if }x\neq y\\ \z_{d_1+d_2}(x)\textrm{ if
}x=y\end{array}\right.\end{equation} is (uniformly) continuous.

Labourie \cite{labourie} actually provides an even stronger transversality condition which he calls Property (H): given $x,y,z\in \bord\pi_1\E$ pairwise distinct then for every $i\in\{1,\ldots,d\}$ one has $$\z_{d-i+1}(y)\oplus(\z_{d-i+1}(z)\cap\z_i(x))\oplus \z_{i-2}(x)=\R^d.$$ By combining \cite[Proposition 8.2, Lemma 8.4, Lemma 9.1]{labourie} one obtains:

\begin{teo}[Labourie \cite{labourie}]\label{teo:H} The Frenet curve of a Hitchin representation verifies \emph{Property (H)}.
\end{teo}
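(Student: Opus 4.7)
The plan is to prove Property (H) by a dimension count combined with repeated use of the Frenet transversality condition, reducing the problem to a question about three lines in a $2$-dimensional quotient. Throughout, write $L:=\z_{d-i+1}(z)\cap\z_i(x)$ and $L':=\z_{d-i+1}(y)\cap\z_i(x)$.

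\textbf{Step 1 (dimensions).} Since $x\neq z$, the Frenet condition applied to the decomposition $d=i+(d-i)$ gives $\z_i(x)\oplus\z_{d-i}(z)=\R^d$. Because $\z_{d-i}(z)$ has codimension one inside $\z_{d-i+1}(z)$, a dimension count yields $\dim L=1$, and symmetrically $\dim L'=1$. The three summands on the left of Property (H) then have dimensions summing to $(d-i+1)+1+(i-2)=d$, so it suffices to show the sum is direct.

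\textbf{Step 2 (transversality with the subflag at $x$).} Applying Frenet to $x\neq z$ with the decomposition $d=(i-1)+(d-i+1)$ gives $\z_{i-1}(x)\oplus\z_{d-i+1}(z)=\R^d$, hence $L\cap\z_{i-1}(x)=\{0\}$; in particular $L\not\subset\z_{i-2}(x)$, so $L\oplus\z_{i-2}(x)$ is an $(i-1)$-dimensional subspace of $\z_i(x)$. Replacing $z$ by $y$ gives the analogous statement $L'\not\subset\z_{i-1}(x)$.

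\textbf{Step 3 (reduction to a $2$-dimensional picture).} Suppose $v$ lies in $\z_{d-i+1}(y)\cap(L+\z_{i-2}(x))$. Since $L+\z_{i-2}(x)\subset\z_i(x)$, $v$ lies in $\z_i(x)\cap\z_{d-i+1}(y)=L'$. Thus the sum is direct as soon as $L'\not\subset L+\z_{i-2}(x)$. Equivalently, in the $2$-dimensional quotient $V:=\z_i(x)/\z_{i-2}(x)$, the three lines obtained from the images of $\z_{i-1}(x)$, $L$ and $L'$ must all be distinct: the first two and first/third pairs are handled by Step~2, and the remaining task is to separate $[L]$ from $[L']$ in $\P(V)$.

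\textbf{Step 4 (distinctness of $[L]$ and $[L']$: the main obstacle).} This is the delicate point. I would consider the continuous map
\[
\Psi_x:\bord\G\setminus\{x\}\longrightarrow\P(V),\qquad w\longmapsto [\,\z_i(x)\cap\z_{d-i+1}(w)\,],
\]
which by Step~2 avoids the image of $\z_{i-1}(x)$, hence takes values in an open subset of $\P(V)$ homeomorphic to~$\R$. The task is to show $\Psi_x$ is injective. The key tool is the full strength of the Frenet condition (\ref{equation:zbarra}): applied to any decomposition $d=d_1+\cdots+d_k$ with arbitrary clusters of points, it forces higher-order transversalities of the flags $\z(w)$ as $w$ varies. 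Specifically, if $\Psi_x(y)=\Psi_x(z)$ for distinct $y\neq z$, then by taking a further point $w$ and using continuity of $\vo\z_{d_1,d_2}$ at the diagonal (equation (\ref{equation:zbarra})) one obtains two competing limits for the flag at $x$, contradicting the Frenet direct sum for $\z(x)=\z_1(x)\oplus\z_1'\oplus\cdots$. The precise deduction involves the Frenet transversality for the pair $(\z_{i-2}(x),\z_1(y),\z_1(z),\z_{d-i}(x))$, which is where I expect the bulk of the technical work to lie.

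Once Step~4 is established, the proof of Property (H) is complete by combining Steps~1--3 for any choice of pairwise distinct $x,y,z\in\bord\G$ and any $i\in\{1,\ldots,d\}$.
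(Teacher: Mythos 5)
The paper does not give a proof of this statement: it cites Labourie \cite{labourie} (Proposition 8.2, Lemma 8.4, Lemma 9.1) and asserts that combining them yields Property~(H). There is therefore no paper argument to compare against, and your attempt must stand on its own merits.

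Steps 1--3 are correct and constitute a clean reduction. Writing $L=\z_{d-i+1}(z)\cap\z_i(x)$, $L'=\z_{d-i+1}(y)\cap\z_i(x)$ and $V=\z_i(x)/\z_{i-2}(x)$, you show that Property~(H) for the triple $(x,y,z)$ and index $i$ is equivalent to $[L]\neq[L']$ in $\P(V)$, i.e.\ to injectivity of $\Psi_x$. Step~4, however, is a genuine gap, and you acknowledge as much. What you sketch there does not close: the decomposition $\z(x)=\z_1(x)\oplus\z_1'\oplus\cdots$ that you propose to contradict presupposes that $\z_1$ is $\clase^\infty$, which a Hitchin Frenet curve generically is \emph{not} (this is precisely the content of Theorem~D of this paper; in general $\z_1$ is only H\"older, and the Frenet condition supplies limits at the diagonal, not derivatives). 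Moreover, the hypothesis $\Psi_x(y)=\Psi_x(z)$ is a coincidence statement about intersections of flag subspaces, whereas the hyperconvex Frenet property supplies only direct sums for collections of flag subspaces of total dimension at most $d$, together with the diagonal limit; nothing in your sketch explains how the hypothesized coincidence violates one of these direct sums, and the quadruple $(\z_{i-2}(x),\z_1(y),\z_1(z),\z_{d-i}(x))$ you name does not obviously interact with it. Injectivity of $\Psi_x$ \emph{is} Property~(H); it is exactly what Labourie's cited lemmas establish through a multi-step argument. So the work you defer to Step~4 is the entire theorem, not a technical loose end, and the proof as written is missing its main ingredient.
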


For each $i\in\{1,\ldots, d\}$ consider the map
$\ell_i:\bord^{2}\G\to\P(\R^d)$ defined by
$$\ell_i(x,y)=\z_i(x)\cap\z_{d-i+1}(y).$$

\begin{figure}[ht]\begin{center}
\input{elli.pstex_t}
\caption{\small{The $i$-th eigenvalue}}\label{figure-elli}
\end{center}\end{figure}

With this definition, Property (H) can be expressed as follows: For $x,z,t\in \bord \pi_1\E$ pairwise distinct one has: 

$$ \z_{d-i+1}(t) \oplus \ell_i(x,z) \oplus \z_{i-2}(x) = \R^d $$

\begin{obs} Note that each $\ell_i$ is H\"older-continuous and that for all non-torsion
$\g\in\G,$ the line $\ell_i(\g_+,\g_-)$ is the eigenline of
$\rho(\g)$ whose associated eigenvalue has modulus
$e^{\lambda_i(\rho\g)}.$ Observe also that $\ell_1(x,y)=\z_1(x)$
only depends on $x.$
\end{obs}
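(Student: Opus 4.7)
The third assertion is immediate: $\z(y)$ being a complete flag means $\z_d(y)=\R^d,$ so $\ell_1(x,y)=\z_1(x)\cap\R^d=\z_1(x).$

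For the second assertion I would proceed in two steps. First I apply the Frenet condition to the distinct points $\g_+,\g_-$ with $(d_1,d_2)=(i,d-i)$ to obtain $\z_i(\g_+)\oplus\z_{d-i}(\g_-)=\R^d.$ Since $\z_{d-i+1}(\g_-)$ strictly contains $\z_{d-i}(\g_-),$ a dimension count then forces $\ell_i(\g_+,\g_-)=\z_i(\g_+)\cap\z_{d-i+1}(\g_-)$ to be exactly one-dimensional. By equivariance of $\z$ together with the fact that $\rho(\g)$ fixes both $\g_+$ and $\g_-,$ each $\z_j(\g_\pm)$ is $\rho(\g)$-invariant, hence $\ell_i(\g_+,\g_-)$ is a $\rho(\g)$-invariant line and one obtains an invariant decomposition $\R^d=\bigoplus_{j=1}^d\ell_j(\g_+,\g_-).$ Second, since $\rho$ is $P_\Pi$-Anosov by Theorem~\ref{teo:hitchinFrenet}, the flag $\z(\g_+)$ is the attracting fixed flag of $\rho(\g)$ in $\scr F;$ equivalently $\z_i(\g_+)$ is the attracting fixed point in $\grassman_i(\R^d).$ This forces the eigenvalues of $\rho(\g)|\z_i(\g_+)$ to be the $i$ largest in modulus, so the eigenvalue associated to the invariant line $\ell_i(\g_+,\g_-)\subset\z_i(\g_+)$ (which projects isomorphically onto the quotient $\z_i(\g_+)/\z_{i-1}(\g_+)$) has modulus $e^{\lambda_i(\rho\g)}.$

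For the H\"older regularity I would exhibit $\ell_i$ as a composition of a H\"older map with an analytic one. H\"older continuity of $\z:\bord\G\to\scr F$ is a standard feature of Anosov equivariant boundary maps and is used throughout the paper. On the open subset of $\grassman_i(\R^d)\times\grassman_{d-i+1}(\R^d)$ where two subspaces meet in the minimal possible dimension, the intersection map $(V,W)\mapsto V\cap W\in\P(\R^d)$ is real-analytic; the Frenet property at distinct $x\neq y$ (applied as in the preceding paragraph) guarantees that $(\z_i(x),\z_{d-i+1}(y))$ always belongs to this open set. The main point to verify, and the only delicate aspect of this step, is that the intersection map admits uniform Lipschitz bounds on the image of $\bord^2\G$ inside the product of Grassmannians; this follows from cocompactness of $\G$ acting on $\bord^2\G$ combined with $\G$-equivariance of all the data.
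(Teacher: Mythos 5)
Your argument for the third assertion is immediate and correct. Your argument for the second assertion is also correct and is the intended one: the Frenet sum $\z_i(\g_+)\oplus\z_{d-i}(\g_-)=\R^d$ together with the inclusion $\z_{d-i}(\g_-)\subset\z_{d-i+1}(\g_-)$ forces $\ell_i(\g_+,\g_-)$ to be a line, equivariance makes it $\rho(\g)$-invariant, the Frenet condition $\z_{i-1}(\g_+)\oplus\z_{d-i+1}(\g_-)=\R^d$ makes $\ell_i(\g_+,\g_-)$ a lift of the quotient $\z_i(\g_+)/\z_{i-1}(\g_+)$, and the fact that $\z(\g_+)$ is the attracting flag of the (purely loxodromic, hence diagonalizable with distinct moduli) matrix $\rho(\g)$ then identifies the associated modulus as $e^{\lambda_i(\rho\g)}$. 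The paper does not spell this out, treating the remark as an observation, but your reasoning matches the argument implied in the introduction.

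The only place to flag is the final step of your H\"older argument. The reduction to H\"older continuity of $\z$ together with real-analyticity of the transverse intersection map is exactly right, and the Frenet property does guarantee that the image of any \emph{compact} subset of $\bord^2\G$ lies in a compact subset of the transversality locus, giving H\"older bounds there. However, the proposed upgrade to a uniform bound via ``cocompactness of $\G$ acting on $\bord^2\G$ plus equivariance'' does not go through: $\G$ does not act on $\bord^2\G$ by isometries with respect to a fixed visual metric, so $\G$-equivariance does not transport Lipschitz constants, and moreover as $(x,y)$ approaches the diagonal the pair $(\z_i(x),\z_{d-i+1}(y))$ degenerates to the boundary of the transversality locus (indeed $\z_i(x)\subset\z_{d-i+1}(x)$ when $i\le (d+1)/2$), so the Lipschitz constant of the intersection map genuinely blows up and $\ell_i$ is not uniformly H\"older on $\bord^2\G$. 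What is true, and all the paper requires, is that $\ell_i$ is H\"older on compact subsets of $\bord^2\G$, which follows directly from your first two sentences in that paragraph without any further input.
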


For $i\in\{2,\ldots,d-1\}$ let
$$E^u_i(x,y)=\hom(\ell_i(x,y),\ell_{i-1}(x,y))$$ and
$$E^s_i(x,y)=\hom(\ell_i(x,y),\ell_{i+1}(x,y)).$$
Notice that these bundles are H\"older-continuous on both variables. The purpose of this section is to prove the following proposition.

\begin{prop}\label{prop:ith-eigenvalue}
Consider $\rho\in\Hitchin(\E,d)$ and 
$2\leq i\leq d/2,$ then the space
$$\Lim^i_\rho=\{\ell_i(x,y):(x,y)\in\bord^2\G\}$$ is a
$\clase^{1+\alpha}$ submanifold of $\P(\R^d).$ The tangent space
to $\Lim^i_\rho$ at $\ell_i(x,y)$ is canonically identified with
$E^u_i(x,y)\oplus E^s_i(x,y).$
\end{prop}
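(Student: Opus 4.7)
The plan is to construct, for each $(x_0,y_0)\in\bord^2\G$, a $C^{1+\alpha}$ local chart for $\Lim^i_\rho$ near $\ell_i(x_0,y_0)$, showing that $\ell_i:\bord^2\G\to\P(\R^d)$ is a $C^{1+\alpha}$ local immersion whose differential at $(x_0,y_0)$ has image $E^u_i(x_0,y_0)\oplus E^s_i(x_0,y_0)$. The map $\ell_i$ need not be globally injective (the $d=3$ example shows it can be a double cover), so local immersivity is all that is needed to conclude that the image is a $C^{1+\alpha}$ submanifold.

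I would factor $\ell_i$ as $(x,y)\mapsto(\z_i(x),\z_{d-i+1}(y))$ composed with the intersection map on Grassmannians; this intersection map is algebraic (hence smooth) on the transversal stratum, and the Frenet direct-sum property guarantees transversality of $(\z_i(x_0),\z_{d-i+1}(y_0))$ and nearby pairs. This reduces regularity of $\ell_i$ to regularity of the Grassmannian-valued equivariant curves $\z_i$ and $\z_{d-i+1}$, both of which sit as Plücker projections of the limit sets of the convex Anosov representations $\Lambda_{\sigma_i}\circ\rho$ and $\Lambda_{\sigma_{d-i}}\circ\rho$ guaranteed by Theorem \ref{teo:oaconv}.

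For the tangent space identification I fix $y=y_0$ and vary $x$. Since $\ell_i(x,y_0)\subset\z_{d-i+1}(y_0)$ throughout, the $x$-derivative lies in $\hom(\ell_i(x_0,y_0),\z_{d-i+1}(y_0)/\ell_i(x_0,y_0))$. Labourie's osculating Frenet property forces the derivative of $\z_i$ at $x_0$ to take values in $\hom(\z_i(x_0),\z_{i+1}(x_0)/\z_i(x_0))$, so the $x$-derivative of $\ell_i$ lands in $\hom(\ell_i(x_0,y_0),(\z_{d-i+1}(y_0)\cap\z_{i+1}(x_0))/\ell_i(x_0,y_0))$. A Frenet dimension count (using $\z_i(x_0)\oplus\z_{d-i}(y_0)=\R^d$) identifies
$$\z_{d-i+1}(y_0)\cap\z_{i+1}(x_0)=\ell_i(x_0,y_0)\oplus\ell_{i+1}(x_0,y_0),$$
so the $x$-derivative direction sits in $E^s_i$. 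The symmetric computation with $x=x_0$ fixed gives a $y$-derivative in $E^u_i$. These two directions are linearly independent because $\ell_{i-1}(x_0,y_0)$, $\ell_i(x_0,y_0)$ and $\ell_{i+1}(x_0,y_0)$ are pairwise distinct lines (Frenet transversality), establishing immersivity and matching the announced tangent space.

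The main obstacle is upgrading the regularity from $C^1$ to $C^{1+\alpha}$, since Labourie's theorem provides only $C^1$ regularity for the Frenet curve. I would obtain the H\"older improvement dynamically: for each non-torsion $\g\in\G$, the projective transformation $\rho(\g)\in\PGL(d,\R)$ fixes $\ell_i(\g_+,\g_-)$, and by the $P_\Pi$-Anosov hypothesis, applied through Proposition \ref{prop:interior} at each simple root, all consecutive eigenvalue gaps $\lambda_j(\rho\g)-\lambda_{j+1}(\rho\g)$ are strictly positive. The $\rho(\G)$-invariant set $\Lim^i_\rho$ has tangent distribution (as computed above) aligning with the two eigendirections of $\rho(\g)$ adjacent to $\ell_i(\g_+,\g_-)$, i.e.\ the slowest expanding and slowest contracting ones. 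Standard $C^{1+\alpha}$ invariant-manifold theory at such hyperbolic fixed points, combined with equivariance under $\rho(\G)$ and density of attractor-repellor pairs $(\g_+,\g_-)$ in $\bord^2\G$, yields uniform $C^{1+\alpha}$ charts for $\Lim^i_\rho$ globally, with the stated tangent bundle.
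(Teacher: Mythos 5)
There is a genuine gap, and it sits exactly where the paper's proof does its real work: the regularity of the Frenet curves. Your argument reduces the regularity of $\ell_i$ to that of the Grassmannian curves $\z_i,\z_{d-i+1}$ (via the smooth intersection map) and then speaks of ``the derivative of $\z_i$ at $x_0$''. But Labourie's theorem only provides H\"older continuity of the maps $\z_k:\bord\G\to\grassman_k(\R^d)$; only the projective trace of $\z_1$ is known to be $\clase^1$, and any genuine smoothness of the flag curve is exceptional (this is the content of Theorem D of this very paper). So your factorization yields only H\"older continuity of $\ell_i$, and the tangent-space computation has nothing to differentiate; it presupposes the differentiability that is to be proved. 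The paper circumvents this by never differentiating $\z_i$: Lemma \ref{lema:derivadas} computes the limits of secant $2$-planes, $\lim_{z\to y}\ell_i(x,z)\oplus\ell_i(x,y)=e^u_i(x,y)=\z_i(x)\cap\z_{d-i+2}(y)$ (and symmetrically $e^s_i$), using only the uniform continuity of the Frenet sum maps (equation (\ref{equation:zbarra})), Lemma \ref{lema:posgeneral}, Property (H), and the injectivity of $\ell_i$; the H\"older dependence of the tangent planes then comes for free since $e^u_i,e^s_i$ are explicit expressions in the H\"older maps $\z_k$, which is what gives $\clase^{1+\alpha}$ in the affine chart. Note also that in the range $2\leq i\leq d/2$ the map $\ell_i$ \emph{is} injective (Lemma \ref{lema:inyectivo}); the $d=3$ double cover you invoke is the excluded case $i=(d+1)/2$, and local immersivity alone would not make the image an embedded submanifold, so discarding injectivity is not an option (the paper uses it, e.g.\ to know that $\ell_i(x,z)+\ell_i(x,y)$ is a direct sum).

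Your dynamical upgrade from $\clase^1$ to $\clase^{1+\alpha}$ also does not go through as stated. At the fixed point $\ell_i(\g_+,\g_-)$ of $\rho(\g)$ acting on $\P(\R^d)$, the plane $E^u_i\oplus E^s_i$ is spanned by the \emph{weakest} expanding and \emph{weakest} contracting eigendirections (exponents $\lambda_{i-1}-\lambda_i$ and $\lambda_{i+1}-\lambda_i$), so the surface you want is a pseudo-hyperbolic, center-type locally invariant manifold, not a strong stable/unstable manifold: standard invariant-manifold theory gives neither uniqueness nor a H\"older exponent for the derivative that is uniform in $\g$, and hence no guarantee that it coincides with $\Lim^i_\rho$ near that point. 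Moreover, even granting $\clase^{1+\alpha}$ charts at the countable dense family of points $\ell_i(\g_+,\g_-)$, such regularity does not propagate to every point of the non-compact surface $\Lim^i_\rho$ without uniform control on chart sizes and H\"older constants; equivariance only moves these charts along the orbit of that family. The paper's explicit formula for the tangent distribution makes all of this unnecessary.
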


This proposition implies the same statement for all
$i\in\{1,\ldots,d-1\}$ since $\ell_1(x,y)= \z_1(x)$ is $\clase^1$
by the Frenet property\footnote[2]{And indeed, the tangent space
can be expressed in terms of the function $\z_2$ and therefore it
is $\clase^{1+\alpha}$.}, and for $i>d/2$ one
has $\ell_{i}(x,y)=\ell_{d-i+1}(y,x)$.

\subsection{Proof of Proposition \ref{prop:ith-eigenvalue}}

Since $\rho$ is $P_\Pi$-Anosov, the map
$\ell_i:\bord^2\G\to\P(\R^d)$ is H\"older-continuous. Let us prove
that, except on special cases, it is injective. Indeed, notice
that if $i=1$ (resp. $i=d$) one has that $\ell_1(x,y)=\z_1(x)$
(resp. $\ell_d(x,y)=\z_1(y)$) and if $d=2k-1$ then $\ell_k$ is not
injective neither: $\ell_k(x,y)=\ell_k(y,x).$

\begin{lema}\label{lema:inyectivo} The map $\ell_i:\bord^2\G\to\P(\R^d)$ is injective for every $i\notin \{1, (d+1)/2,d\}$.
\end{lema}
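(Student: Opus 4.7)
The plan is to exploit the Frenet condition together with Labourie's Property (H) (Theorem~\ref{teo:H}). First, I reduce to the range $2\leq i\leq d/2$: one has the symmetry $\ell_i(x,y)=\ell_{d-i+1}(y,x)$, and the excluded set $\{1,(d+1)/2,d\}$ is preserved by $i\mapsto d-i+1$, so if $i\notin\{1,(d+1)/2,d\}$ either $i$ or $d-i+1$ lies in $\{2,\ldots,\lfloor d/2\rfloor\}$, and it suffices to treat that case.

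So assume $2\leq i\leq d/2$ and $\ell_i(x,y)=\ell_i(x',y')=:L$. I first show $x=x'$. Since $2i\leq d$, the Frenet property applied with the decomposition $d_1=d_2=i$ gives that, whenever $x\neq x'$, the spaces $\z_i(x)$ and $\z_i(x')$ are in direct sum; but $L\subset\z_i(x)\cap\z_i(x')$ is a nonzero line, so necessarily $x=x'$.

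It remains to rule out $y\neq y'$. Assume for contradiction that $y\neq y'$; since $x=x'$ is distinct from both $y$ and $y'$, the triple $(x,y,y')$ is pairwise distinct. Applying Property (H) (in the reformulation stated just before the proposition) to this triple yields
\[
\z_{d-i+1}(y')\oplus\ell_i(x,y)\oplus\z_{i-2}(x)=\R^d.
\]
(Note $i\geq 2$ makes $\z_{i-2}(x)$ well defined, with the convention $\z_0(x)=\{0\}$ when $i=2$.) The direct-sum decomposition forces $\ell_i(x,y)\cap\z_{d-i+1}(y')=\{0\}$. On the other hand, $\ell_i(x,y)=\ell_i(x,y')\subset\z_{d-i+1}(y')$ by the very definition of $\ell_i$, so $\ell_i(x,y)\cap\z_{d-i+1}(y')=\ell_i(x,y)$ is a line, a contradiction. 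Therefore $y=y'$, completing the proof.

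The only real input is Property (H); everything else is bookkeeping with dimensions, so I do not expect a serious obstacle. The one subtlety to watch is ensuring all invocations of the Frenet/Property (H) conditions are in ranges of $i$ where the relevant flag indices ($i$, $d-i+1$, $i-2$) are admissible, which is exactly why the values $i\in\{1,d\}$ (degeneracy of $\ell_i$) and $i=(d+1)/2$ (failure of the dimension inequality $2i\leq d$ or $2(d-i+1)\leq d$ in either direction) must be excluded.
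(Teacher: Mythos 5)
Your argument is correct and is essentially the paper's own proof: the same reduction to $2\leq i\leq d/2$ via $\ell_i(x,y)=\ell_{d-i+1}(y,x)$, the same use of the Frenet condition with $2i\leq d$ to force equality of the first coordinates, and the same application of Property (H) to the triple $(x,y,y')$ to contradict $\ell_i(x,y)\subset\z_{d-i+1}(y')$. No gaps to report.
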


\begin{proof} Assume first that $2\leq i <(d+1)/2.$ Thus, $2\leq i\leq d/2.$
Observe that, since $i+i\leq d,$ one has
$\z_i(x)\cap\z_i(y)=\{0\}$ for every $(x,y)\in\bord^2\G.$ Thus, if
$\ell_i(x,z)=\ell_i(y,t)$ then $x=y.$

Hence, we need to show that if $$\ell_i(x,z)=\ell_i(x,t)$$ then
$z=t.$ But if $x,z,t$ are pairwise distinct then Property (H) (Theorem \ref{teo:H}) implies $$\z_{d-i+1}(t)\oplus\ell_i(x,z)\oplus \z_{i-2}(x)=\R^d,$$ this contradicts the fact that $\ell_i(x,z)=\ell_i(x,t)\subset \z_{d-i+1}(t).$ 
Finally, if $i>(d+1)/2$ then $d-i+1<(d+1)/2$. The equality $\ell_i(x,y)=\ell_{d-i+1}(y,x)$ together with the last paragraph
gives injectivity. This finishes the proof.
\end{proof}

We need the following technical lemma.

\begin{lema}\label{lema:posgeneral}
Consider a $k$-dimensional vector subspace $W$ of $\R^d,$ and consider an incomplete flag $\{V_{d-k+i}:i\in\{0,\ldots,k\}\},$ such that $W\oplus V_{d-k}=\R^d.$  Then $\dim W\cap V_{d-k+i}=i.$
\end{lema}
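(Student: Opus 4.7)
The plan is to invoke the Grassmann dimension formula together with the monotonicity of the flag. Write
\[
\dim(W+V_{d-k+i})+\dim(W\cap V_{d-k+i})=\dim W+\dim V_{d-k+i}=k+(d-k+i)=d+i.
\]
Since $V_{d-k}\subset V_{d-k+i}$ and by hypothesis $W+V_{d-k}=\R^d,$ one has $W+V_{d-k+i}=\R^d,$ i.e.\ $\dim(W+V_{d-k+i})=d.$ Substituting into the previous identity yields $\dim(W\cap V_{d-k+i})=i,$ as claimed.

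There is no real obstacle here: the statement is a direct consequence of the Grassmann formula and the transversality assumption $W\oplus V_{d-k}=\R^d,$ which propagates to $W+V_{d-k+i}=\R^d$ for every $i\geq 0$ because the $V_{d-k+i}$'s form an increasing chain containing $V_{d-k}.$
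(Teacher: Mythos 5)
Your proof is correct, and it takes a genuinely different route from the paper's. The paper argues by induction on $i$: it verifies the case $i=1$ by hand and then passes to the quotient space $\R^d/(W\cap V_{d-k+i})$ to reduce the inductive step back to the base case. You instead apply the Grassmann dimension formula once, noting that since $V_{d-k}\subset V_{d-k+i}$ the transversality $W+V_{d-k}=\R^d$ propagates to $W+V_{d-k+i}=\R^d$ for all $i\geq0$, which immediately forces $\dim(W\cap V_{d-k+i})=k+(d-k+i)-d=i$. Your version is shorter, avoids induction entirely, and treats all $i$ (including $i=0$ and $i=k$) uniformly; the paper's quotient-space induction is a bit heavier but builds the one-dimensional increments explicitly, which may be the picture the authors had in mind given that the lemma is invoked precisely to track how $\z_1(z)\oplus\z_{d-i}(y)$ cuts $\z_i(x)$ step by step in the proof of Lemma~\ref{lema:derivadas}. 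Both proofs are sound.
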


\begin{proof} When $i=1$ the lemma follows easily. Assume now that the space $V'_i=W\cap V_{d-k+i}$ has dimension $i.$ Applying the  base step in the quotient space $\R^d/V'_i$ finishes the proof.
\end{proof}

We can now compute the 'partial derivatives' of $\ell_i.$ Define the
maps $e^u_i,e^s_i:\bord^{2}\G\to\grassman_2(\R^d)$ by
$$e^u_i(x,y)=\z_{i}(x)\cap\z_{d-i+2}(y)$$ and
$$e^s_i(x,y)=e^u_{d-i+1}(y,x)=\z_{i+1}(x)\cap\z_{d-i+1}(y).$$
Notice that injectivity implies that $\ell_i(x,y)+\ell_i(x,z)$ has
dimension $2$ (i.e. the sum is direct), we have the following:

\begin{lema}\label{lema:derivadas}
For $i\notin \{1, (d+1)/2,d\}$ and $x,y,z$ pairwise distinct, one has
$$\lim_{z\to y}\ell_i(x,z)\oplus\ell_i(x,y)=e^u_i(x,y),$$ and
$\lim_{z\to y}\ell_i(z,x)\oplus\ell_i(y,x)=e^s_i(y,x).$
\end{lema}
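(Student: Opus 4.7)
My plan is to fix a basis adapted to $(x,y)$ and reduce the 2-plane convergence to the asymptotic direction of a single vector. Using the Frenet direct-sum property I pick $u_j \in \ell_j(x,y)$ for each $j \in \{1,\ldots,d\}$; these form a basis of $\R^d$ with $\z_k(x) = \operatorname{span}(u_1,\ldots,u_k)$, $\z_k(y) = \operatorname{span}(u_{d-k+1},\ldots,u_d)$, $\ell_i(x,y) = \operatorname{span}(u_i)$, and $e^u_i(x,y) = \operatorname{span}(u_{i-1},u_i)$. Since $\operatorname{span}(u_1,\ldots,u_{i-1})$ is a complement of $\z_{d-i+1}(y)$ in $\R^d$, for $z$ close to $y$ I parametrize $\z_{d-i+1}(z)$ as the graph of a unique linear map
$$\phi(z): \z_{d-i+1}(y) \longrightarrow \operatorname{span}(u_1,\ldots,u_{i-1}), \qquad \phi(y)=0.$$
A short computation using $\ell_i(x,z)\subset \z_i(x)\cap\z_{d-i+1}(z)$ yields $\ell_i(x,z) = \operatorname{span}(u_i + \phi(z)(u_i))$, so $\ell_i(x,z) \oplus \ell_i(x,y) = \operatorname{span}(u_i, \phi(z)(u_i))$. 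The lemma thus reduces to showing that the projective direction of $\phi(z)(u_i)$ in $\P(\operatorname{span}(u_1,\ldots,u_{i-1}))$ converges to $\operatorname{span}(u_{i-1})$ as $z\to y$.

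The heart of the argument is to analyse the tangent map $T$ of the $\clase^1$-curve $z\mapsto \z_{d-i+1}(z)$ at $y$, viewed as a linear map $\z_{d-i+1}(y) \to \operatorname{span}(u_1,\ldots,u_{i-1})$, so that $\phi(z) = (z-y) T + o(z-y)$ in any local chart of $\bord\pi_1\E$. The Frenet limit $\z_{d-i+1}(z) \oplus \z_1(y) \to \z_{d-i+2}(y)$ forces the image of $T$ to lie in the line $\operatorname{span}(u_{i-1})$, and the non-degeneracy of this limit forces $T$ to have rank exactly one. I then claim $\ker T = \z_{d-i}(y) = \operatorname{span}(u_{i+1},\ldots,u_d)$: given $v \in \z_{d-i}(y)$, I pick a $\clase^1$-lift $v(z)\in \z_{d-i}(z)$ with $v(y)=v$; since $\z_{d-i}(z) \subset \z_{d-i+1}(z)$ and the Frenet tangent of $\z_{d-i}$ at $y$ has image in $\z_{d-i+1}(y)/\z_{d-i}(y) \subset \z_{d-i+1}(y)$, the first-order motion of $v$ stays inside $\z_{d-i+1}(y)$, which forces $\phi(z)(v) = o(z-y)$, i.e.\ $T(v)=0$. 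A dimension count promotes the inclusion $\ker T\supset \z_{d-i}(y)$ to an equality.

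Since $u_i \notin \z_{d-i}(y) = \ker T$, we obtain $T(u_i) = c\, u_{i-1}$ with $c\neq 0$, hence $\phi(z)(u_i) = c(z-y)u_{i-1} + o(z-y)$, whose projective direction converges to $u_{i-1}$. Therefore $\operatorname{span}(u_i, \phi(z)(u_i))$ converges in $\grassman_2(\R^d)$ to $\operatorname{span}(u_{i-1}, u_i) = e^u_i(x,y)$, which is the first assertion. The second statement $\ell_i(z,x) \oplus \ell_i(y,x) \to e^s_i(y,x)$ follows by the symmetric argument, parametrizing $\z_i(z)$ as a graph over $\z_i(y)$: the Frenet limit $\z_i(z)\oplus\z_1(y) \to \z_{i+1}(y)$ puts the image of the corresponding tangent into $\operatorname{span}(u_{d-i})$, and the same kernel argument gives $\ker = \z_{i-1}(y)$, which does not contain $u_{d-i+1} = $ generator of $\ell_i(y,x)$. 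The main obstacle is precisely the identification $\ker T = \z_{d-i}(y)$: Property (H) alone would only guarantee that the $u_{i-1}$-coefficient of $\phi(z)(u_i)$ is non-zero for $z$ near $y$, which is weaker than it being the dominant coefficient, so the iterated Frenet analysis using the nested inclusion $\z_{d-i}(z) \subset \z_{d-i+1}(z)$ is essential.
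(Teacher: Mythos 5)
Your reduction is sound as far as it goes: with $u_j$ spanning $\ell_j(x,y)$ and $\z_{d-i+1}(z)$ written as the graph of $\phi(z)\colon\z_{d-i+1}(y)\to\operatorname{span}(u_1,\ldots,u_{i-1})$, one does get $\ell_i(x,z)=\operatorname{span}(u_i+\phi(z)u_i)$, and the lemma is equivalent to the direction of $\phi(z)u_i$ converging to $u_{i-1}$. The gap is everything after that. You treat $z\mapsto\z_{d-i+1}(z)$ and $z\mapsto\z_{d-i}(z)$ as $\clase^1$ curves, introduce a tangent map $T$, write $\phi(z)=(z-y)T+o(z-y)$, and choose $\clase^1$ lifts $v(z)\in\z_{d-i}(z)$. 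None of this regularity is available: the Frenet property gives only H\"older continuity of the maps $\z_k$ and the limit statements for direct sums (uniform continuity of $\vo\z$, equation (\ref{equation:zbarra})); the single differentiability statement at hand is that the projective trace $\z_1(\bord\pi_1\E)$ is a $\clase^1$ submanifold. Differentiability of the higher osculating curves is not established anywhere and is false in general --- already in the Benoist case the curve of tangent lines $\z_2$ comes from a boundary $\bord\Om$ that is $\clase^{1+\alpha}$ but typically not $\clase^2$ --- and Theorem D of this very paper shows that assuming strong regularity of the flag curve confines one to a very special class of representations, whereas the lemma must hold for every Hitchin representation. Moreover, even granting that $T$ exists, your ``non-degeneracy'' step is unjustified: the Frenet limit $\z_{d-i+1}(z)\oplus\z_1(y)\to\z_{d-i+2}(y)$ is a statement about limits of subspaces and is perfectly compatible with $\phi(z)=o(z-y)$, i.e.\ $T=0$, in which case the first-order term says nothing about the direction of $\phi(z)u_i$; and what that limit does control, without any derivative, is only the column $\phi(z)u_d$ (since $\phi(z)u_d=(u_d+\phi(z)u_d)-u_d$ lies in $\z_{d-i+1}(z)+\z_1(y)$), not the column $\phi(z)u_i$ you actually need.

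For comparison, the paper's argument never differentiates. From $\z_{d-i+1}(y)=\z_{d-i}(y)\oplus\ell_i(x,y)$ it deduces $\z_1(z)\oplus\z_{d-i+1}(y)=\z_1(z)\oplus\z_{d-i}(y)\oplus\ell_i(x,y)$, intersects both sides with $\z_i(x)$, and uses the dimension count of Lemma \ref{lema:posgeneral} to obtain $(\z_1(z)\oplus\z_{d-i+1}(y))\cap\z_i(x)=\bigl([\z_1(z)\oplus\z_{d-i}(y)]\cap\z_i(x)\bigr)\oplus\ell_i(x,y)$; then uniform continuity of $\vo\z$ identifies, for $z$ near $y$, the left-hand side with (something close to) $e^u_i(x,y)$ and the right-hand side with (something close to) $\ell_i(x,z)\oplus\ell_i(x,y)$. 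If you want to keep your graph-coordinate formulation, you would have to replace every appeal to a tangent map by limit statements of this kind for sums of the $\z_k$'s; as written, the identification $\ker T=\z_{d-i}(y)$ and indeed the existence of $T$ have no basis.
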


\begin{proof} The second statement follows from the first and the equalities $\ell_i(x,y)=\ell_{d-i+1}(y,x)$ and  $e^s_i(x,y)=e^u_{d-i+1}(y,x).$ We will focus hence on the first convergence. 

Since $\z_i(x)\cap\z_{d-i}(y)=\{0\},$ one has
$\z_{d-i+1}(y)=\z_{d-i}(y)\oplus \ell_i(x,y).$ Since $i \geq 2$
one has $(d-i+1)+1\leq d,$ and therefore the Frenet condition
implies
$$\z_1(z)\oplus\z_{d-i+1}(y)=\z_1(z)\oplus\z_{d-i}(y)\oplus
\ell_i(x,y).$$ Intersecting with $\z_i(x)$ one has $$(\z_1(z)\oplus\z_{d-i+1}(y))\cap\z_i(x)=(\z_1(z)\oplus\z_{d-i}(y)\oplus
\ell_i(x,y))\cap\z_i(x).$$ Since $\z$ is a Frenet curve Lemma \ref{lema:posgeneral} implies that the left hand side of the
equality has dimension 2 and also implies that $\dim
(\z_1(z)\oplus\z_{d-i}(y))\cap\z_i(x)=1.$ Since
$\ell_i(x,y)\in\z_i(x)$ we conclude that
\begin{equation}\label{eq:igualdad}(\z_1(z)\oplus\z_{d-i+1}(y))\cap\z_i(x)=([\z_1(z)\oplus\z_{d-i}(y)]\cap\z_i(x))\oplus\ell_i(x,y).\end{equation}

Given $\eps>0,$ consider $\delta>0$ from uniform continuity of
$\vo\z$ (equation (\ref{equation:zbarra})). If $d(z,y)\leq \delta$ then $\z_1(z)\oplus\z_{d-i+1}(y)$
is $\eps$-close to $\z_{d-i+2}(y),$ hence the left hand side of
equation (\ref{eq:igualdad}) is $\eps$-close to $e^u_i(x,y).$

Moreover, if $d(z,y)<\delta$ one has that
$\z_1(z)\oplus\z_{d-i}(y)$ is $\eps$-close to $\z_{d-i+1}(z).$
Thus $(\z_1(z)\oplus\z_{d-i}(y))\cap\z_i(x)$ is $\eps$-close to
$\ell_i(x,z).$ Furthermore $\ell_i(x,z)\cap\ell_i(x,y)=\{0\}$ since $z \neq y,$ hence the right hand side of equation
(\ref{eq:igualdad}) is $\eps$-close to
$\ell_i(x,z)\oplus\ell_i(x,y).$ Thus, equation (\ref{eq:igualdad})
implies that
$$d_{\grassman_2(\R^d)}(e^u_i(x,y),\ell_i(x,z)\oplus\ell(x,y))<2\eps.$$
\end{proof}

Using Lemmas \ref{lema:inyectivo}  and \ref{lema:derivadas} we can finish the proof of Proposition \ref{prop:ith-eigenvalue}

For $2\leq i\leq d-1,$ denote by
$\ell_i^*(x,y)=\z_{i-1}(x)\oplus\z_{d-i}(y)$ and note that
$\ell_i(x,y)\oplus\ell_i^*(x,y)=\R^d.$ Consider now the affine
chart of $\P(\R^d)$ defined by this decomposition, i.e. fix
$v\in\ell_i(x,y)$ and consider the map $\vartheta:\ell^*_i(x,y)\to
\P(\R^d)$ defined by $$w\mapsto \R(w+v).$$ This map identifies
$\ell^*_i(x,y)$ with $\P(\R^d-\P(\ell^*_i(x,y))).$

Denote by $w_i(a,b)\in\ell_i^*(x,y)$ the point defined by
$\vartheta(w_i(a,b))=\ell_i(a,b).$ This map may only be defined
near $(x,y),$ but this is not an issue. Observe that
$\vartheta^{-1}(\ell_i(x,z)\oplus\ell_i(x,y))$ is the straight
line defined by 0 and $w_i(x,z).$ The same holds for
$\vartheta^{-1}(\ell_i(z,y)\oplus\ell_i(x,y))$. Lemma
\ref{lema:derivadas} implies that the set
$\vartheta^{-1}\Lim^i_\rho$ has partial derivatives. Moreover,
these partial derivatives are H\"older-continuous since they can
be expressed in terms of the maps $\z_k.$

This implies that $\vartheta^{-1}\Lim^i_\rho$ is
$\clase^{1+\alpha}$ (near 0), and that its tangent space at $0$ is
$$\vartheta^{-1}(e^u_i(x,y))\oplus \vartheta^{-1}(e^s_i(x,y))=\ell_{i-1}(x,y)\oplus\ell_{i+1}(x,y).$$

We conclude that $\Lim^i_\rho$ is $\clase^{1+\alpha}$ and that its
tangent space at $\ell_i(x,y)$ is $E^u_i(x,y)\oplus E^s(x,y)$
(see Remark \ref{obs:TP}). This finishes the proof.
\begin{flushright}$\square$\end{flushright}

\section{Theorem C: The Anosov flow associated to $\ell_i$}\label{section:cociclosholder}

Let $\rho\in\Hitchin(\E,d),$ denote by $\G=\pi_1\E$ and consider the manifold $\Lim^i_\rho$ provided by Proposition \ref{prop:ith-eigenvalue}. Let $\w{\sf{F}}^i_\rho$ be the tautological line bundle over $\Lim^i_\rho$ whose fiber ${\sf{M}}^i_\rho(x,y)$ at $\ell_i(x,y)$ consists on
the elements of $\ell_i(x,y),$ i.e.
$${\sf{M}}^i_\rho(x,y)=\{v\in\ell_i(x,y)-\{0\}\}/v\sim-v.$$ The
fiber bundle $\w{\sf{F}}^i_\rho$ is equipped with the action of
$\rho(\G)$ and with a commuting $\R$-action, defined on each fiber
by $$\w\phi^i_t(v)=e^{-t}v.$$

Recall that $\frak a$ is the Cartan algebra of $\frak{sl}(d,\R)$ and that $\eps_i\in\frak a$ is defined by $\eps_i(a_1,\ldots,a_d)=a_i.$ The purpose of this section is to prove the following theorem.

\begin{teo}\label{teo:teoC} Assume $\cone_{\rho}\cap\ker \eps_i=\{0\},$ then there exists a $\rho$-equivariant H\"older-continuous homeomorphism $E:\w{\sf{F}^i_\rho}\to\w{\UG}$ that preserves the orbits of the respective flows.
\end{teo}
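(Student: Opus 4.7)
The plan is to follow the blueprint of Theorem \ref{teo:flujo1} (the construction of the geodesic flow of a convex Anosov representation), with the attractive line $\xi(\g_+)$ replaced by the $i$-th eigenline $\ell_i(\g_+,\g_-)$. First I would extract the crucial analytic consequence of the hypothesis: $\cone_\rho\cap\ker\eps_i=\{0\}$ means that $\eps_i$ has constant sign on $\cone_\rho\setminus\{0\}$ and that there is $k>0$ with $|\eps_i(\lambda(\rho\g))|\geq k|\g|$ for every non-torsion $\g\in\G$. Since $\rho$ is $P_\Pi$-Anosov (Theorem \ref{teo:hitchinFrenet}), applying the proof of Proposition \ref{prop:interior} to each convex Anosov representation $\L_\alpha\circ\rho$ yields $\|\lambda(\rho\g)\|\asymp|\g|$; compactness of $\P(\cone_\rho)$ together with the non-vanishing of $\eps_i$ off the origin then provides the bound $|\eps_i|\geq k'\|\cdot\|$ on $\cone_\rho.$

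I would then construct $E$ in two stages. Stage one trivializes the bundle. The Euclidean norm on $\R^d$ descends to a function $\|\cdot\|\colon\w{\sf F}^i_\rho\to\R_{>0}$ (since $\|-v\|=\|v\|$), which combined with the H\"older-continuous identification $\ell_i\colon\bord^2\G\to\Lim^i_\rho$ from Proposition \ref{prop:ith-eigenvalue} and Lemma \ref{lema:inyectivo} defines a H\"older homeomorphism
\[
E_0\colon\w{\sf F}^i_\rho\longrightarrow\bord^2\G\times\R,\qquad v\mapsto\bigl(\ell_i^{-1}(\R v),\,-\log\|v\|\bigr)
\]
(the case $i=(d+1)/2$ requires a $\Z/2$-quotient of the base, but is essentially precluded by the hypothesis near the Fuchsian locus). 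By construction $E_0$ intertwines $\w\phi^i_t$ with translation by $t$ on the $\R$-factor, and transports the $\rho(\G)$-action into $\g\cdot(x,y,s)=(\g x,\g y,s+\algo{\g,x,y})$, where $\algo{\g,x,y}:=-\log\|\rho(\g)v_{xy}\|$ (for a H\"older family of unit vectors $v_{xy}\in\ell_i(x,y)$) is a H\"older additive cocycle with periods $\algo{\g,\g_+,\g_-}=-\eps_i(\lambda(\rho\g))$.

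Stage two matches this suspension with the Gromov geodesic flow via reparametrization. Corollary \ref{cor:ftheta} applied to the $P_\Pi$-Anosov representation $\rho$ produces a H\"older map $f_\rho^\Pi\colon\UG\to\frak a$ with $\int_{[\g]}f_\rho^\Pi=\lambda(\rho\g)$, so that $g:=\pm\eps_i\circ f_\rho^\Pi$ is a H\"older function on $\UG$ whose periods $\pm\eps_i(\lambda(\rho\g))$ are positive once the appropriate sign is chosen. By the preliminary lower bound and Lemma \ref{lema:positiva}, $g$ is Liv\v sic-cohomologous to a strictly positive H\"older function $\tilde g$. Reparametrizing the Gromov geodesic flow of $\G$ on $\w\UG=\bord^2\G\times\R$ by $\tilde g$ (Definition \ref{defi:repa}) produces a translation flow sharing the orbits of the Gromov flow but whose periodic orbits have periods $|\eps_i(\lambda(\rho\g))|$. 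Both this reparametrized flow and the flow $\w\phi^i$ transported through $E_0$ are translation flows on $\bord^2\G\times\R$ whose $\G$-cocycles are H\"older and have equal periods, so Liv\v sic's Theorem \ref{teo:livsic3} supplies a H\"older coboundary identifying them. The resulting fiber-preserving $\G$-equivariant H\"older homeomorphism, composed with $E_0^{-1}$ and with the identity orbit equivalence between the reparametrized and original Gromov flow, produces the required $E$.

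The main obstacle is the positivity step: the application of Lemma \ref{lema:positiva} to convert $g$ into a positive cohomologous function depends crucially on the uniform lower bound $|\eps_i(\lambda(\rho\g))|\geq k|\g|,$ which is precisely what the hypothesis $\cone_\rho\cap\ker\eps_i=\{0\}$ provides (together with $\|\lambda(\rho\g)\|\asymp|\g|$). Without it, Lemma \ref{lema:positiva} is inapplicable and the reparametrization argument collapses. The remaining steps -- H\"older regularity via Proposition \ref{prop:ith-eigenvalue} and formal cocycle manipulations from Sections \ref{section:1} and \ref{section:convexA} -- are routine.
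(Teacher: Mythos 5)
Your proof is correct and is structurally the same as the paper's: identify $\w{\sf F}^i_\rho$ with $\bord^2\pi_1\E\times\R$ via $(\ell_i(x,y),v)\mapsto(x,y,\log\|v\|)$, so that $\rho(\pi_1\E)$ acts through the H\"older cocycle with periods $\pm\lambda_i(\rho\g)$, and then match this suspension to the Gromov geodesic flow. The only divergence is organizational: the paper applies the Reparametrizing Theorem~\ref{teo:reparam} to this cocycle as a single black box, after noting via Remark~\ref{obs:interior2} that the hypothesis $\cone_\rho\cap\ker\eps_i=\{0\}$ (equivalently $\pm\eps_i\in\inte\cone_\rho^*$) gives finite positive entropy, whereas you re-derive the content of that theorem by hand through Corollary~\ref{cor:ftheta}, Lemma~\ref{lema:positiva}, and a Liv\v sic--Ledrappier identification of cocycles with equal periods. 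This costs you more space without buying anything new, since Theorem~\ref{teo:reparam} exists precisely for this purpose, but it does correctly isolate where the hypothesis enters. One small imprecision worth flagging: the reparametrized Gromov flow is not literally a translation flow on $\bord^2\pi_1\E\times\R$; it becomes one only after conjugating by the time change $(x,y,t)\mapsto(x,y,\kappa_{\tilde g}(x,y,t))$, and it is that conjugacy which turns the $\G$-action into a cocycle with periods $\int_{[\g]}\tilde g$ to which Liv\v sic can be applied.
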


Consequently the action of $\rho(\G)$ on $\w{\sf{F}^i_\rho}$ is
properly discontinuous and cocompact and the quotient flow
$\phi^i$ on ${\sf{F}}^i_\rho=\rho(\G)\/\w{\sf{F}^i_\rho}$ is a
change of speed of the geodesic flow of $\G.$ Moreover one has the
following proposition.

\begin{prop}\label{prop:anosovflow} Assume $\cone_{\rho}\cap\ker \eps_i=\{0\},$ then $\phi^i$ is a $\clase^{1+\alpha}$ Anosov flow whose unstable distribution $E^u_i$ is given by (the induced on the quotient by)
$\hom(\ell_i(x,y),\ell_{i-1}(x,y)).$ Consequently the expansion rate
$\lambda^u:{\sf{F}}^i_\rho\to\R_+$ verifies that for every $\g \in \G$ one has
that: $$ \int_{[\g]} \lambda^u = \sigma_{i-1}(\lambda(\rho \g)).$$
\end{prop}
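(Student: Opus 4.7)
The plan is to upgrade the orbit equivalence with the geodesic flow (Theorem \ref{teo:teoC}) into a genuine $\mathcal{C}^{1+\alpha}$ Anosov structure by exhibiting the stable and unstable subbundles explicitly, and then to read off the period formula from the eigenvalues of $\rho(\g)$. I would proceed in three main steps: regularity, invariant splitting, and hyperbolic rates at periodic orbits.

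For regularity, I would observe that by Proposition \ref{prop:ith-eigenvalue}, $\Lim^i_\rho$ is a $\mathcal{C}^{1+\alpha}$ submanifold of $\P(\R^d)$, so $\w{\sf F}^i_\rho$, as the restriction of the tautological line bundle to $\Lim^i_\rho$, inherits a $\mathcal{C}^{1+\alpha}$ structure. The fiber action $v\mapsto e^{-t}v$ is real-analytic, $\rho(\G)$ acts by restriction of linear automorphisms of $\R^d$, and Theorem \ref{teo:teoC} makes the quotient compact; hence $\phi^i$ is a $\mathcal{C}^{1+\alpha}$ flow on the compact manifold ${\sf F}^i_\rho$. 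For the splitting, the tangent space at $v\in\ell_i(x,y)$ decomposes as $\R v$ (the flow direction) plus a horizontal complement which, again by Proposition \ref{prop:ith-eigenvalue}, is canonically identified with $\hom(\ell_i,\ell_{i-1})\oplus\hom(\ell_i,\ell_{i+1})$. Both summands are H\"older continuous, $\w\phi^i$-invariant (since the flow acts only in the fiber direction) and $\rho(\G)$-equivariant, so they descend to continuous $\phi^i$-invariant subbundles $E^u_i$, $E^s_i$ on ${\sf F}^i_\rho$.

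The core computation is the linearized return map at each periodic orbit. The hypothesis $\cone_\rho\cap\ker\eps_i=\{0\}$ forces $\eps_i$ to have constant sign on the limit cone; assume it is positive (the opposite case follows by swapping $\g_+\leftrightarrow\g_-$). For non-torsion $\g\in\G$ and $v_0\in\ell_i(\g_+,\g_-)$ one obtains a periodic orbit of period $T=\lambda_i(\rho\g)>0$, and the first return map lifts to $v\mapsto e^{-T}\rho(\g)v$, whose differential at $v_0$ is $e^{-T}\rho(\g)$ acting on the $3$-dimensional subspace $T_{v_0}\w{\sf F}^i_\rho=\ell_{i-1}(\g_+,\g_-)\oplus\ell_i(\g_+,\g_-)\oplus\ell_{i+1}(\g_+,\g_-)\subset\R^d$. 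Since each $\ell_j(\g_+,\g_-)$ is a $\rho(\g)$-eigenline with eigenvalue $e^{\lambda_j(\rho\g)}$, the spectrum of the differential consists of $e^{\sigma_{i-1}(\lambda(\rho\g))}$, $1$ and $e^{-\sigma_i(\lambda(\rho\g))}$ on $E^u_i$, $E^0$ and $E^s_i$ respectively. Proposition \ref{prop:interior} applied to the $P_\Pi$-Anosov representation $\rho$ gives $\sigma_j(\lambda(\rho\g))\geq c|\g|$ uniformly for $j\in\{i-1,i\}$, while the hypothesis bounds $T=\eps_i(\lambda(\rho\g))$ above by a multiple of $|\g|$ on the projectivised limit cone, so the ratios $\sigma_{i\pm1}/\eps_i$ are bounded away from zero uniformly in $\g$.

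Density of periodic orbits (from the metric Anosov structure given by Theorem \ref{teo:teoC} via the Anosov closing lemma) together with the continuity of the explicit $\phi^i$-invariant splitting then lets these periodic-orbit estimates propagate to pointwise uniform hyperbolicity, so $\phi^i$ is a $\mathcal{C}^{1+\alpha}$ Anosov flow with unstable distribution $E^u_i=\hom(\ell_i,\ell_{i-1})$. The period formula is then immediate from Remark \ref{obs:expperiodo}: $\int_{[\g]}\lambda^u=\log\det D\phi^i_{p(\g)}|_{E^u_i}=\sigma_{i-1}(\lambda(\rho\g))$. The main obstacle in executing this plan rigorously is the passage from pointwise hyperbolicity along the dense set of periodic orbits to uniform Anosov hyperbolicity on the whole manifold; this step relies crucially on the metric Anosov structure already provided by Theorem \ref{teo:teoC}, so that any continuous $\phi^i$-invariant splitting matching the abstract stable/unstable laminations at periodic points has to coincide with them everywhere.
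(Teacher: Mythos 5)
Your overall plan follows the same architecture as the paper's proof---$\mathcal{C}^{1+\alpha}$ regularity from Proposition~\ref{prop:ith-eigenvalue}, an explicit invariant splitting into $\hom(\ell_i,\ell_{i\pm 1})$, and the period formula via the eigenvalues of $\rho(\g)$. The closing computation $\int_{[\g]}\lambda^u=\sigma_{i-1}(\lambda(\rho\g))$ via Remark~\ref{obs:expperiodo} is exactly what the paper does.

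The place where your argument differs, and where it is not yet rigorous, is the passage from periodic-orbit data to the Anosov property. The estimate you extract from Proposition~\ref{prop:interior} (that $\sigma_{i\pm 1}(\lambda(\rho\g))/\eps_i(\lambda(\rho\g))$ is bounded below) controls the \emph{period-averaged} expansion rate on $E^u_i$ over a full return, but the Anosov definition requires uniform expansion at a \emph{fixed} time $t_0$ independent of the base point. Period-averaged rates plus density of periodic orbits plus continuity of the splitting do not, on their own, imply uniform hyperbolicity: the pointwise rate along an orbit could oscillate and even become negative over intervals of fixed length, while still averaging out to a positive number over one (possibly very long) period. This is precisely the failure mode that prevents ``propagation by density'' from being a self-contained argument. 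You do flag this as the main obstacle and invoke the metric Anosov structure to overcome it, but what you say the metric Anosov structure buys (coincidence of $E^u_i,E^s_i$ with the abstract laminations everywhere) is not enough; coincidence of distributions says nothing about the differential.

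The missing observation, which is the actual crux of the paper's argument, is that the metric Anosov structure from Theorem~\ref{teo:teoC} holds for a metric \emph{compatible with the $\mathcal{C}^{1+\alpha}$ differentiable structure} on ${\sf F}^i_\rho$. Metric Anosov gives uniform metric-sense expansion along the unstable lamination at a fixed time $t_0$ for every point; because along periodic orbits the unstable leaf is tangent to $E^u_i$ and the metric is Riemannian-compatible, this forces $\|d\phi^i_{t_0}|E^u_i\|\geq\lambda>1$ on periodic orbits; by density and continuity this bound holds everywhere, and iterating gives exponential estimates. Once you insert this compatibility step, your proposal becomes correct, and the eigenvalue estimates from Proposition~\ref{prop:interior} become superfluous (they are subsumed by the metric Anosov structure). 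So the approach is the same as the paper's, but the compatibility between the metric Anosov metric and the smooth structure should be made explicit rather than folded into an appeal to density.
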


Lets prove Proposition \ref{prop:anosovflow} assuming Theorem \ref{teo:teoC}.

\begin{proof}
Since $\w{\sf{F}}^i_\rho$ is a $\clase^{1+\alpha}$ manifold and
the action of $\rho(\pi_1\E)$ on it is linear, we obtain that
${\sf{F}}^i_\rho=\rho(\pi_1\E)\/\w{\sf{F}}^i_\rho$ is
$\clase^{1+\alpha}$ and so is $\phi^i.$

Theorem \ref{teo:teoC} implies that $\phi^i$ is H\"older conjugate to a repara\-me\-tri\-zation of an Anosov flow (i.e. the geodesic flow of $\G$), hence it is metric Anosov with respect to the metric induced by the quotient: To prove this last assertion, the only thing to check is the existence of local (strong) stable and unstable manifolds since the uniform contraction and expansion follows from the fact that the reparametrizing function is positive. The existence of local (strong) stable and unstable
manifolds follows from classical graph transform arguments.

The differential $d\phi_t^i$ of $\phi_t^i$ preserves the
distribution $E^u_i$ induced on the quotient by
$\hom(\ell_i(x,y),\ell_{i+1}(x,y)).$ Along the periodic orbits, the local unstable
manifolds are tangent to $E^u_i$. Since the expansion of the local unstable manifolds is uniformly exponential, it follows that there exists $T$ such that for all $p$ in a periodic orbit one has $$\|d\phi_i^T|E_i^u(p)\|\geq2.$$ Since periodic orbits are dense and $E^u_i$ is continuous one concludes that 
$E^u_i$ is expanded uniformly in time. The symmetric argument gives
uniform contraction of $E^s_i.$

Finally, if $\g\in\G$ then recall that $\ell_i(\g_+,\g_-)$ is the eigenline of $\rho\g$ associated to the eigenvalue of modulus $\exp\lambda_i(\rho\g).$ Hence one has $$\g \cdot
(\ell_i(\g_+,\g_-), v)= (\ell_i(\g_+,\g_-), \rho \g (v))=\w{\phi^i}_{\lambda_i(\rho\g)}(\ell_i(\g_+,\g_-),v).$$

Thus, if one considers a $\G$-invariant Riemannian metric $\|\ \|$ on $\w{\sf{F}^i_\rho}$ and $\varphi \in
\hom(\ell_i (\g_+,\g_-),\ell_{i-1}(\g_+,\g_-))$ one has that $$\|d\w{\phi^i}_{\lambda_i(\rho\g)} (\varphi)\| = \|\g\cdot \varphi\|=\|\exp(\lambda_{i-1}(\rho\g)-\lambda_i(\rho\g))\varphi\|=\exp(\sigma_{i-1}(\lambda(\rho\g)))\|\varphi\|. $$

Hence Remark \ref{obs:expperiodo} implies that, for $x$ in the periodic orbit corresponding to $\g$ one has $$ \int_{[\g]}\lambda^u=\log \det (d_x\phi^i_{\lambda_i(\rho\g)}|E^u_i) = \sigma_{i-1}(\lambda(\rho\g)).  $$ This finishes the proof.

\end{proof}

Notice that Corollary \ref{cor:conolimite} implies that the map $\rho\mapsto\P(\cone_\rho)$ is continuous on $\Hitchin(\E,d)$ and hence $$U_i=\{\rho\in\Hitchin(\E,d):\cone_{\rho}\cap\ker \eps_i=\{0\}\}$$ 
is an open set. If $\rho_0$ is Fuchsian, then $$\cone_{\rho_0}=\frak a_{\PSL(2,\R)}^+=\{(d-1,d-3,\cdots,3-d,1-d)t:t\in\R_+\}.$$ Hence, if $i\in\{2,\ldots,d-1\}$ with $i\neq (d+1)/2$ then $\cone_{\rho_0}\cap\ker\eps_i=\{0\}.$ This is to say, the Fuchsian locus is contained in the open set $U=\bigcap_{i\neq (d+1)/2}U_i.$ One has the following corollary.

\begin{cor}[Theorem C]\label{cor:tuttipapel} If $\rho$ belongs to the neighborhood $U$ of the Fuchsian locus, then Proposition \ref{prop:anosovflow} holds for $\rho.$
\end{cor}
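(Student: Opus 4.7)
The plan is to reduce the corollary to a straightforward verification that the hypothesis $\cone_\rho\cap\ker\eps_i=\{0\}$ of Proposition \ref{prop:anosovflow} is satisfied simultaneously for all admissible indices $i$ whenever $\rho$ lies in a suitable neighborhood of the Fuchsian locus. Since Proposition \ref{prop:anosovflow} already delivers everything once that single hypothesis is checked, the content of the corollary is really the topological statement that $U=\bigcap_{i\neq(d+1)/2}U_i$ is an open neighborhood of the Fuchsian locus.

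First I would compute $\cone_{\rho_0}$ for a Fuchsian $\rho_0\in\Hitchin(\E,d)$. Using that $\rho_0$ factors through the irreducible $\tau_d:\PSL(2,\R)\to\PSL(d,\R)$, every Jordan projection $\lambda(\rho_0\g)$ is a nonnegative multiple of the vector $v_d=(d-1,d-3,\ldots,3-d,1-d)$, so $\cone_{\rho_0}=\R_{\geq 0}\cdot v_d$. The $i$-th coordinate of $v_d$ is $d-2i+1$, which vanishes precisely when $i=(d+1)/2$. Hence for every $i\in\{2,\ldots,d-1\}$ with $i\neq(d+1)/2$ one has $\cone_{\rho_0}\cap\ker\eps_i=\{0\}$, i.e.\ $\rho_0\in U_i$, and therefore $\rho_0\in U$.

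Next I would verify that each $U_i$ is open, which already appears as the first assertion of Corollary \ref{cor:conolimite}: the map $\rho\mapsto\P(\cone_\rho)$ is continuous into the space of compact subsets of $\P(\frak a)$, so the condition $\P(\cone_\rho)\cap\P(\ker\eps_i)=\vacio$ (which is equivalent to $\cone_\rho\cap\ker\eps_i=\{0\}$ since the limit cone is a cone avoiding the origin's neighborhood in a nontrivial way by Proposition \ref{prop:interior}) is an open condition. Because the intersection ranges over the finite set of indices $i\neq(d+1)/2$, the set $U$ is a finite intersection of open sets containing the Fuchsian locus, hence an open neighborhood.

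Finally, for any $\rho\in U$ and any $i\in\{2,\ldots,d-1\}$ with $i\neq(d+1)/2$, the hypothesis of Theorem \ref{teo:teoC} and of Proposition \ref{prop:anosovflow} is met, so the proposition applies verbatim to $\rho$ for every such $i$. No step here is really an obstacle: the geometric work was already done in establishing Proposition \ref{prop:ith-eigenvalue}, Theorem \ref{teo:teoC} and Proposition \ref{prop:anosovflow}, and the analytic work was done in Corollary \ref{cor:conolimite}. The only thing one has to be careful about is the exceptional index $i=(d+1)/2$, which occurs only when $d$ is odd and where $\ell_i$ fails to be injective (cf.\ Lemma \ref{lema:inyectivo}); this case is legitimately excluded from $U$ at the Fuchsian locus itself, as the explicit computation of $\cone_{\rho_0}$ shows it lies inside $\ker\eps_{(d+1)/2}$, and the separate treatment promised in Remark \ref{obs:U} for $\Hitchin(\E,3)$ in Section \ref{section:convex} handles the small-dimensional anomaly.
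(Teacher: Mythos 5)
Your proof is correct and follows the same route as the paper: compute $\cone_{\rho_0}=\R_{\geq 0}\cdot(d-1,d-3,\ldots,1-d)$ for Fuchsian $\rho_0$, observe its $i$-th coordinate vanishes only at $i=(d+1)/2$, invoke the continuity of $\rho\mapsto\P(\cone_\rho)$ from Corollary \ref{cor:conolimite} to conclude each $U_i$ is open, and then apply Proposition \ref{prop:anosovflow} directly on $U=\bigcap_{i\neq(d+1)/2}U_i$. The only inessential deviation is your invocation of Proposition \ref{prop:interior} to justify the equivalence $\cone_\rho\cap\ker\eps_i=\{0\}\iff\P(\cone_\rho)\cap\P(\ker\eps_i)=\vacio$; that equivalence is automatic for any closed cone and needs no nontriviality input.
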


\subsection{H\"older cocycles} In this subsection we recall a basic tool of \cite{quantitative}. Consider a $\CAT(-1)$ space $X$ and denote by $\bord X$ its visual boundary. For a discrete subgroup $\G$ of $\isom X,$ denote by $\Lim_\G$ its limit set on $\bord X.$  Let $\w{\UG}$ denote the space of parametrized complete geodesics, $$\w\UG=\{\theta:(-\infty,\infty)\to X: \text{$\theta$ is a complete geodesic with $\theta_{-\infty},\theta_\infty\in\Lim_\G$}\}.$$

The group $\G$ naturally acts on $\w{\UG},$ and we denote by $\UG = \G\/\w{\UG}$ its quotient. We will say that $\G$ is \emph{convex cocompact} if the space $\UG$ is compact. If this is the case we will naturally identify $\Lim_\G$ with the Gromov boundary $\bord\G$ of $\G.$

We will now focus on cocycles for the action of $\G$ on $\bord^2\G=(\bord\G)^2-\{(x,x):x\in\bord\G\}.$ The main references for this subsection are Ledrappier \cite{ledrappier} and \cite[Section 5]{quantitative}. The usual setting is to consider cocycles on $\bord\G,$ however, it is convenient to use $\bord^2\G$ since our cocycles are naturally defined in this space.

%The \emph{Busseman function} of $X,$ $B:\bord X\times\ X\times X\to\R,$ is defined by $B(z,p,q)= B_z(p,q)= \lim_{s\to\infty} d_X(p,\sigma(s))-d_X(q,\sigma(s)),$ where $\sigma:[0,\infty)\to X$ is any geodesic ray such that $\sigma(\infty)=z.$ Consider the space $\w{{\sf{U}} X}$ defined by $$\{\sigma:(-\infty,\infty)\to X: \text{$\sigma$ is a (complet) geodesic}\}.$$ The \emph{geodesic flow} of $X$ is the flow on $\w{{\sf{U}}X}$ defined by $\w\phi_t(\sigma)(s)=\sigma(s-t).$ Remark that $\phi_t$ commutes with the action of $\isom X.$

%Denote by $\bord^{(2)}X=\bord X\times\bord X-\{(x,x):x\in\bord X\},$ and fix a point $o\in X.$ The \emph{Hopf parametrization} of $\w{{\sf{U}X}}$ is the homeomorphism $\w{{\sf{U}X}}\to \bord^2X\times \R$ defined by $$\sigma\mapsto (\sigma(-\infty),\sigma(\infty),B_{\sigma(\infty)}(\sigma(0),o)).$$

%For $\g\in\isom X,$ denote by $|\g|$ its translation length, $|\g|=\inf_{p\in X}d_X(p,\g p).$ If $\g$ is hyperbolic then one has $$|\g|=B_{\g_+}(\g^{-1}o,o),$$ for any $o\in X,$ where $\g_+$ is the attractor of $\g$ on $\bord X.$

%Let $\G$ be a convex co-cocompact group of $X.$ Its limit set on $\bord X$ is naturally identified with $\bord\G,$ the Gromov boundary of $\G.$ Denote by $\bord^2\G=(\bord\G)^2-\{(x,x):x\in\bord\G\}.$

\begin{defi}\label{defi:cociclo}A \emph{H\"older cocycle} is a function $c:\G\times\bord^2\G\to\R$ such that $$c(\g_0\g_1,x,y)=c(\g_0,\g_1(x,y))+c(\g_1,x,y)$$ for any $\g_0,\g_1\in\G$ and $(x,y)\in\bord^2\G,$ and where $c(\g,\cdot)$ is a H\"older map for every $\g\in\G$ (the same exponent is assumed for every $\g\in\G$).
\end{defi}

Given a H\"older cocycle $c$ and a non-torsion $\g\in\G,$ the \emph{period} of $\g$ for $c$ is defined by $$\l_c(\g)=c(\g,\g_+,\g_-),$$ where $\g_+$ is the attractive fixed point of $\g$ on $\bord\G,$ and $\g_-$ is the repelling one. The cocycle property implies that $\l_c(\g)$ only depends on the conjugacy class $[\g]\in[\G].$

Two H\"older cocycles $c,c'$ are \emph{cohomologous}, if there exists a H\"older-continuous function $U:\bord^2\G\to\R,$ such that for all $\g\in\G$ one has $$c(\g,x,y)-c'(\g,x,y)=U(\g x,\g y)-U(x,y).$$ %One easily deduces from the definition that the set of periods $\{\l_c(\g):\g\in\G\textrm{ non torsion}\},$ of a H\"older cocycle, is a cohomological invariant.

\begin{teo}[Ledrappier \cite{ledrappier}]\label{teo:ledrappier} Let $c$ be a H\"older cocycle on $\bord^2\G,$ then there exists a H\"older-continuous function $f_c:\UG\to\R,$ such that for every non-torsion $[\g],$ one has $$\int_{[\g]}f_c=\l_c(\g).$$ %If $c$ is cohomologous to $c'$ then $f_c$ is Liv\v sic-cohomologous to $f_{c'}.$
\end{teo}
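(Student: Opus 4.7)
\medskip

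\noindent\textbf{Proof plan.} The strategy is to convert the boundary cocycle $c$ into a dynamical object on $\UG$ by passing through a $\G$-equivariant ``potential'' on $\w\UG$, and then differentiate it along the flow. Fix $o\in X$ and use it to trivialize $\w\UG\simeq\bord^2\G\times\R$ so that the geodesic flow acts as $\phi_s(x,y,t)=(x,y,t+s)$ and the $\G$-action takes the form $\g\cdot(x,y,t)=(\g x,\g y,t+\beta(\g,x,y))$ for the Busemann cocycle $\beta$; in these coordinates, for a non-torsion $\g$ one has $\g\cdot(\g_+,\g_-,0)=\phi_{|\g|}(\g_+,\g_-,0)$, where $|\g|$ is the translation length.

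The first step is to produce a H\"older-continuous map $F:\w\UG\to\R$ satisfying
\[
F(\g w)-F(w)=c(\g,x,y)\qquad\forall\,\g\in\G,\ w\ \text{over}\ (x,y).
\]
Since $\UG$ is compact and the $\G$-action on $\w\UG$ is properly discontinuous, one can find a H\"older function $\chi:\w\UG\to[0,1]$ with locally finitely many non-vanishing $\G$-translates and $\sum_{\g\in\G}\chi(\g w)\equiv1$. Setting
\[
F(w):=-\sum_{\g\in\G}\chi(\g w)\,c(\g,x,y),
\]
the cocycle identity together with $c(e,\cdot,\cdot)=0$ gives, after substituting $\g'=\g\g_0$ in the sum for $F(\g_0 w)$ and using $c(\g'\g_0^{-1},\g_0 x,\g_0 y)=c(\g',x,y)-c(\g_0,x,y)$, the required $\G$-equivariance of $F$. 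H\"older regularity of $c$ and of $\chi$ transfers to $F$ because the sum is locally finite.

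Next, we extract $f_c$ from $F$. Since $F$ need not be $\clase^1$ in the flow direction we take a discrete flow-derivative
\[
\w f_c(w):=F(\phi_1 w)-F(w),
\]
which is H\"older; because $c$ does not depend on the flow parameter, the equivariance of $F$ implies that $\w f_c(\g w)=\w f_c(w)$, so $\w f_c$ descends to $f_c\in\holder^\alpha(\UG,\R)$. (Alternatively, if the partition of unity is chosen smooth along the flow, one can use the genuine derivative $\partial_t|_{t=0}F(\phi_t w)$.) For the period identity, take $w=(\g_+,\g_-,0)$; the orbit of $w$ projects to the periodic orbit $[\g]$ and the telescoping
\[
\int_0^{|\g|}\w f_c(\phi_t w)\,dt=\int_{|\g|}^{|\g|+1}F(\phi_t w)\,dt-\int_0^{1}F(\phi_t w)\,dt
\]
combined with $\phi_{|\g|+t}w=\g\cdot\phi_t w$ and $F(\g\cdot\phi_t w)=F(\phi_t w)+c(\g,\g_+,\g_-)$ collapses the right hand side to $c(\g,\g_+,\g_-)=\l_c(\g)$, as required.

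The main technical hurdle is the partition-of-unity construction in the second step: one needs a quantitatively H\"older bump function on a relatively compact fundamental neighborhood whose $\G$-translates have uniformly bounded overlap, the overlap bound coming from compactness of $\UG$ and proper discontinuity. Once this is in place, H\"older regularity propagates through the finite sum defining $F$ and through the finite difference defining $\w f_c$, and the period computation is purely formal via the equivariance of $F$.
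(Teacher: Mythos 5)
Your proposal is correct and takes essentially the same approach as the paper, which quotes Ledrappier's explicit construction: both build a function on $\w\UG$ whose $\G$-coboundary recovers $c$ by averaging the cocycle against a $\G$-partition of unity (the bump function $F(d_X(\theta(0),\g o))$ in the paper plays the role of your $\chi$), then differentiate along the flow. The paper realizes the averaging multiplicatively via $f_c(\theta)=-\left.\frac{d}{dt}\right|_{t=0}\log A(\w\phi_t\theta)$ with $A(\theta)=\sum_{\g}F(d_X(\theta(0),\g o))\,e^{-c(\g^{-1},\theta_{-\infty},\theta_\infty)}$ and arranges $\clase^1$ regularity in the flow direction so a genuine derivative exists, whereas you use an additive potential and the finite difference $F\circ\phi_1-F$ --- cosmetic variants of the same idea.
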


\begin{proof} This is a slight variation from Ledrappier's theorem, but the proof follows verbatim. Indeed, one can find an explicit formula for such $f_c$ as follows (Ledrappier \cite{ledrappier} page 105). Fix a point $o\in X$ and consider a $\clase^\infty$ function $F:\R\to\R$ with compact support such that $F(0)=1, F'(0)=F''(0)=0$ and $F(t)>1/2$ if $|t|\leq 2\sup\{d_X(p,\G\cdot o):p\in X\}.$

We can assume that $t\mapsto F(d_X(\theta(t),p))$ is
differentiable on $t$ for every $\theta\in\w{\UG}$ and $p\in X.$

Let $A:\w{\UG}\to\R$ be
\begin{equation}\label{eq:formula1}A(\theta)=\sum_{\g\in\G}F(d_X(\theta(0),\g
o))e^{-c(\g^{-1},\theta_{-\infty},\theta_\infty)}.\end{equation}
The function $f_c:\w{\UG} \to\R$ defined by
\begin{equation}\label{eq:formula}
f_c(\theta)=-\left.\frac{d}{dt}\right|_{t=0} \log
A(\w\phi_t\theta),\end{equation} where $\w\phi_t\theta\in\w{\UG}$ is the parametrized geodesic $s\mapsto \t(s+t),$ is $\G$-invariant and verifies
$\int_{[\g]} f_c=c(\g,\g_-,\g_+).$
\end{proof}

If $c$ is a H\"older cocycle with non-negative periods, one defines the \emph{entropy} of $c$ by $$h_c=\limsup_{t\to\infty}\frac 1t\log\#\{[\g]\in[\G]\textrm{ non torsion}:\l_c(\g)\leq t\}\in[0,\infty].$$

%The Busseman function induces a H\"older cocycle on $\bord\G$ as follows. Fix a point $o\in X,$ consider $\xi:\bord\G\to\Lim_\G$ the equivariant map, and define $\bus_\G:\G\times\bord\G\to\R$ by $$\bus_\G(\g,x)=B_{\xi(x)}(\g^{-1}o,o).$$ The period $\bus_\G(\g,\g_+)=|\g|,$ is the length of the closed geodesic associated to $\g,$ and the entropy of $\bus_\G$ is $\hX.$

As in \cite{quantitative} one has the following reparametrizing theorem:

\begin{teo}[{\cite[Theorem 3.2]{quantitative}}]\label{teo:reparam}Let $c$ be a H\"older cocycle with non-negative periods and $h_c\in(0,\infty),$ then the action of $\G$ on $\bord^2\G\times\R$ defined by $$\g(x,y,t)=(\g x,\g y,t-c(\g,x,y))$$ is proper and cocompact. Moreover, the translation flow $\psi=(\psi_t)_{t\in\R}$ on the quotient $\G\/\bord^2\G\times\R$ is H\"older conjugated to a reparametrization of the geodesic flow of $\G.$ The topological entropy of $\psi$ is $h_c.$
\end{teo}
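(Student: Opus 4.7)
The plan is to reduce the statement to the reparametrization machinery of Section \ref{section:1} via the Hölder potential produced by Ledrappier's construction. First I would apply Theorem \ref{teo:ledrappier} to the cocycle $c$, obtaining a Hölder-continuous function $f_c:\UG\to\R$ with $\int_{[\g]}f_c=\l_c(\g)$ for every non-torsion $[\g]\in[\G]$. Since by hypothesis $c$ has non-negative periods and $h_c\in(0,\infty)$, the potential $f_c$ has non-negative periods and entropy $h_{f_c}=h_c\in(0,\infty)$; Lemma \ref{lema:positiva} then allows me to replace $f_c$ by a Livšic-cohomologous strictly positive Hölder function, which I still denote $f_c$. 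The reparametrization $\psi^{f_c}$ of the geodesic flow of $\G$ on $\UG$ is therefore well-defined in the sense of Definition \ref{defi:repa}.

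Next I would lift this reparametrization to $\w{\UG}$ and match it with the translation flow on $\bord^2\G\times\R$. Using the explicit formula \eqref{eq:formula}, the function $f_c$ lifts to a $\G$-invariant positive function $\widetilde{f}_c$ on $\w{\UG}$. Fix a base point and define, for $\theta\in\w{\UG}$, $$s(\theta)=\int_0^{t_0(\theta)}\widetilde{f}_c(\w\phi_u\theta)\,du$$ where $t_0(\theta)$ is the unique parameter at which $\theta$ is closest to the fixed base point (or any other measurable choice making $s$ Hölder). The map $H:\w{\UG}\to\bord^2\G\times\R$, $\theta\mapsto(\theta_{-\infty},\theta_\infty,s(\theta))$, is a Hölder homeomorphism (the first two coordinates are Hölder by the convex cocompactness of $\G$ on $X$, and the last is Hölder since $\widetilde{f}_c$ and the geodesic flow are). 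The crucial check is the equivariance: I would verify, using the cocycle identity for $c$ and the fact that the integrals of $\widetilde{f}_c$ along geodesic segments realize the periods of $c$, that $$H(\g\theta)=(\g\theta_{-\infty},\g\theta_\infty,s(\theta)-c(\g^{-1},\theta_{-\infty},\theta_\infty)),$$ so that $H$ conjugates the $\G$-action on $\w{\UG}$ to the twisted action on $\bord^2\G\times\R$ in the statement. Moreover $H$ sends the reparametrized flow $\w\psi^{f_c}$ to the translation on the $\R$-factor by construction.

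Properness and cocompactness of the twisted action then follow by transport of structure from the cocompact properly discontinuous action of $\G$ on $\w{\UG}$. The induced flow $\psi$ on the quotient is, via $H$, Hölder conjugate to $\psi^{f_c}$, which is a Hölder reparametrization of the geodesic flow of $\G$. For the entropy statement I would invoke Lemma \ref{lema:reparam}: the topological entropy of $\psi^{f_c}$ is the unique positive solution of $P(\phi,-sf_c)=0$, and since the geodesic flow of $\G$ is a transitive metric Anosov flow this solution equals the exponential growth rate of periods of $f_c$; but $f_c$ and $c$ have identical periods, so $h_{\tope}(\psi)=h_{f_c}=h_c$.

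The main obstacle I expect is constructing the equivariant primitive $s$ and checking its Hölder regularity on both factors of $\bord^2\G$. The cocycle identity is precisely what forces the combination $t-c(\g,x,y)$ to descend consistently to the quotient, but realizing this via a concrete Hölder section of the principal $\R$-bundle $\bord^2\G\times\R\to\bord^2\G$ requires careful bookkeeping with formula \eqref{eq:formula1}, exploiting the Hölder regularity in $\g$ and the uniform convergence of the series defining $A$.
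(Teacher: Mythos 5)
Your overall architecture is the right one, and it matches the route the paper takes (which simply defers to \cite[Theorem 3.2]{quantitative}, observing the proof runs verbatim once Ledrappier's Theorem \ref{teo:ledrappier} is available for cocycles on $\bord^2\G$): produce the potential $f_c$, replace it by a cohomologous positive one, build an equivariant H\"older section $s$ of $\bord^2\G\times\R\to\bord^2\G$ to conjugate $\w\UG$ with the twisted action, then invoke Lemma \ref{lema:reparam} for the entropy. But the construction you propose for $s$ is where the argument actually lives, and as stated it does not work.

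You define $s(\theta)=\int_0^{t_0(\theta)}\w f_c(\w\phi_u\theta)\,du$ with $t_0(\theta)$ a time of closest approach to a base point, and assert that the cocycle identity together with ``integrals of $\w f_c$ realize the periods of $c$'' gives $H(\g\theta)=\g\cdot H(\theta)$. The latter would require
$\int_{t_0(\theta)}^{t_0(\g\theta)}\w f_c(\w\phi_u\theta)\,du=-c(\g,\theta_{-\infty},\theta_\infty)$
for every $\g$ and every geodesic $\theta$, but Theorem \ref{teo:ledrappier} only identifies integrals of $f_c$ over \emph{closed} orbits with periods of $c$; it says nothing about integrals over finite segments of a non-periodic geodesic. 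Moreover the closest-approach time need not depend continuously (let alone H\"older) on $\theta$. There is also a sign slip: the target of the equivariance computation should be $s(\theta)-c(\g,\theta_{-\infty},\theta_\infty)$, not $s(\theta)-c(\g^{-1},\theta_{-\infty},\theta_\infty)$; these differ since $c(\g^{-1},x,y)=-c(\g,\g^{-1}x,\g^{-1}y)$.

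The fix is already in front of you: the auxiliary function $A:\w\UG\to\R$ from the explicit Ledrappier formula \eqref{eq:formula1} is exactly the right object. A direct computation using the $\G$-invariance of $d_X$, the substitution $\g''=\g^{-1}\g'$, and the cocycle identity yields $A(\g\theta)=e^{c(\g,\theta_{-\infty},\theta_\infty)}A(\theta)$, so $s(\theta):=-\log A(\theta)$ satisfies precisely $s(\g\theta)=s(\theta)-c(\g,\theta_{-\infty},\theta_\infty)$, and $H(\theta)=(\theta_{-\infty},\theta_\infty,-\log A(\theta))$ is the equivariant H\"older homeomorphism. The relation $f_c=-\partial_t|_{t=0}\log A\circ\w\phi_t$ then shows $H$ sends the $f_c$-reparametrized geodesic flow to the translation flow, since $-\log A(\w\phi_{\alpha_{f_c}(\theta,t)}\theta)=-\log A(\theta)+t$ by construction of $\alpha_{f_c}$. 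With this $H$ in hand, properness, cocompactness, the H\"older conjugacy, and the entropy identity $h_{\textrm{top}}(\psi)=h_{f_c}=h_c$ all follow exactly as you outline.
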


\begin{proof} The only difference between the actual statement of \cite[Theorem 3.2]{quantitative} is that the cocycle $c$ is defined on $\bord^2\G$ (as opposed to $\bord\G$), nevertheless the proof follows verbatim provided Ledrappier's Theorem \ref{teo:ledrappier}.
\end{proof}

\subsection{Proof of Theorem \ref{teo:teoC}}

Since $\cone_\rho\cap\ker\eps_i=\{0\}$ one has either
$\eps_i\in\inte\cone_\rho^*,$ or $-\eps_i\in\inte\cone_\rho^*.$ In
order to simplify notation assume $\eps_i\in\inte\cone_\rho^*.$
Remark \ref{obs:interior2} states that if this is the case then
$$h^{\eps_i}_\rho=\lim_{s\to\infty}\frac{\log\#\{[\g]\in[\pi_1\E]:\lambda_i(\rho\g)\leq
s\}}s\in(0,+\infty).$$

Consider a norm $\|\ \|$ on $\R^d.$ The H\"older cocycle $c:\pi_1\E\times\bord^2\pi_1\E\to\R,$ defined by $$c(\g,x,y)=\log\frac{\|\rho\g\cdot v\|}{\|v\|},$$ for any $v\in\ell_i(x,y),$ has periods $c(\g,\g_+,\g_-)=\lambda_i(\rho\g).$ Since $h_\rho^{\lambda_i}\in(0,\infty)$ the Reparametrizing Theorem \ref{teo:reparam} implies that the action of $\pi_1\E$ on $\bord^2\pi_1\E\times\R$ via $c,$ $$\g\cdot(x,y,t)=(\g x,\g y,t-c(\g,x,y))$$ is properly discontinuous and cocompact, moreover, the translation on the $\R$ coordinate is (conjugated to) a reparametrization of the geodesic flow of $\E$ (for a (any) hyperbolization on $\E$ fixed beforehand).

The proof of Theorem \ref{teo:teoC} is achieved by observing that the map
$\w{\sf{F}}^i_\rho\to\bord^2\pi_1\E\times\R$ defined by
$$(\ell_i(x,y),v)\mapsto (x,y,\log\|v\|)$$ is
$\pi_1\E$-equivariant for the cocycle $c$ (recall Lemma
\ref{lema:inyectivo}). This finishes the proof.

%%%%%%%%%%%%%%%%%%%%%%%%%%%%%%%%%%%%%%%%%%%%%%%%%%%%%%%%%%%%%%%%%%%%%%%%%%%%%%%%%%%%%%%%%%%%%%%%%%%%%%%%%%%%%%%%%%%%%%%%%%%%%%%%%%%%%%%%%%%%%%%%%%%%%

\section{Benoist Representations}\label{section:convex}

Let $\G$ be a hyperbolic group. A \emph{Benoist representation} is
a homomorphism $\rho:\G\to\PGL(n+1,\R)$ such that $\rho(\G)$
preserves an open convex set $\Om=\Om_\rho$ properly contained on an affine
chart, and such that the quotient $\rho(\G)\/\Om$ is compact.
Benoist
\cite{convexes1} has proved that under these conditions, %since $\G$ is hyperbolic,
the set $\Om$ is necessarily strictly convex and its boundary is a
$\clase^{1+\alpha}$ submanifold of $\P(\R^{n+1}).$

The geodesic flow $\phi=(\phi_t:\sf T^1(\rho(\G)\/\Om) \to \sf
T^1(\rho(\G)\/\Om))_{t\in\R}$ for the Hilbert metric on
$\rho(\G)\/\Omega$ is a $\clase^{1+\alpha}$ Anosov flow (Benoist
\cite{convexes1}). Denote by $\vo\varphi\in\frak a^*$ the
functional $\vo\varphi=(\eps_1-\eps_{n+1})/2.$ The topological
entropy of $\phi$ is
$$h_{\tex{top}}(\phi)=\lim_{s\to\infty}\frac{\log\#\{[\g]\in[\G]:\vo\varphi(\lambda(\rho\g))\leq
s\}}s.$$

Crampon \cite{crampon} has proved that $h_{\tex{top}}(\phi)\leq n-1,$ and equality only holds if $\Om$ is an ellipsoid, or equivalently, the Hilbert metric is Riemannian.

Benoist representation are projective Anosov representations, they are hence $P_\t$-Anosov where $\t=\{\sigma_1,\sigma_n\}\subset\Pi.$ Consider the vector space $\frak a_\t=\bigcap_{i=2}^{n-1}\ker\sigma_i.$ Its dual space $\frak a_\t^*\subset\frak a^*$ is spanned by the fundamental weights $\om_1(a)=\om_{\sigma_1}(a)=a_1$ and $$\om_n(a)=\om_{\sigma_n}(a)=\sum_1^n a_i=-a_{n+1}.$$ Denote by $\varphi^u,\varphi^s\in\frak a_\t^*$ the linear forms defined by $\varphi^u=n\om_1-\om_n$ and $\varphi^s=n\om_n-\om_1.$

Consider the expansion rate $\lambda^u:\sf
T^1(\rho(\G)\/\Om)\to\R_+$ of the geodesic flow $\phi.$ A standard
computation (for example Benoist \cite[Lemma 6.5]{convexes1})
shows that if $\g\in\G$ is primitive then
$$\int_{[\g]}\lambda^u=\sum_{i=2}^n(\lambda_1-\lambda_i)(\rho\g)=n\om_1(\lambda(\rho\g))-\om_n(\lambda(\rho\g))=\varphi^u(\lambda_\t(\rho\g)).$$

Corollary \ref{cor:rep1} and the last computation immediately imply the following.

\begin{cor} Let $\rho:\G\to\PGL(n+1,\R)$ be a Benoist representation, then $h^{\varphi^u}_\rho=h^{\varphi^s}_\rho=1.$
\end{cor}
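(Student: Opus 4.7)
The plan is a direct application of the machinery already set up. The Hilbert geodesic flow $\phi=(\phi_t)_{t\in\R}$ on $\sf T^1(\rho(\G)\/\Om)$ is a $\clase^{1+\alpha}$ Anosov flow by Benoist \cite{convexes1}. Hence Corollary \ref{cor:rep1} applies and asserts that the reparametrization $\psi=\psi^{\lambda^u}$ of $\phi$ by the infinitesimal unstable expansion rate $\lambda^u$ has topological entropy $h_{\textrm{top}}(\psi)=1$.

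By construction of a reparametrization, the $\psi$-period of the periodic orbit associated to a primitive conjugacy class $[\g]\in[\G]$ is $\int_{[\g]}\lambda^u$, which by the computation preceding the statement equals $\varphi^u(\lambda_\t(\rho\g))$. Since the topological entropy of a (metric) Anosov flow coincides with the exponential growth rate of its periodic orbits counted by period, one deduces
$$ 1 \;=\; h_{\textrm{top}}(\psi) \;=\; \lim_{s\to\infty}\frac{\log\#\{[\g]\in[\G]\textrm{ non-torsion}:\varphi^u(\lambda_\t(\rho\g))\leq s\}}{s} \;=\; h^{\varphi^u}_\rho. $$

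For the equality $h^{\varphi^s}_\rho=1$ I would run the same argument on the inverse flow $\phi^{-1}=(\phi_{-t})_{t\in\R}$: it is again a $\clase^{1+\alpha}$ Anosov flow on the same compact manifold, and its unstable distribution is the stable distribution of $\phi$, so its infinitesimal unstable expansion rate is Liv\v sic-cohomologous to the function $\lambda^s$ of Theorem \ref{teo:volumen}. A computation symmetric to Benoist's \cite[Lemma 6.5]{convexes1}---equivalently, applying the original computation to $\phi^{-1}$ and using the symmetry $\lambda(\rho\g^{-1})=-\ii(\lambda(\rho\g))$, i.e. $\lambda_i(\rho\g^{-1})=-\lambda_{n+2-i}(\rho\g)$---yields $\int_{[\g]}\lambda^s=\varphi^s(\lambda_\t(\rho\g))$. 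Corollary \ref{cor:rep1} applied to $\phi^{-1}$ then gives $h^{\varphi^s}_\rho=1$.

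There is no substantive obstacle in this argument beyond bookkeeping; the only item that requires even a line of verification is the identification $\int_{[\g]}\lambda^s=\varphi^s(\lambda_\t(\rho\g))$, which is forced by the explicit formulas $\varphi^u=n\om_1-\om_n$, $\varphi^s=n\om_n-\om_1$ and the fact that passing from $\phi$ to $\phi^{-1}$ swaps the roles of $\om_1$ and $\om_n$ at the level of periods.
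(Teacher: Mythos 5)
Your proof is correct and is essentially the paper's own argument: the paper deduces the corollary ``immediately'' from Corollary \ref{cor:rep1} together with the computation $\int_{[\g]}\lambda^u=\varphi^u(\lambda_\t(\rho\g))$, with the $\varphi^s$ case handled exactly as you do, by the inverse flow and the swap of $\om_1$ and $\om_n$. One trivial slip: the symmetry is $\lambda(\rho\g^{-1})=\ii(\lambda(\rho\g))$, not $-\ii(\lambda(\rho\g))$; the coordinate formula $\lambda_i(\rho\g^{-1})=-\lambda_{n+2-i}(\rho\g)$ that you actually use is the correct one, so the conclusion is unaffected.
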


Let $L$ be the positive cone generated by $\{\varphi^u ,\varphi^s\}$. Consider $\varphi \in \inte L$ and let $c(\varphi) \in \R_+$ be such that $c(\varphi)\varphi$ is a convex combination of $\varphi^u,\varphi^s$.

\begin{teo} For $\varphi \in\inte L$ one has that $h^{\varphi}_\rho \leq c(\varphi)$ and equality holds if and only if $\Om_\rho$ is an ellipsoid.
\end{teo}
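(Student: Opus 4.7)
The plan is to follow the same strategy as the proof of Theorem A, exploiting the convexity of $\D^\t_\rho$ together with the two boundary points $\varphi^u,\varphi^s$ provided by the previous corollary. I will split the argument into the inequality and the rigidity.

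First, to obtain the inequality: since $\D^\t_\rho$ is convex (Proposition \ref{prop:convexoreps}) and contains both $\varphi^u$ and $\varphi^s$ in its boundary, it contains the entire segment $[\varphi^u,\varphi^s]$. By definition of $c(\varphi)$, the point $c(\varphi)\varphi$ is a convex combination of $\varphi^u$ and $\varphi^s$, hence lies in $\D^\t_\rho$, meaning $h^{c(\varphi)\varphi}_\rho\le 1$. Using the scaling relation $h^{t\psi}_\rho = h^\psi_\rho/t$ one immediately obtains $h^\varphi_\rho\le c(\varphi)$.

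For the rigid direction, suppose equality holds. Then $c(\varphi)\varphi\in\bord\D^\t_\rho$ and, since $\varphi\in\inte L$, it lies in the relative interior of the segment joining $\varphi^u$ to $\varphi^s$. Because $\D^\t_\rho$ is convex with analytic boundary (Proposition \ref{prop:convexoreps}), a relative interior point of a chord that meets the boundary must force the whole chord onto $\bord\D^\t_\rho$; hence $[\varphi^u,\varphi^s]\subset\bord\D^\t_\rho$. I would then restrict to the subspace $\frak a_{G_\rho}^*$, where $G_\rho$ is the Zariski closure of $\rho(\G)$: by Remark \ref{obs:non-arithmetic} and Proposition \ref{prop:convexoreps} the intersection $\D^\t_\rho\cap\frak a_{G_\rho}^*$ is strictly convex, so the only way the non-degenerate segment $[\varphi^u,\varphi^s]$ can lie in $\bord\D^\t_\rho$ is for the restriction map $\frak a_\t^*\to\frak a_{G_\rho}^*$ to collapse it to a point. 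Since $\frak a_\t$ is two-dimensional and $\varphi^u\ne\varphi^s$, this forces $\frak a_{G_\rho}$ to be one-dimensional, i.e. $G_\rho$ has real rank one.

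Finally I would identify the resulting rank-one group. As $\rho$ is a Benoist representation, $\rho(\G)$ is irreducible and purely loxodromic; combined with $\rk G_\rho=1$ this forces, by the classification of rank-one real simple Lie groups (as in the proof of Theorem A), $G_\rho$ to be a finite cover of $\PO(n,1)$. The only properly convex open $\PO(n,1)$-invariant domain in $\P(\R^{n+1})$ is the Klein model of $\H^n$, namely an ellipsoid, so $\Om_\rho$ is an ellipsoid. For the converse, if $\Om_\rho$ is an ellipsoid then $\rho$ factors through $\PO(n,1)$ and an explicit computation of the Jordan projection shows that $\varphi^u|\frak a_{G_\rho}=\varphi^s|\frak a_{G_\rho}$, so entropy is constant along the affine line through $\varphi^u$ and $\varphi^s$ in $\bord\D^\t_\rho$; evaluating at $c(\varphi)\varphi$ yields $h^\varphi_\rho=c(\varphi)$. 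The main technical point is the rank-one identification step: rank one alone does not pin down the group, so the irreducibility and proximality coming from the Benoist hypothesis must be used crucially to rule out other rank-one real Lie groups.
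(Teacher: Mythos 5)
Your argument for the inequality matches the paper exactly: $\D^\t_\rho$ is convex with $\varphi^u,\varphi^s\in\bord\D^\t_\rho$, so the chord is contained in $\D^\t_\rho$, and the scaling relation gives $h^\varphi_\rho\leq c(\varphi)$. Your treatment of the converse direction (ellipsoid $\Rightarrow$ equality) is a worthwhile addition that the paper only implicitly addresses, and the computation $\om_1=\om_n$ on the Cartan of $\PO(n,1)$ is correct.

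The "only if" direction, however, contains a genuine gap at the step where you conclude that $\frak a_{G_\rho}$ is one-dimensional. What the strict-convexity argument actually yields is the following: equality forces $[\varphi^u,\varphi^s]\subset\bord\D^\t_\rho$, so by Proposition \ref{prop:convexoreps} the representation $\rho$ must be arithmetic on $\frak a_\t$, i.e. the linear span of $\{\lambda_\t(\rho\g)\}$ is a proper subspace of $\frak a_\t.$ Since that span equals $p_\t(\frak a_{G_\rho})$ and is nonzero (proximality gives $\lambda_1>0$), one obtains $\dim p_\t(\frak a_{G_\rho})=1$, which is the relation $\lambda_1(\rho\g)=-\lambda_{n+1}(\rho\g)$ for all $\g.$ This does \emph{not} imply $\dim\frak a_{G_\rho}=1$: the projection $p_\t$ can collapse a higher-dimensional Cartan to a line, so $G_\rho$ could a priori have higher rank while still satisfying the single linear constraint. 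What it does imply, via Benoist's density theorem (Remark \ref{obs:non-arithmetic}), is that $\rho(\G)$ is not Zariski-dense in $\PGL(n+1,\R)$. The paper then concludes by invoking Benoist's classification theorem (\cite[Theorem 3.6]{benoistclausura}): a group dividing a strictly convex open set that is not Zariski-dense must preserve an ellipsoid. Your route attempts to bypass that citation by reducing to the rank-one classification, but that reduction does not follow from the entropy argument alone; in effect it would require reproving a nontrivial part of Benoist's theorem.
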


\begin{proof} For a given $\rho$, we know that $\D_\rho^\t$ is a convex set whose boundary contains $\varphi^u$ and $\varphi^s$. This implies the inequality.

If equality holds then Proposition \ref{prop:convexoreps} implies
that $\rho$ is arithmetic in $\frak a_\t$, hence it is not Zariski-dense. Benoist's Theorem \cite[Theorem 3.6]{benoistclausura}
implies that $\Om$ is an ellipsoid.
\end{proof}

\begin{figure}[ht]\begin{center}
\input{drhotheta.pstex_t}
\caption{\small{The set $\D^\t_\rho$ for a Benoist representation.}}\label{figure-Drhotheta}
\end{center}\end{figure}

Notice that $(n-1)\vo\varphi= \frac{\varphi^u + \varphi^s}{2},$ hence we obtain:

$$ h^{\vo\varphi}_\rho \leq n-1.$$

We end this section by observing that for $n+1=3$ one has $\frak a_\t =\frak a$ and $\D_\rho^\t= \D_\rho.$ Moreover, since $a_1+a_2+a_3=0$ one has $\varphi^u=\sigma_1$ and $\varphi^s=\sigma_2.$ Hence Theorem B is proved for $\Hitchin(\E,3)$.

\section{Theorem D: Regularity of the Frenet curve}\label{section:regularidad}

This section is devoted to the proof of Theorem D which states that if the Frenet equivariant curve $\z_1$ of a Hitchin representation $\rho$ is $\clase^\infty$, then $\rho$ is Fuchsian. 

We divide the proof in two steps: Proposition \ref{regular} states that if $\z_1$ is of class $\clase^\infty$ and $\rho$ belongs to a certain neighborhood of the Fuchsian locus then it is Fuchsian; the proof is completed with Proposition \ref{prop:local} which proves that if $\z_1$ is of class $\clase^\infty$ then necessarily $\rho$ belongs to this open set. 

In both cases, the proof uses the regularity to show that a certain Anosov flow preserves a volume form via a theorem of Ghys \cite{ghys}. Hence, Theorem \ref{teo:volumen} applies and one obtains relations between the eigenvalues of a given element. This idea is reminiscent of Benoist \cite[Section6.2]{convexes1}.

Recall that $U_i=\{\rho\in\Hitchin(\E,d): \cone_\rho\cap\ker \eps_i=\{0\}\}$ and $U = \bigcap_{i\neq(d+1)/2} U_i.$

\begin{prop}\label{regular}
Let $\rho$ be a Hitchin representation in the open set $U$. Assume moreover that $\z_1$ is of class
$\clase^\infty,$ then $\rho$ is Fuchsian.
\end{prop}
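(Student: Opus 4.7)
The plan is to use $\clase^\infty$-regularity of $\z_1$ to upgrade the Anosov flows $\phi^i$ of Theorem C to $\clase^\infty$ flows with $\clase^\infty$ weak foliations, invoke Ghys' classification \cite{ghys} to force each $\phi^i$ to preserve a smooth volume, and translate the resulting Liv\v sic-cohomology identity into rigid eigenvalue relations that pin $\rho$ to the Fuchsian locus. First, by Labourie's observation (subsection \ref{subsect:furthercons}), smoothness of $\z_1$ together with the Frenet condition give $\z_k = \z_1 \oplus \z_1' \oplus \cdots \oplus \z_1^{(k-1)}$, so every flag component $\z_k$ is $\clase^\infty$ on $\bord\pi_1\E$ (in the smooth structure coming from any chosen hyperbolization). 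Hence $\ell_i(x,y) = \z_i(x)\cap\z_{d-i+1}(y)$ and the distributions $E^u_i = \hom(\ell_i,\ell_{i-1})$, $E^s_i = \hom(\ell_i,\ell_{i+1})$ are $\clase^\infty$ in $(x,y)\in\bord^2\pi_1\E$; $\Lim^i_\rho$ is a $\clase^\infty$ submanifold of $\P(\R^d)$; and for $\rho\in U$ the quotient ${\sf{F}}^i_\rho$ is a closed $\clase^\infty$ $3$-manifold on which $\phi^i$ is a $\clase^\infty$ Anosov flow. Since the flow direction is $\clase^\infty$ (because $\phi^i$ acts by fiberwise scaling), both weak foliations of $\phi^i$ are $\clase^\infty$.

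Ghys' classification \cite{ghys} of $\clase^\infty$ Anosov flows on closed $3$-manifolds with $\clase^\infty$ weak stable foliation then asserts that, up to finite covers and a constant time change, $\phi^i$ is smoothly conjugate either to the suspension of a hyperbolic toral automorphism or to the geodesic flow on a closed hyperbolic surface. The orbit equivalence of Theorem C identifies ${\sf{F}}^i_\rho$ with $\sf{U}\pi_1\E \cong T^1\E$, which is not a $\mathbb{T}^2$-bundle over $S^1$ for genus $\geq 2$ (its fundamental group has exponential growth), ruling out the suspension case. Hence $\phi^i$ preserves a smooth volume form, so Theorem \ref{teo:volumen} gives that $\lambda^u$ and $\lambda^s$ are Liv\v sic-cohomologous, and Proposition \ref{prop:anosovflow} applied to $\phi^i$ and to its time reversal yields
$$\sigma_{i-1}(\lambda(\rho\g)) = \sigma_i(\lambda(\rho\g)), \qquad \g \in \pi_1\E,$$
for every $i \in \{2,\ldots,d-1\}\setminus\{(d+1)/2\}$.

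When $d$ is even these relations directly give $\sigma_1 = \cdots = \sigma_{d-1}$ on $\rho(\pi_1\E)$, so the limit cone $\cone_\rho$ lies on the ray $\R_+\cdot(d-1,d-3,\ldots,1-d)$ and the argument at the end of the proof of Theorem A (rank-$1$ Zariski closure, identification with $\PSL(2,\R)$, connectedness through purely loxodromic elements) forces $\rho$ to be Fuchsian. When $d=2k-1$ is odd only $\sigma_1=\cdots=\sigma_{k-1}$ and $\sigma_k=\cdots=\sigma_{d-1}$ follow directly, and bridging the gap $\sigma_{k-1}=\sigma_k$ is the step I expect to be the main obstacle: it should follow by combining these identities with Theorem B, the strict convexity of $\D_\rho\cap\frak a^*_{G_\rho}$ granted by Benoist's theorem (Remark \ref{obs:non-arithmetic} and Proposition \ref{prop:convexoreps}), and the sharply restricted list of rank-$2$ Zariski closures of Hitchin representations in Guichard's classification \cite{clausura}.
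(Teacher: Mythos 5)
Your proof follows the same overall strategy as the paper's: smoothness of $\z_1$ propagates (via $\z_j=\z_1\oplus\cdots\oplus\z_1^{(j-1)}$) to the manifolds $\Lim^i_\rho$ and the splittings $E^u_i\oplus E^s_i$, a result of Ghys forces the $\clase^\infty$ Anosov flow $\phi^i$ on the closed $3$-manifold $\sf F^i_\rho$ to preserve a volume, and Theorem \ref{teo:volumen} together with Proposition \ref{prop:anosovflow} then gives $\sigma_{i-1}(\lambda(\rho\g))=\sigma_i(\lambda(\rho\g))$, which pins $\frak a_{G_\rho}$ to $\frak a_{\tau_d(\PSL(2,\R))}$. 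One difference is that the paper only needs Ghys's \cite[Lemme 3.3]{ghys}, which asserts directly that a smooth Anosov flow on a closed $3$-manifold with sufficiently regular weak distributions preserves a contact (hence volume) form. You instead invoke the full Ghys classification and then rule out the suspension case topologically; this is correct but a detour, since the classification is a much stronger theorem built on the lemma the paper uses.

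You have also correctly observed the genuine subtlety: when $d=2k-1$ the flow $\phi^k$ is excluded, so the volume argument only gives $\sigma_1=\cdots=\sigma_{k-1}$ and $\sigma_k=\cdots=\sigma_{d-1}$ on $\lambda(\rho(\pi_1\E))$, leaving the bridge $\sigma_{k-1}=\sigma_k$ open. (The paper's write-up states the conclusion for all $i\in\{2,\ldots,d-1\}$ without flagging the excluded index.) However, your proposed patches are not worked out and, as sketched, do not clearly close the gap: under the relations already established the restrictions $\sigma_1|\frak a_{G_\rho},\dots,\sigma_{k-1}|\frak a_{G_\rho}$ collapse to one linear form, and $\sigma_k|\frak a_{G_\rho},\dots,\sigma_{d-1}|\frak a_{G_\rho}$ to another, so Theorem B plus strict convexity of $\D_\rho\cap\frak a^*_{G_\rho}$ only produce two boundary points of a strictly convex planar region, which is not a contradiction. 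A clean closure uses the same smoothness hypothesis once more via the convex Anosov representations $\L^i\rho$ (this is precisely the argument of Proposition \ref{prop:local}): Ghys's lemma applied to the geodesic flow of $\L^i\rho$ gives $\sigma_i(\lambda(\rho\g))=\sigma_{d-i}(\lambda(\rho\g))$ for $i\neq k$, and telescoping together with $\sum_j\eps_j=0$ gives $\eps_j(\lambda(\rho\g))=-\eps_{d-j+1}(\lambda(\rho\g))$, hence $\eps_k(\lambda(\rho\g))=0$ when $d=2k-1$. Combined with $\sigma_1=\cdots=\sigma_{k-1}$ this forces $\lambda_j(\rho\g)=(k-j)\lambda_{k-1}(\rho\g)$ for all $j$, i.e.\ $\frak a_{G_\rho}=\frak a_{\tau_d(\PSL(2,\R))}$. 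You should replace the conjectural last step with a complete argument of this kind.
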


\begin{proof} Since $\z_1$ is $\clase^\infty,$ one has that
\begin{equation}\label{equation:derivadas} \z_i=\z_1\oplus\z_1'\oplus\cdots\oplus \z_1^{(i-1)}\end{equation} (Labourie
\cite{labourie}). The map $\z_i$ is thus $\clase^\infty$ and
therefore the manifold $\Lim^i_\rho$ is $\clase^\infty$.

Moreover, from the formula of the bundles $E^u$ and $E^s$ we
deduce that they are smooth bundles too. Applying a
result of Ghys \cite[Lemme 3.3]{ghys}\footnote[2]{The result of Ghys only requires $\clase^2$-regularity of the bundles (see \cite[Section 6]{ghys})
to provide a volume (contact) form invariant by the flow. This
allows to reduce the required regularity for the rigidity. Nevertheless, we do not know if this reduction is optimal.} we deduce that the flow $\phi^i$ preserves a volume form and hence $\lambda^u$ and $\lambda^s$ are Liv\v sic-cohomologous (Theorem \ref{teo:volumen}).

One concludes that for all $\g\in\pi_1\E$ and $i\in\{2,\ldots,d-1\}$ one has $\sigma_{i-1}(\lambda(\rho\g))=\sigma_i(\lambda(\rho\g))$. This implies that $\frak a_{G_\rho}=\frak a_{\tau_d(\PSL(2,\R))},$ hence $\rho$ is Fuchsian.
\end{proof}

\begin{prop}\label{prop:local} Let $\rho\in\Hitchin(\E,d)$ be such that $\z_1$ is of class $\clase^\infty.$ Then for all $i\in\{1,\ldots, d\}$ with $i\neq(d+1)/2$ one has $\cone_\rho\cap\ker\eps_i=\{0\}.$
\end{prop}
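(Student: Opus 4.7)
The plan is to differentiate the equivariance identity $\rho\gamma \cdot \z_1(x) = \z_1(\gamma x)$ iteratively at the attracting fixed point $\gamma_+$ of a given $\gamma \in \pi_1\E$ and, via the Frenet formula $\z_k = \z_1 \oplus \z_1' \oplus \cdots \oplus \z_1^{(k-1)}$ from equation (\ref{equation:derivadas}), to pin down the eigenvalues of $\rho\gamma$ as an arithmetic progression. This forces every Jordan projection of $\rho$ to lie on the Fuchsian ray, and the cone condition then follows immediately.

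Concretely, fix a non-trivial $\gamma \in \pi_1\E$ and equip $\bord\pi_1\E$ with the smooth structure induced by a hyperbolization of $\E$, so that $\gamma$ acts smoothly near $\gamma_+$ with derivative $\alpha := \gamma'(\gamma_+) \in (0,1)$; set $\mu := \log\alpha < 0$. Pick a smooth local lift $\tilde\z_1 : U \to \R^d \setminus \{0\}$ of $\z_1$; the equivariance gives a smooth scalar $h$ with
\[\rho\gamma \cdot \tilde\z_1(x) = h(x)\, \tilde\z_1(\gamma x)\]
and $h(\gamma_+) = \pm e^{\lambda_1(\rho\gamma)}$. Differentiating $k$ times at $\gamma_+$ by Leibniz and Fa\`a di Bruno and reducing modulo $\z_k(\gamma_+) = \z_1(\gamma_+) \oplus \tilde\z_1'(\gamma_+) \oplus \cdots \oplus \tilde\z_1^{(k-1)}(\gamma_+)$, every cross-term falls into $\z_k(\gamma_+)$ and only the leading term survives:
\[\rho\gamma \cdot \tilde\z_1^{(k)}(\gamma_+) \equiv h(\gamma_+)\,\alpha^k\,\tilde\z_1^{(k)}(\gamma_+) \pmod{\z_k(\gamma_+)}.\]
Since $\tilde\z_1^{(k)}(\gamma_+)$ generates $\z_{k+1}(\gamma_+)/\z_k(\gamma_+)$ by the Frenet property, and $\rho\gamma$ acts on this quotient with eigenvalue of modulus $e^{\lambda_{k+1}(\rho\gamma)}$, comparing moduli yields
\[\lambda_{k+1}(\rho\gamma) = \lambda_1(\rho\gamma) + k\mu, \qquad k = 0, 1, \ldots, d-1.\]

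Summing over $k$ and using the trace-zero condition $\sum_i \lambda_i(\rho\gamma) = 0$ forces $\lambda_1(\rho\gamma) = -(d-1)\mu/2$, whence $\lambda_i(\rho\gamma) = \tfrac{-\mu}{2}(d - 2i + 1)$ for $i = 1, \ldots, d$. Thus $\lambda(\rho\gamma)$ is a positive multiple of the Fuchsian direction $(d-1, d-3, \ldots, 3-d, 1-d)$ for every non-trivial $\gamma$, so $\cone_\rho \subseteq \R_+ \cdot (d-1, d-3, \ldots, 1-d)$. Since the $i$-th coordinate $d-2i+1$ of this direction vanishes precisely when $i = (d+1)/2$, one concludes $\cone_\rho \cap \ker\eps_i = \{0\}$ for every $i \neq (d+1)/2$, as claimed.

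The main obstacle is the inductive verification of the displayed congruence: the Leibniz expansion of $[h(x)\tilde\z_1(\gamma x)]^{(k)}$ and the Fa\`a di Bruno expansion of $[\tilde\z_1(\gamma x)]^{(k)}$ produce many cross-terms, but each of them involves $\tilde\z_1^{(l)}(\gamma_+)$ for some $l < k$ and hence lies in $\z_k(\gamma_+)$ by the Frenet formula; the bookkeeping is routine, closing the induction on $k$. Note that this approach in fact yields the stronger conclusion that $\rho$ is Fuchsian, giving an alternative route to Theorem D that sidesteps the Ghys and Sinai--Ruelle--Bowen machinery invoked in Proposition \ref{regular}.
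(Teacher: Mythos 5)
Your argument is correct and takes a genuinely different, more elementary route than the paper. The paper's proof applies the exterior-power representations $\L^i\rho$, shows their geodesic flows become $\clase^\infty$ Anosov flows with smooth stable and unstable distributions, and then invokes Ghys's invariant-volume theorem together with the Sinai--Ruelle--Bowen criterion (Theorem \ref{teo:volumen}) to obtain $\sigma_i(\lambda(\rho\g))=\sigma_i\circ\ii(\lambda(\rho\g))$, hence $\eps_j(\lambda(\rho\g))=-\eps_{d-j}(\lambda(\rho\g))$; the Proposition then follows since $\cone_\rho\subset\inte\frak a^+$. You instead differentiate the equivariance relation $\rho\g\cdot\tilde\z_1(x)=h(x)\tilde\z_1(\g x)$ at the attracting fixed point, and via the Frenet formula (\ref{equation:derivadas}) read off the eigenvalue of $\rho\g$ on $\z_{k+1}(\g_+)/\z_k(\g_+)$ as $h(\g_+)\alpha^k$; since $\z_k(\g_+)=\ell_1(\g_+,\g_-)\oplus\cdots\oplus\ell_k(\g_+,\g_-)$ is the span of the top $k$ eigenlines, this equals $e^{\lambda_{k+1}(\rho\g)}$ in modulus, and the resulting arithmetic progression together with $\tr=0$ forces $\lambda(\rho\g)$ onto the Fuchsian ray. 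This is pure calculus, sidesteps all the thermodynamical machinery, and as you observe yields Theorem D outright rather than merely the cone condition, so it also makes Proposition \ref{regular} superfluous. On the other hand the paper's strategy fits uniformly into the framework it builds elsewhere (Anosov flows, SRB, Liv\v{s}ic cohomology) and needs slightly less regularity: Ghys's lemma only requires $\clase^2$ bundles, whereas your argument differentiates $\z_1$ roughly $d$ times.

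One small point worth tightening: you compute with the smooth structure on $\bord\pi_1\E$ coming from a hyperbolization, but the hypothesis ``$\z_1$ is $\clase^\infty$'' should be read as ``the image $\z_1(\bord\pi_1\E)$ is a $\clase^\infty$ submanifold of $\P(\R^d)$'', so the natural structure to use is the one pulled back through $\z_1$; a priori $\z_1$ is only H\"older for the hyperbolization structure. The fix is cosmetic: the argument never uses the specific value of $\alpha=\g'(\g_+)$, only that $\alpha\in(0,1)$. With the pulled-back structure $\alpha$ is the eigenvalue of $d(\rho\g)$ on the tangent line $\hom(\z_1(\g_+),\z_2(\g_+)/\z_1(\g_+))$, hence $\alpha=e^{\lambda_2(\rho\g)-\lambda_1(\rho\g)}\in(0,1)$, and the same computation closes.
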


\begin{proof} Consider $2\leq i <(d+1)/2$ and consider the projective Anosov representation given by $\L^i\rho:\pi_1\E\to\PSL(\L^i\R^d).$ Its equivariant maps are given by $\xi=\L^i\z_i:\bord\pi_1\E\to \P(\L^i\R^d)$ and $\xi^*=\L^{d-i}\z_{d-i}:\bord\pi_1\E\to \P((\L^i\R^d)^*)$ (recall $\L^{d-i}\R^d$ is canonically isomorphic to $(\L^i\R^d)^*$).

Equation (\ref{equation:derivadas}) implies that $\xi(x)=\R( v_1\wedge\cdots\wedge v_i)$ where $v_j\in \z_1^{(j)}(x).$ Since $\z_1$ is of class $\clase^\infty$ we can compute $\xi'$ and one obtains (applying the product rule and observing that all terms but one have repetitions) $$\xi'(x)=\R( v_1\wedge\cdots\wedge v_{i-1}\wedge v_{i+1}).$$ Consequently, by Remark \ref{obs:TP} the tangent space $$T_{\xi(x)}\xi(\bord\pi_1\E)=\hom(\xi(x),\xi'(x)).$$ The geodesic flow of $\rho$ (recall Theorem \ref{teo:flujo1}) is a $\clase^\infty$ Anosov flow with $\clase^\infty$ distributions, namely $$E^u(x,y,(\varphi,v))=\hom(\xi(y),\xi'(y))\textrm{ and }E^s(x,y,(\varphi,v))=\hom(\xi^*(x),(\xi^*)'(x)).$$ A computation analogous to that of Proposition \ref{prop:anosovflow} gives $$\int_{[\g]}\lambda^u=\sigma_i(\lambda(\rho\g))\textrm{ and }\int_{[\g]}\lambda^s=-\sigma_{d-i}(\lambda(\rho\g))=\sigma_i\circ\ii(\lambda(\rho\g)).$$

Since the distributions are smooth, Ghys's result \cite[Lemme 3.3]{ghys} implies that the geodesic flow preserves a volume form and hence $\lambda^u$ and $\lambda^s$ are Liv\v sic-cohomologous, this implies that for all $\g\in\pi_1\E$ and $i\neq(d+1)/2$ one has $$\sigma_i(\lambda(\rho\g))=\sigma_i\circ\ii(\lambda(\rho\g)),$$ hence for all $j\in\{1,\ldots,d\}$ one has $\eps_j(\lambda(\rho\g))= -\eps_{d-j}(\lambda(\rho\g)).$

Since $\cone_\rho\subset \inte\frak a^+$ (Proposition \ref{prop:interior}) one deduces that $\cone_\rho\cap\ker\eps_i=\{0\}$ for all $i\neq(d+1)/2.$

\end{proof}

\bibliography{stage1}
\bibliographystyle{plain}

\author{$\ $\\ Rafael Potrie\\ CMAT Facultad de Ciencias\\ Universidad de la Rep\'ublica\\ Igu\'a 4225 Montevideo Uruguay\\
\texttt{rpotrie@cmat.edu.uy}}

\author{$\ $ \\
Andr\'es Sambarino\\
  IMJ-PRG (CNRS UMR 7586)\\ Universit\'e Pierre et Marie Curie (Paris VI) \\
  4 place Jussieu 75005 Paris France,\\
  \texttt{andres.sambarino@gmail.com}}

\end{document}